\documentclass[11pt]{amsart}



\setlength{\oddsidemargin}{0.25in}	
\setlength{\evensidemargin}{0.25in}	
\setlength{\topmargin}{0.0in}		
\setlength{\textwidth}{6.0in}		
\setlength{\textheight}{9in}		
\addtolength{\topmargin}{-\headheight}	
\addtolength{\topmargin}{-\headsep}	




\usepackage{amssymb} 
\usepackage{amsmath,amsthm}
\usepackage{graphicx}
\usepackage{tikz}

\usepackage{caption}
\usepackage{subcaption}

\theoremstyle{plain}
\newtheorem{thm}{Theorem}[section]
\newtheorem{lem}[thm]{Lemma}

\theoremstyle{remark}

\numberwithin{equation}{section}


\begin{document}

\title[Self-similar solutions to the MCF in the Minkowski plane]{Self-similar solutions to the mean curvature flow in the Minkowski plane $\mathbf R^{1,1}$}

\author{Hoeskuldur P. Halldorsson}
\address{MIT, Department of Mathematics, 77 Massachusetts Avenue, Cambridge, MA 02139-4307.}
\email{hph@math.mit.edu}

\subjclass[2010]{Primary 53C44. Secondary 53B30}

\date{} 


\begin{abstract}
We introduce the mean curvature flow of curves in the Minkowski plane $\mathbf R^{1,1}$ and give a classification of all the self-similar solutions. In addition, we describe five other exact solutions to the flow.
\end{abstract}

\maketitle

\section{Introduction}
Minkowski space $\mathbf R ^{n,1}$ is the linear space $\mathbf R^{n+1}$ endowed with the Minkowski metric
\begin{equation*}
\langle \,, \rangle = dx_1^2 + \cdots + dx_n^2 - dx_{n+1}^2.
\end{equation*}
The mean curvature flow (MCF) of immersed hypersurfaces in Minkowski space is defined as follows: Let $M^n$ be an $n$-dimensional manifold and consider a family of smooth immersions $X_t = X(\cdot,t) : M^n \rightarrow \mathbf R ^{n,1}$ for $t \in I$. Write $M_t = X_t(M^n)$. The family of hypersurfaces $(M_t)_{t\in I}$ is said to \emph{evolve by mean curvature} if
\begin{equation*}
\frac{\partial X}{\partial t}(p,t) = \mathbf H(p,t)
\end{equation*}
for $p \in M^n$ and $t \in I$. Here $\mathbf H$ is the mean curvature vector of $M_t$, which is normal to the surface and satisfies $\langle \mathbf H,\mathbf n \rangle = -\text{div}_{M_t}\mathbf n$ where $\mathbf n$ is a unit normal field. This flow has been studied in \cite{aar06,ding12,eck93,eck97,eck03,eckhuisk91,huiskyau96,jian06,jianjuliu10}.

A natural question to ask is whether there exist any self-similar solutions to the flow, i.e., hypersurfaces which move under a combination of dilations and isometries of the Minkowski space. The most basic example is the hyperboloid $x_{n+1}^2 = x_1^2 + \cdots + x_n^2 + 2nt$, which is a time-like contracting solution for $t<0$, and a space-like expanding solution for $t>0$. There are also examples of space-like hypersurfaces translating with constant speed along the $x_{n+1}$-axis, both rotationally symmetric in \cite{eck97,jian06}, and more general in \cite{ding12}. Of course, we also have the trivial examples of space-like maximal and time-like minimal surfaces, which are not affected by the flow due to their vanishing mean curvature.

In this paper, we consider the case $n=1$, the MCF of curves in the Minkowski plane $\mathbf R^{1,1}$. Our main result is the following classification of all self-similar solutions to the flow. More details are provided in later sections of the paper.
\begin{thm} Up to rescalings and isometries of the Minkowski plane, the following list contains all self-similar solutions to the mean curvature flow of space-like curves in the Minkowski plane:
\begin{itemize}
\item Translation: Three curves,
\begin{equation*}
\cosh x = e^{y-t}, \quad \sinh y = e^{t-x}, \quad x+y = e^{x-y}+t,
\end{equation*}
which translate along the $y$-axis, $x$-axis and the line $y=x$, respectively. See Figure \ref{transi}.
\item Expansion: Six types of curves, including the expanding hyperbola
\begin{equation*}
y^2 = x^2 + 2t, \quad t>0.
\end{equation*}
See Figures \ref{expansib}, \ref{expansic}.
\item Contraction: One type of curves. See Figure \ref{contrasib}.
\item Hyperbolic rotation: Three types of curves. See Figure \ref{rotib}.
\item Hyperbolic rotation and expansion: Eighteen types of curves, including the exact solutions
\begin{equation*}
x+y = 2t \tan(x-y), \quad x+y =-2t \coth(x-y), \quad t>0.
\end{equation*}
See Figures \ref{ut}, \ref{rotexpansi1b},  \ref{rotexpansi1c},  \ref{rotexpansi2b}, \ref{rotexpansi2c}. 
\item Hyperbolic rotation and contraction: Seven types of cures, including the exact solution
\begin{equation*}
x+y = -2t\tanh(x-y), \quad t < 0.
\end{equation*}
See Figures \ref{inn}, \ref{rotcontrasi1b}, \ref{rotcontrasi2b}.
\item Hyperbolic rotation, expansion and translation: Five types of curves. See Figure \ref{allir1}.
\item Hyperbolic rotation, contraction and translation: Three types of curves. See Figure \ref{allir2}.
\end{itemize}
Reflecting these curves across the line $y=x$ gives all time-like self-similar solutions, with the direction of $t$ reversed.
\end{thm}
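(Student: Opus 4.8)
\section*{Proof proposal}

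The plan is to reduce the classification to a phase-plane analysis of a single autonomous ODE. First I would characterize self-similar solutions by their generating vector field. A family $M_t$ is self-similar precisely when it moves under the one-parameter subgroup of (dilations)$\,\times\,$(isometries) of $\mathbf R^{1,1}$ generated by a fixed vector field. Since the isometry group is generated by translations and hyperbolic rotations $R=\left(\begin{smallmatrix}0&1\\1&0\end{smallmatrix}\right)$, adjoining dilations gives the generator $V(X)=\alpha X+\beta RX+\mathbf c$, where $\alpha$ is the dilation rate, $\beta$ the rotation rate, and $\mathbf c\in\mathbf R^{1,1}$ the translation. Writing the flow as $X(\cdot,t)=g_t\bigl(X(\cdot,0)\bigr)$ and differentiating, the equation $\partial_t X=\mathbf H$ forces the normal part of $V$ to equal $\mathbf H$, so that (up to the sign conventions fixed earlier) a space-like curve is a self-similar solution if and only if $\kappa=\langle V,\mathbf n\rangle$. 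Establishing this equivalence is the first step.

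Next I would reduce this to an ODE by parametrizing a space-like curve through its hyperbolic tangent angle: write $T=(\cosh\theta,\sinh\theta)$ and $\mathbf n=(\sinh\theta,\cosh\theta)$, so that $\kappa=d\theta/ds$ and $\theta$ serves as a parameter away from inflection points. Introducing the support quantities $p=\langle X,\mathbf n\rangle$ and $q=\langle X,T\rangle$, the Frenet relations give $p'=q$ and $q'=1/\kappa+p$ (primes denoting $d/d\theta$), while $\langle RX,\mathbf n\rangle=-q$ and $\langle\mathbf c,\mathbf n\rangle=c_1\sinh\theta-c_2\cosh\theta$. Substituting these into $\kappa=\langle V,\mathbf n\rangle$ turns the self-similar condition into an autonomous second-order ODE for $p(\theta)$ (the translation term being manageable because $\sinh\theta,\cosh\theta$ themselves solve a linear ODE), which I expect to admit a conserved energy. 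The qualitative type of the curve is then read off from the phase portrait in the $(p,q)$-plane: equilibria produce the explicit exact solutions listed in the statement, closed orbits give the periodic curves, and unbounded orbits give the non-compact examples.

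The eight families then arise from normalizing the parameters $(\alpha,\beta,\mathbf c)$ under the residual symmetries. Rescaling normalizes $\abs{\alpha}\in\{0,1\}$; boosts and translations normalize $\mathbf c$ and move the fixed point of the affine map $X\mapsto\alpha X+\beta RX+\mathbf c$. The eight bullet items correspond exactly to the combinations of dilation, rotation, and translation being present or absent, with the sign of $\alpha$ separating expansion from contraction. The one genuinely Minkowskian subtlety is that $\alpha I+\beta R$ has eigenvalues $\alpha\pm\beta$ with the null lines $y=\pm x$ as eigenvectors; when $\alpha=\pm\beta$ this map is singular along a null direction, so a translation in that direction cannot be absorbed into a shift of the fixed point. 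This degeneracy is what produces the separate ``rotation $+$ expansion $+$ translation'' and ``rotation $+$ contraction $+$ translation'' families, so I would treat the cases $\alpha\neq\pm\beta$ and $\alpha=\pm\beta$ separately in the reduction.

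The conceptual content is contained in these reductions; the real labor, and the main obstacle, is the case-by-case phase-plane analysis needed to recover the exact counts $(3,6,1,3,18,7,5,3)$ and the closed-form solutions such as $x+y=2t\tan(x-y)$. Each case requires locating equilibria, deciding which orbits are closed versus unbounded, checking completeness of the resulting curves, and verifying that distinct orbits remain inequivalent under the residual symmetries; the large count in the rotation-plus-dilation cases stems from the many sign and magnitude regimes of $(\alpha,\beta)$ together with the position of orbits relative to the separatrices. Finally, the time-like statement follows from the reflection $(x,y)\mapsto(y,x)$, which interchanges the space-like and time-like directions and reverses the sign of $t$.
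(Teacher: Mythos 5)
Your overall architecture matches the paper's: characterize self-similar solutions by the curve equation $k=\langle V,N\rangle$ with $V(X)=\alpha X+\beta RX+\mathbf c$ (the paper's Equation \eqref{AdalJafna}, with $R$ acting as multiplication by $h$), normalize $(\alpha,\beta,\mathbf c)$ under scalings and isometries, note that the translation can be absorbed exactly when $\alpha^2\neq\beta^2$ because $\alpha+h\beta$ is a zero divisor otherwise, and then do a case-by-case phase-plane analysis. However, your chosen reduction has a genuine gap: you parametrize by the hyperbolic tangent angle $\theta$ and work with the support quantities $p(\theta),q(\theta)$, whose evolution involves $1/\kappa$. This parametrization is only valid where $\kappa\neq 0$, and the classification you are trying to prove contains several families of \emph{non-convex} curves: one of the three pure-rotation types, types in the rotation-plus-contraction case, and the $A>1$ curves in the rotation-dilation-translation case all have an inflection point where $k$ passes through $0$ (and $\theta$ fails to be monotone). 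Your $(p,q)$-phase plane is singular exactly there, so orbits cannot be continued through these points and a classification read off from that phase portrait would miss or miscount these types --- fatal for a theorem whose content includes the exact counts $(3,6,1,3,18,7,5,3)$. The paper avoids this by using the arc-length system $\tau'=1+k\nu$, $\nu'=k\tau$ with $k=a\tau-b\nu$, which is regular at $k=0$, together with a lemma (existence and uniqueness of a curve realizing any prescribed relation $k=\Phi(\tau,\nu)$) that makes the trajectory-to-curve correspondence exact, and a separate lemma that all solutions are entire graphs over the $x$-axis, which settles completeness and maximality.

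Two further points would need repair even for the convex types. First, your expectation that ``closed orbits give the periodic curves'' is off target: none of the phase portraits here have closed orbits (the fixed points are saddles, and many trajectories blow up in finite arc length, producing Minkowski-incomplete ends); there are no periodic self-similar curves in this classification. Second, when $\mathbf c\neq 0$ and $\alpha=\pm\beta$ --- precisely the degenerate case you correctly identify as irreducible --- the equation $\kappa=\alpha p-\beta q+c_1\sinh\theta-c_2\cosh\theta$ is \emph{not} autonomous in $\theta$, contrary to your claim; the explicit $\theta$-dependence cannot be scaled away, so your conserved-energy strategy does not apply as stated. The paper instead handles this case with the graph ODE $\xi''(\eta)=\xi'(\eta)(2\xi(\eta)-\xi'(\eta))$, which admits the first integral $e^{\xi}(\tfrac12\xi'-\xi+1)=A$ and is then solved by quadrature, the value of $A$ determining the five expanding and three contracting types.
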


The corresponding classification in the Euclidian plane was previously done by the author in \cite{hph1}. There are a few notable differences between the flows in the two planes. In the Euclidean plane, the flow reduces the length of any simple closed curve, hence it is usually called the curve shortening flow. In the Minkowski plane, however, the flow is not defined for simple closed curves, since the curvature blows up at light-like points. It is only defined for space-like and time-like curves but these can have finite Minkowski-length (without having endpoints). In this paper, we have examples of both space-like curves that decrease in length and that increase in length under the flow, so the flow is neither purely shortening nor lengthening. This is illustrated in Figure \ref{lengdarhasar}.

Many of the curves in this paper have Minkowski-finite ends where the curvature blows up. Therefore, the maximum principle does not apply to them. In fact, we have examples of curves that are initially disjoint but then intersect under the flow. We also have examples of non-uniqueness of the flow, i.e., different solutions starting at the same curve. In comparison, all the curves which arose in the classification in the Euclidean plane have bounded curvature and hence do not show this behaviour.

For various reasons, the classification problem is simpler in the Euclidean plane than in the Minkowski plane. In each plane, the problem of finding curves which move under a (hyperbolic) rotation and a dilation can be reduced to the study of a two-dimensional system of ODEs, such that each curve corresponds to a trajectory in the phase plane of the system. 
In the Minkowski plane, when the dilation is an expansion, the system has two saddle points. As a result, there are many different types of trajectories in the phase plane and hence many different types of curves. In the Euclidean plane, however, the system only has a sink and a source (when the dilation is a contraction), which does not result in as many different types of trajectories. Also, in the Minkowski plane, the self-similar solutions consisting of a hyperbolic rotation and a dilation show very different behaviour depending on whether $a^2<b^2$, $a^2 = b^2$ or $a^2 > b^2$, the constants $a$ and $b$ denoting the initial speeds of the hyperbolic rotation and the dilation, respectively. No trichotomy like this is present in the Euclidean plane.

Another reason for the simpler classification in the Euclidean plane is that
any self-similar solution to the flow consisting of translation and either rotation, dilation or both, can actually be described without the translation by moving the origin to a different location. In the Minkowski plane, this simplification can only be made in the case $a^2 \neq b^2$. As a result, the classification contains self-similar solutions combining all three motions, i.e., hyperbolic rotation, dilation and translation. 

The paper is structured as follows. Section \ref{hyperbolic} covers basic properties of the Minkowski plane and introduces the hyperbolic numbers and the diagonal basis, both of which are well suited for making calculations in the Minkowski plane. In Section \ref{curves} we discuss curves in the Minkowski plane, derive the Frenet formulas, and show how the MCF is equivalent to a few different PDEs. Section \ref{selfsim} is devoted to self-similar solutions. First we find all possible self-similar motions that can arise as solutions to the flow and derive the equation the corresponding curves must satisfy. We then rewrite the curve equation as two separate second order ODEs and as a two-dimensional system of ODEs.  These three forms make it easier to find and describe the curves, which we do in Sections \ref{translating} through \ref{screwdilatingtranslating}, one self-similar motion at  a time.

In Section \ref{other} we derive five exact solutions that are not self-similar; namely,
\begin{equation*}
\begin{aligned}
&\cosh x = e^{-t} \cosh y, \quad t >0, \\
&\cosh x = e^{-t} \sinh y, \quad t \in \mathbf R, \\
&\sinh x = e^{-t} \sinh y, \quad t<0,\\
&\sin y = e^{-t} \sin x, \quad t >0, \\
&\tanh (x+y) = \tan (x-y) \tan 2t, \quad 0<t<\tfrac{\pi}{4},
\end{aligned}
\end{equation*}
as shown in Figures \ref{flosi} and \ref{glaenyr}.
Finally, an appendix discusses how one finds all curves in the Minkowski plane that are invariant under some self-similar motion. We include this to explain why one curve appears in two categories in the classification of the self-similar solutions.

\section{Minkowski plane and hyperbolic numbers}
\label{hyperbolic}

The Minkowski plane $\mathbf R ^{1,1}$  is just $\mathbf R ^2$ endowed with the non-degenerate bilinear symmetric form $\langle (x_1,y_1), (x_2,y_2)\rangle = x_1x_2 - y_1y_2$, which is called the \emph{Minkowski metric}. A nonzero vector $z$ is called \emph{space-like} if $\langle z, z \rangle > 0$, \emph{time-like} if $\langle z, z \rangle < 0$, and \emph{light-like} if $\langle z, z \rangle = 0$. The \emph{Minkowski norm} is defined as $||z|| = \sqrt{|\langle z,z \rangle |}$, and two vectors $z_1$ and $z_2$ are said to be \emph{orthogonal} if $\langle z_1, z_2 \rangle = 0$.

In this paper, we will use the language of hyperbolic numbers (also called split-complex numbers), which are well suited for making calculations in the Minkowski plane, much like the complex numbers are useful when dealing with the Euclidean plane.
We identify the point $(x,y)$ with the \emph{hyperbolic number} $x+hy$, which is just an ordered pair of real numbers, with addition and multiplication defined as follows:
\begin{equation*}
\begin{aligned}
&(x_1+hy_1) + (x_2 + hy_2) = (x_1+x_2) +  h(y_1+y_2)\\
&(x_1+hy_1)\cdot (x_2+hy_2) = (x_1x_2+y_1y_2) +h(x_1y_2+x_2y_1).
\end{aligned}
\end{equation*}
In particular, $h^2=+1$ as opposed to $i^2=-1$ for the complex numbers.
The hyperbolic numbers form a commutative associative algebra which is isomorphic to the quotient $\mathbf R[x]/(x^2-1)$, and to the algebra of symmetric $2 \times 2$ matrices with equal diagonal elements, where 
\begin{equation*}
x+hy \leftrightarrow \left(
\begin{array}{cc}
x & y \\
y & x
\end{array}
\right).
\end{equation*}
The basic properties of the hyperbolic numbers are the same as those of the complex numbers, with a few notable exceptions.

\begin{figure}
\centering
\begin{tikzpicture}[scale=0.75]
\draw [->] (0,-4) -- (0,4);
\draw [->] (-4,0) -- (4,0);
\draw [->] (-4,-4) -- (4,4);
\draw [->] (-4,4) -- (4,-4);
\draw [fill] (0,2) circle [radius=0.04];
\draw [fill] (2,0) circle [radius=0.04];
\draw [fill] (1,1) circle [radius=0.04];
\draw [fill] (1,-1) circle [radius=0.04];
\draw [fill] (2.675, 1.776) circle [radius=0.04];
\draw [fill] (1.776,2.675) circle [radius=0.04];
\draw [fill] (-2.675, -1.776) circle [radius=0.04];
\draw [fill] (-1.776,-2.675) circle [radius=0.04];
\node [right] at (4,0) {$x$};
\node [above] at (0,4) {$y$};
\node [above right] at (4,4) {$\xi$};
\node [below right] at (4,-4) {$\eta$};
\node [below left] at (0,2) {$h$};
\node [below left] at (2,0) {$1$};
\node [above] at (1,1) {$h_+$};
\node [below] at (1,-1) {$h_-$};
\node [right] at (2.675, 1.776) {$e^{h\theta}$};
\node [above] at (1.776, 2.675) {$he^{h\theta}$};
\node [left] at (-2.675, -1.776) {-$e^{h\theta}$};
\node [below] at (-1.776, -2.675) {-$he^{h\theta}$};
\draw[domain=-3.5:3.5,smooth,variable=\t]
plot ({2*sqrt(\t*\t*0.25+1)},{\t});
\draw[domain=-3.5:3.5,smooth,variable=\t]
plot ({\t},{2*sqrt(\t*\t*0.25+1)});
\draw[domain=-3.5:3.5,smooth,variable=\t]
plot ({-\t},{-2*sqrt(\t*\t*0.25+1)});
\draw[domain=-3.5:3.5,smooth,variable=\t]
plot ({-2*sqrt(\t*\t*0.25+1)},{-\t});
\end{tikzpicture}
\caption{The hyperbolic number plane.}
\end{figure}
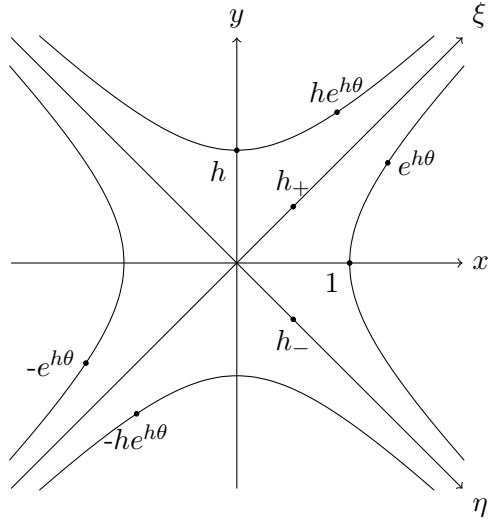

The \emph{hyperbolic conjugate} of $z = x + hy$ is $\bar z = x - hy$, and the \emph{hyperbolic modulus} is defined as $|z| = \sqrt{|z\bar z|}$. They satisfy the usual properties
\begin{equation*}
\overline{z_1+z_2} = \overline {z_1} + \overline {z_2}, \quad \overline{z_1z_2} = \overline{z_1}\,\overline{z_2}, \quad
\overline{\overline{z}} = z, \quad
|z_1z_2| = |z_1||z_2|.
\end{equation*}
Note that
\begin{equation*}
\overline{z_1}z_2 = (x_1x_2-y_1y_2) +h(x_1y_2-x_2y_1)
\end{equation*}
and if we take the real part, we recover the Minkowski metric. In particular, $\bar z z = x^2-y^2 =\langle z, z\rangle $, so the hyperbolic modulus coincides with the Minkowski norm.

Now, $h (x+hy) = y + hx$, so multiplication by $h$ corresponds to reflection across the line $y=x$. Moreover, $z$ and $hz$ are always orthogonal, just like $z$ and $iz$ in the complex plane. However, $\langle hz_1,hz_2\rangle = - \langle z_1,z_2\rangle$.

Perhaps the biggest difference between the hyperbolic numbers and the complex numbers is that the former do not form a field. The reason is that $(1+ h)(1- h) = 0$, so all points on the two lines $y=x$ and $y=-x$ are zero-divisors and hence do not have a multiplicative inverse. These are exactly the points with hyperbolic modulus zero, i.e., light-like points. For other points, the multiplicative inverse is given by $z^{-1} = \frac{\bar z}{\overline z z} = \frac{x-hy}{x^2-y^2}$.

It is often convenient to use the conjugate light-like points $h_+ = \frac{1+ h}{2}$ and $h_- = \frac{1- h}{2}$ as an alternate basis for the hyperbolic numbers, called the \emph{diagonal basis}. In this paper, we will use the notation
\begin{equation*}
x + hy = \xi h_+ + \eta h_- =: (\xi,\eta) 
\end{equation*}
where $\xi = x+y$ and $\eta = x-y$, and refer to the light-like lines $y=x$ and $y=-x$ as the $\xi$ and $\eta$-axes, respectively. Since $h_+^2 = h_+$, $h_-^2 = h_-$ and $h_+h_-=0$, the basic operations now take the simple form
\begin{equation*}
\begin{aligned}
&(\xi_1,\eta_1)+(\xi_2,\eta_2) = (\xi_1+\xi_2,\eta_1+\eta_2) \\
&(\xi_1,\eta_1)\cdot (\xi_2,\eta_2) = (\xi_1\xi_2,\eta_1\eta_2)\\
&\overline{(\xi,\eta)} =  (\eta,\xi)\\
&\langle (\xi,\eta),(\xi,\eta)\rangle = \xi\eta.\\
\end{aligned}
\end{equation*}
In particular, we see that the hyperbolic numbers are isomorphic to $\mathbf R \oplus \mathbf R$ with pairwise addition and multiplication.

The set of points with $\langle z, z \rangle =  1$ is the unit hyperbola $x^2-y^2=  1$, i.e., $\xi\eta =  1$.
Note that by plugging $h\theta$ into the power series of the exponential function, we get the following hyperbolic analogue of Euler's formula
\begin{equation*}
e^{h\theta} = \cosh \theta + h \sinh \theta = (e^\theta,e^{-\theta}).
\end{equation*}
These points lie on the right arm of the unit hyperbola and they form a one-parameter group, much like the unit circle in the complex plane. The parameter $\theta$ is called the \emph{hyperbolic angle}.

Maps of the form $z \mapsto e^{h\theta}z$ preserve the hyperbolic modulus and are called hyperbolic rotations (also known as Lorentz boosts or squeeze maps).  If we include translations and reflections across the $x$ and $y$-axes, we have the full group of isometries in the Minkowski plane.

For further background on the hyperbolic number plane we refer to \cite{cat11,sobcz95}.

\section{Curves in the Minkowski plane}
\label{curves}

\begin{figure}[t]
\centering 
\includegraphics{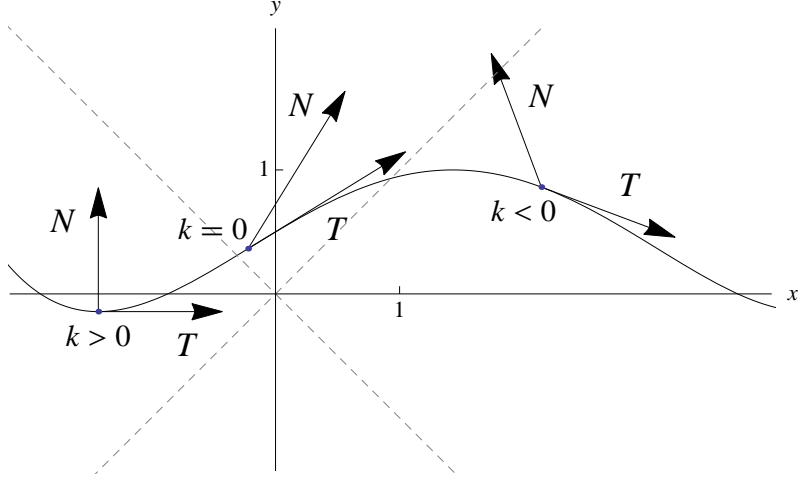} 
\caption{A space-like curve in the Minkowski plane.} 
\label{Mcurve} 
\end{figure}

Let $X:I \rightarrow \mathbf R ^{1,1}$ be a regular curve. At each point on the curve, the tangent vector $X_u$ is either space-like, light-like or time-like. The curvature blows up at points where $X_u$ is light-like, so we will only look at curves where it is everywhere space-like or time-like. Those curves are called space-like and time-like respectively and are always embedded. Note that reflection across the $\xi$-axis maps space-like curves to time-like and vice versa. 

The \emph{Minkowski arc-length} parameter is defined as
\begin{equation*}
ds = \sqrt{|\langle X_u,X_u \rangle|} du
\end{equation*}
and the \emph{unit tangent vector} is
\begin{equation*}
T = X_s = \frac{1}{ \sqrt{|\langle X_u,X_u \rangle|}}X_u.
\end{equation*}
We choose the \emph{unit normal} $N = hT$, i.e., the vector obtained by reflecting $T$ across the $\xi$-axis. Since $\langle T, T \rangle = \epsilon = \pm 1$ is constant, $T_s$ is parallel to $N$, and  we define the \emph{signed curvature} $k$ by the equation $T_s = kN$. Then we also have $N_s = kT$, so the Frenet formulas take the form
\begin{equation*}
\begin{aligned}
T_s &= k N \\
N_s &= k T.
\end{aligned}
\end{equation*}
By definition, the \emph{mean curvature vector} of $X$ is then given by $\mathbf H = \epsilon  X_{ss} = \epsilon T_s =\epsilon k N$, so we always have $\langle \mathbf H, N \rangle = -k$. Note that many authors choose $N$ such that $\{T,N\}$ is positively oriented, resulting in some different signs in the Frenet formulas above.

If we reflect a time-like curve across the $\xi$-axis, it not only becomes space-like but the mean curvature vector also changes its direction (because of the $\epsilon$ factor). This means the direction of the MCF of the curve is reversed, but besides that the flow is the same. For this reason, it suffices to look at the flow of space-like curves, even though we will use time-like curves to simplify some arguments.

We will parametrize our space-like curves such that $T$ lies in the right arm of the unit hyperbola $x^2-y^2=1$. Then $T=e^{h\theta}$, for some real number $\theta$ called the \emph{hyperbolic tangent angle}. Note that $T_s = \theta_s h e^{h \theta} = kN$, so $\theta$ satisfies $\theta_s = k$. The upper arm of the unit hyperbola $x^2-y^2 = -1$ (or its translates) is the unique space-like curve with constant curvature $k=1$. A curve such that $k>0$ (or $k<0$) everywhere is called \emph{convex}.


Besides the arc-length parametrization, every space-like curve in the Minkowski plane has two other natural parametrizations that we will use. The curve can globally be viewed as a graph where either $y$ is a function of $x$ satisfying $|y'(x)|<1$, or $\xi$ is an increasing function of $\eta$.
In the former case, the parametrization is $X(x) = x + hy(x)$ and direct calculations yield
\begin{equation}
\label{yjofnur}
T = \frac{1+hy'(x)}{(1-y'(x)^2)^{\frac 1 2}}, \quad N = \frac{y'(x) + h}{(1-y'(x)^2)^{\frac1 2}}, \quad
k = \frac{y''(x)}{(1-y'(x)^2)^{\frac 3 2}}.
\end{equation}
If we think of $y$ as also being a function of the time variable $t$, we have a solution to the MCF (up to tangential diffeomorphisms) if and only if $y$ satisfies the parabolic PDE
\begin{equation}
\label{yPDE}
y_t = \frac{y_{xx}}{1-y_x^2}.
\end{equation}
In the Euclidean plane, the PDE corresponding to the curve shortening flow is
\begin{equation}
\label{yPDE3}
y_t = \frac{y_{xx}}{1+y_x^2}.
\end{equation}
The following lemma shows how one can transform certain solutions in the Euclidean plane into solutions in the Minkowski plane. Examples of this are given in Section \ref{other}.
\begin{lem}
\label{transformers}
If $y(x,t)$ is a solution to \eqref{yPDE3} which is analytic and even in $x$, then $\hat y(x,t) = y(ix,-t)$ is a solution to \eqref{yPDE}, where $i^2=-1.$ 
\end{lem}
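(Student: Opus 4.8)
The plan is to regard $y$ as a real-analytic function of its spatial variable and to interpret $\hat y(x,t)=y(ix,-t)$ as the substitution of the imaginary value $ix$ into this analytic function, followed by the time reversal $t\mapsto -t$. The two hypotheses play distinct roles: analyticity in $x$ is what allows $y(\cdot,t)$ to be evaluated at complex arguments, while evenness in $x$ is what guarantees that the resulting quantity is real. Indeed, if the Taylor expansion of $y$ in its first slot contains only even powers, then replacing $x$ by $ix$ turns each $x^{2k}$ into $(-1)^k x^{2k}$, so $\hat y$ takes real values for real $x$; it is moreover even in $x$, and all of its $x$-derivatives are real.

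Granting this, I would simply differentiate by the chain rule. Writing $y_1,y_{11}$ for the first and second derivatives in the spatial slot and $y_2$ for the derivative in the time slot, all evaluated at the point $(ix,-t)$, one finds $\hat y_x = i\,y_1$, $\hat y_{xx}=-\,y_{11}$, and $\hat y_t=-\,y_2$. Inverting these relations gives $y_{11}=-\hat y_{xx}$ and $y_1^2=-\hat y_x^2$ at $(ix,-t)$. Substituting into the Euclidean equation $y_2=y_{11}/(1+y_1^2)$ then yields
\begin{equation*}
\hat y_t=-\,y_2=-\,\frac{y_{11}}{1+y_1^2}=-\,\frac{-\hat y_{xx}}{1-\hat y_x^2}=\frac{\hat y_{xx}}{1-\hat y_x^2},
\end{equation*}
which is exactly \eqref{yPDE}. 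Note that the denominator $1+y_1^2$ evaluated at the complex argument equals $1-\hat y_x^2$, so it is nonzero precisely when the new curve is space-like, which matches the domain of validity of \eqref{yPDE}.

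The only step requiring genuine justification---and the one I expect to be the main obstacle---is the claim that the Euclidean PDE \eqref{yPDE3}, known to hold for \emph{real} spatial arguments, continues to hold after the real variable is replaced by $ix$. I would settle this with the identity theorem: fix $t$, and consider $F(a)=y_2(a,-t)\,\bigl(1+y_1(a,-t)^2\bigr)-y_{11}(a,-t)$, which is analytic in $a$ on a complex neighborhood of the real axis (where $y$ is analytic and $1+y_1^2$ stays away from $0$). Since $y$ solves \eqref{yPDE3}, $F$ vanishes on the real axis, hence $F\equiv 0$ throughout that neighborhood, and in particular $F(ix)=0$ for $x$ small enough that $ix$ lies in the neighborhood. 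This delivers the Euclidean relation at $(ix,-t)$ used above. Once this analytic-continuation point is in place, the remainder is just the bookkeeping of signs carried out in the previous paragraph.
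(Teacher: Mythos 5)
Your proof is correct. The paper in fact states Lemma \ref{transformers} with no proof at all (it is treated as a direct verification), so your argument supplies exactly what is left implicit: the sign bookkeeping $\hat y_x = i y_1$, $\hat y_{xx} = -y_{11}$, $\hat y_t = -y_2$ is right, and you correctly isolate the one genuinely nontrivial point, namely that the Euclidean equation \eqref{yPDE3}, known only on the real axis, persists at the imaginary argument $ix$, which your identity-theorem argument for $F(a)=y_2(a,-t)\bigl(1+y_1(a,-t)^2\bigr)-y_{11}(a,-t)$ settles cleanly (and more carefully than dividing by the denominator would). The only step you take tacitly is that the complexified function is jointly regular, i.e., that $\partial_t$ of the analytic continuation agrees with the continuation of $y_t$ and is analytic in the spatial slot; this is harmless to flag as an assumption, and it holds trivially for the explicit, jointly analytic solutions (contracting circle, Grim Reaper, Angenent's oval, Grim Reaper wave) to which the lemma is applied in Section \ref{other}.
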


In the case where $\xi$ is an increasing function of $\eta$, the parametrization is $X(\eta) = (\xi(\eta), \eta)$ and direct calculations yield
\begin{equation}
\label{xijofnur}
T = (\xi'(\eta)^{\frac 1 2}, \xi'(\eta)^{-\frac 1 2}), \quad N = (\xi'(\eta)^{\frac 1 2},-\xi'(\eta)^{-\frac 1 2}), \quad k = \frac{\xi''(\eta)}{2\xi'(\eta)^{\frac 3 2}}.
\end{equation}
The parabolic PDE for the flow now takes the simple form
\begin{equation}
\label{xiPDE}
\xi_t = \frac{\xi_{\eta\eta}}{\xi_\eta}.
\end{equation}
Note that if we instead let $|y'(x)| > 1$ or $\xi'(\eta) < 0$, the curve $X$ becomes time-like. Equations \eqref{yjofnur} and \eqref{xijofnur} take on a slightly different form but the PDEs \eqref{yPDE} and \eqref{xiPDE} remain the same. However, now they are only parabolic if we reverse the direction of the time variable, corresponding to what we said before.

It was shown in \cite{gagham86} that for convex curves in the Euclidean plane, the flow is equivalent to the following PDE for the curvature $k$:
\begin{equation}
\label{kPDE3}
k_t = k^2k_{\theta\theta} + k^3,
\end{equation}
where the derivative with respect to $t$ is taken with $\theta$ held fixed. The corresponding PDE in the Minkowski plane is 
\begin{equation}
\label{kPDE2}
k_t = k^2k_{\theta\theta} - k^3,
\end{equation}
which can be seen by carrying out the Minkowski version of the calculations in \cite{gagham86}.
The following lemma gives another method for transforming solutions between the two planes.	
\begin{lem}
\label{transformers2}
If $k(\theta,t)$ is a solution to \eqref{kPDE3} which is analytic and even in $\theta$, then $\hat k(\theta,t) = k(i\theta,-t)$ is a solution to \eqref{kPDE2}. 
\end{lem}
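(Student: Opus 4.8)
The plan is to substitute the complex change of variables $\theta \mapsto i\theta$, $t \mapsto -t$ directly into the Euclidean PDE \eqref{kPDE3}, exactly as one would for Lemma \ref{transformers}, and to track how the factor $i^2 = -1$ converts the positive cubic term in \eqref{kPDE3} into the negative one in \eqref{kPDE2}. Writing the Euclidean solution as $k = k(z,\tau)$ and setting $\hat k(\theta,t) = k(i\theta,-t)$, I would first check that $\hat k$ is genuinely real-valued. Since $k$ is analytic and even in its first slot, its Taylor expansion in $z$ contains only even powers, so replacing $z$ by $i\theta$ turns each $z^{2n}$ into $(-1)^n\theta^{2n}$, which is real. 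This is exactly where the evenness hypothesis enters, and the same observation shows $\hat k$ is again even in $\theta$, so all of the $\theta$-derivatives appearing below are real as well and $\hat k$ is a bona fide real curvature function.

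Next I would compute the derivatives by the chain rule. Differentiating $\hat k(\theta,t) = k(i\theta,-t)$ gives $\hat k_\theta = i\, k_z$, hence $\hat k_{\theta\theta} = i^2 k_{zz} = -k_{zz}$, while $\hat k_t = -k_\tau$, all derivatives of $k$ being evaluated at $(i\theta,-t)$. Substituting into the right-hand side of \eqref{kPDE2} gives
\[
\hat k^2 \hat k_{\theta\theta} - \hat k^3 = -k^2 k_{zz} - k^3 = -\bigl(k^2 k_{zz} + k^3\bigr),
\]
again evaluated at $(i\theta,-t)$, which should match $\hat k_t = -k_\tau$ precisely when the bracketed expression equals $k_\tau$ at that point.

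The one step that needs care — and which I expect to be the only real obstacle — is justifying that the identity \eqref{kPDE3}, which is given only for real arguments, continues to hold at the purely imaginary argument $z = i\theta$. For this I would fix $\tau$ and view both sides of \eqref{kPDE3} as analytic functions of the complex variable $z$ on a strip about the real axis, using the analyticity hypothesis. Since they agree on the real axis, the identity theorem forces them to agree throughout the strip, in particular at $z = i\theta$, so that $\bigl(k^2 k_{zz} + k^3\bigr)(i\theta,-t) = k_\tau(i\theta,-t)$. Combining this with the computation above yields $\hat k_t = \hat k^2 \hat k_{\theta\theta} - \hat k^3$, which is exactly \eqref{kPDE2}. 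The argument runs entirely parallel to the one for Lemma \ref{transformers}; the only new feature is that here the \emph{second} spatial derivative carries the factor $i^2$, which is what flips the sign of the cubic term rather than that of a first-order quantity.
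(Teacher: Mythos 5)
Your proof is correct. The paper states this lemma without any proof (like its companion Lemma \ref{transformers}, it is treated as a routine observation), and your argument supplies precisely the intended reasoning: the chain rule gives $\hat k_\theta = i k_z$ and $\hat k_{\theta\theta} = -k_{zz}$, which flips the sign of the cubic term; evenness guarantees that $\hat k$ is real-valued; and the identity theorem justifies evaluating the Euclidean PDE \eqref{kPDE3} at the purely imaginary argument $z = i\theta$. The only implicit assumption --- that the analytic continuation of $k$ in its first slot actually extends far enough to reach $i\theta$ --- is already built into the lemma's statement that $\hat k(\theta,t) = k(i\theta,-t)$ is defined, so your argument is complete as written.
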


\section{Self-similar motions of curves under the MCF}
\label{selfsim}

Let $X:I \rightarrow \mathbf R^{1,1}$ be a curve. A \emph{self-similar motion} of $X$ is a map $\hat X: I \times J \rightarrow \mathbf R^{1,1}$ of the form
\begin{equation}
\label{hreyfing}
\hat X(u,t) = g(t)e^{hf(t)}X(u) + H(t) .
\end{equation}
Here $J$ is an interval containing $0$ and $f,g: J\rightarrow \mathbf R$ and $H: J \rightarrow \mathbf R^{1,1}$ are differentiable functions s.t.\ $f(0) = 0$, $g(0) = 1$ and $H(0) = 0$, and hence $\hat X(u,0) = X(u)$. The function $f$ determines the hyperbolic rotation, $g$ determines the dilation and $H$ is the translation term.

This motion is the mean curvature flow of $X$ (up to tangential diffeomorphisms) if and only if the equation
\begin{equation*}
\left\langle \frac{\partial \hat X}{\partial t}(u,t),N(u,t) \right\rangle = -k(u,t)
\end{equation*}
holds for all $u \in I$, $t  \in J$. Simple calculations yield that this equation is equivalent to
\begin{equation}
\label{FullJafna}
\begin{aligned}
g^2(t)f'(t)\langle X(u),T(u) \rangle&- g(t)g'(t)\langle X(u),N(u)\rangle   \\
&- g(t)\langle e^{-hf(t)}H'(t),N(u)\rangle = k(u).
\end{aligned}
\end{equation}
By looking at this equation at time $t=0$, we see that $X$ has to satisfy
\begin{equation}
\label{AdalJafna}
a\langle X,T \rangle - b \langle X,N \rangle - \langle C,N \rangle = k,
\end{equation}
where $a=f'(0)$, $b = g'(0)$ and $C = H'(0)$. It turns out that satisfying an equation of this form is also a sufficient condition for $X$ to move in a self-similar manner under the MCF. To see that, we treat separately three cases.\\

{\bf Translation:} First assume $X$ satisfies the pure translation equation
\begin{equation*}
- \langle C,N \rangle = k.
\end{equation*}
Then we can take $H=Ct$, and easily verify that Equation \eqref{FullJafna} is satisfied for all $u \in \mathbf R$, $t\in J$. Thus, under the flow, the curve $X$ translates with constant velocity vector $C$.\\

{\bf Dilation and rotation:}
Now assume $X$ satisfies the screw-dilation equation
\begin{equation}
\label{RotScal}
a\langle X,T \rangle - b \langle X,N \rangle  = k.
\end{equation}
If the functions $g$ and $f$ satisfy $g(t)g'(t) = b$ and $g^2(t)f'(t) = a$ for all $t\in J$, then Equation \eqref{FullJafna} is satisfied for all $u\in I$, $t\in J$. Solving these differential equations with our initial values gives
\begin{equation}
\label{formulagf}
g(t) = \sqrt{2bt+1} \quad \text{and} \quad f(t) = \begin{cases} \frac{a}{2b}\log(2bt+1) & \text{if } b\neq0,\\
at & \text{if } b=0. \end{cases} 
\end{equation}
Therefore, under the MCF the curve $X$ rotates and dilates around the origin as governed by the functions $f$ and $g$.

Note that when $b\neq0$, the screw-dilation factor $ g(t)e^{hf(t)}$ takes the following form in the
diagonal basis:
 \begin{equation*}
 \begin{aligned}
 (g(t)e^{f(t)},g(t)e^{-f(t)}) &=(g(t)^{1+\frac a b},g(t)^{1-\frac{a}{b}})\\
 &= ((2bt+1)^{\frac{b+a}{2b}},(2bt+1)^{\frac{b-a}{2b}}).
 \end{aligned}
 \end{equation*}
In the cases $a=b$ and $a=-b$, the factor is simply $((2bt+1),1) $ and $(1,(2bt+1))$, respectively, so the screw-dilation reduces to a linear dilation in one of the variables $\xi$ and $\eta$, leaving the other one intact.\\

{\bf Dilation, rotation and translation}:
Now assume $X$ satisfies the full Equation \eqref{AdalJafna}, which can also be written in the form
\begin{equation}
\label{AdalJafna2}
-\langle (b+ha)X + C,N\rangle = k.
\end{equation}
In the Euclidean plane, the author showed in \cite{hph1} that we can always get rid of the translation term $C$ by translating $X$ by a fixed vector. The same thing can be done here, but only when $a^2\neq b^2$, since otherwise $b+ha$ is a zero divisor.  If we put $\hat X = X + \frac{C}{b+ha}$, then $\hat X$ satisfies an equation of the form \eqref{RotScal}. Therefore, $X$ screw-dilates around the point $-\frac{C}{b+ha}$.

We are left with the case $a^2=b^2$. By reflecting the curve across the $y$-axis if necessary, we may assume $a=b$.  By translating $X$ along the $\xi$-axis, we can cancel out the $\xi$-component of $C$, so we may assume $C$ only has a $\eta$-component, i.e., $C = (0,c)$.
As before, $f$ and $g$ are given by \eqref{formulagf} and the screw-dilation factor is $((2bt+1), 1)$, so we just have scaling in the variable $\xi$. We want the translation function $H$ to satisfy 
\begin{equation*}
g(t)e^{-hf(t)}H'(t) = C,\quad \text{i.e.,} \quad H'(t) = \frac{e^{hf(t)}}{g(t)}(0,c) = \left(0,\frac{c}{1+2bt}\right),
\end{equation*}
which by integration results in
\begin{equation*}
H(t) = \left(0,\frac{c}{2b}\log(2bt+1)\right).
\end{equation*}
The self-similar motion under the flow therefore consists of scaling in the $\xi$ variable and translation in the $\eta$ variable.\\

Thus the classification of all self-similar solutions to the MCF in the Minkowski plane has been reduced to finding all curves that satisfy Equation \eqref{AdalJafna}. 
To find these curves, we use two different approaches. The straightforward one is to use \eqref{yjofnur} or \eqref{xijofnur} to rewrite \eqref{AdalJafna} as an ODE for $y$ as a function of $x$, or for $\xi$ as a function of $\eta$. The two ODEs are
\begin{equation}
\label{yODE}
y''(x) = (1-y'(x)^2)\left(a(x-y(x)y'(x))-b(xy'(x)-y(x))-(c_1y'(x)-c_2)\right)
\end{equation}
and
\begin{equation*}
\label{xiODE}
\xi''(\eta) = \xi'(\eta)((a+b)\xi(\eta)+(a-b)\eta \xi'(\eta) + d_1-d_2\xi'(\eta))
\end{equation*}
where $C =  c_1 + hc_2 = (d_1,d_2)$. Depending on the values of $a$, $b$ and $C$, one of the ODEs can be easier to analyze than the other. It can be verified that the equations remain the same when the curve $X$ is assumed to be time-like.

A nice property that we see from \eqref{yODE} is the following.
\begin{lem}
All space-like curves satisfying Equation \eqref{AdalJafna} are entire graphs over the $x$-axis.
\end{lem}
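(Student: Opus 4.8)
The plan is to argue by continuation of solutions to the ODE \eqref{yODE}. A space-like curve has everywhere space-like tangent, so by \eqref{yjofnur} the $x$-component of $T$ equals $(1-y'^2)^{-1/2}>0$; hence $x$ is a strictly monotone parameter along the curve, and the curve is globally a graph $y=y(x)$ with $|y'|<1$ defined on a maximal $x$-interval $(x_-,x_+)$. Saying the curve is an entire graph over the $x$-axis is exactly the assertion that $x_-=-\infty$ and $x_+=+\infty$. So I would fix a maximal solution of \eqref{yODE} satisfying $|y'|<1$ and show that neither endpoint can be finite; by symmetry it suffices to rule out $x_+<\infty$.

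By the standard extension principle, a maximal solution can fail to continue past a finite $x_+$ only if $y$ or $y'$ escapes, or if $y'$ approaches the boundary value $\pm1$ at which \eqref{yODE} degenerates (the light-like locus). First I would dispose of escape: since $|y'|<1$, the function $y$ is Lipschitz with constant at most $1$, so on the bounded interval $[x_0,x_+)$ it stays bounded, while $y'\in(-1,1)$ is automatically bounded. The essential point is therefore the degeneration at $y'=\pm1$, and here the specific form of \eqref{yODE} is what saves us: the right-hand side carries the factor $(1-y'^2)$, which vanishes precisely as $y'\to\pm1$.

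To exploit this I would pass to the variable $\theta=\operatorname{arctanh}(y')$, which by \eqref{yjofnur} is exactly the hyperbolic tangent angle (so that $y'=\tanh\theta$ and $T=e^{h\theta}$). Dividing \eqref{yODE} by $(1-y'^2)$ gives
\begin{equation*}
\frac{d\theta}{dx}=\frac{y''}{1-y'^2}=a\big(x-y\,y'\big)-b\big(xy'-y\big)-\big(c_1y'-c_2\big),
\end{equation*}
whose right-hand side is a polynomial in $x,y,y'$ and is therefore bounded once $x$ ranges over the compact interval $[x_0,x_+]$, $y$ over its bounded range, and $y'$ over $[-1,1]$. Hence $\theta$ has bounded derivative on $[x_0,x_+)$, so $\theta$ converges to a finite limit as $x\to x_+$, and consequently $y'=\tanh\theta$ stays bounded strictly away from $\pm1$. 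With $y$ and $y'$ both convergent and $|y'|$ bounded below $1$, the standard existence theorem lets me extend the solution past $x_+$, contradicting maximality; thus $x_+=+\infty$, and likewise $x_-=-\infty$.

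The step I expect to be the main obstacle is precisely the degeneration at $y'=\pm1$: naively the coefficient $(1-y'^2)^{-1}$ implicit in front of $y''$ blows up there, so one cannot directly invoke a continuation result for \eqref{yODE} itself. The $\operatorname{arctanh}$ substitution is the device that absorbs this singularity, converting the degenerate equation into one with bounded right-hand side, and the crux is verifying that $\theta$ genuinely stays bounded (rather than racing off to $\pm\infty$, which is the only mechanism that would drive $y'\to\pm1$ and break the space-like condition).
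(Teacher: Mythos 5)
Your proof is correct, and it reaches the conclusion by a genuinely different mechanism than the paper. Both arguments reduce the lemma to the same key point --- a solution of \eqref{yODE} with $|y'|<1$ somewhere can never have $y'$ reach (or approach, in finite $x$) the values $\pm 1$ --- and both finish with the standard extension criterion. But the paper's key step is soft: it notes that the light-like lines $y=\pm x+B$ are themselves exact solutions of \eqref{yODE} (the factor $1-y'^2$ kills the right-hand side there), so a space-like solution attaining $y'=\pm1$ at a point would coincide with such a line by uniqueness for the second-order ODE, a contradiction; hence $|y'|<1$ persists, and boundedness of $(y,y')$ on finite intervals gives extension to all of $\mathbf R$. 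Your key step is quantitative: the substitution $\theta=\operatorname{arctanh}(y')$ turns \eqref{yODE} into $d\theta/dx = a(x-yy')-b(xy'-y)-(c_1y'-c_2)$, whose right-hand side is bounded on finite $x$-intervals, so $\theta$ stays finite and $y'$ stays strictly inside $(-1,1)$. The paper's argument is a one-liner; yours buys explicit control on how close $y'$ can come to $\pm1$ on bounded intervals (a bound of the shape $1-|y'(x)|\geq e^{-C(1+x^2)}$), and it does not rely on the fortunate fact that the light-like lines solve the equation exactly, so it would survive perturbations of the equation for which that barrier disappears. One framing correction: \eqref{yODE} does not ``degenerate'' at $y'=\pm1$ --- its right-hand side is polynomial in $(x,y,y')$ and perfectly smooth there, so continuation of the ODE solution itself is never in doubt; what can fail at a finite endpoint is only the space-like constraint $|y'|<1$, i.e., the extended ODE solution might cease to describe a space-like graph. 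Your enumeration of failure modes for the constrained problem is nevertheless the right one, and your $\theta$-estimate closes exactly that gap, so the proof stands.
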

\begin{proof}
First notice that every light-like line  $y(x) = x+B$ or $y(x) = -x+B$ is a solution to \eqref{yODE}. Therefore, if we have a solution to \eqref{yODE} which satisfies $|y'(x)|<1$ at some point, it will satisfy that strict inequality everywhere, since if $y'$ reached $-1$ or $1$, we would contradict uniqueness (because the ODE is of order 2). Since $y'$ is bounded, the solution extends uniquely to all of $\mathbf R$.
\end{proof}
A consequence of this is that all the curves are complete as Euclidean curves. However, as we will see below, they are not necessarily complete as Minkowski curves. We also note that for each value of $a$, $b$ and $C$, we have a two-parameter family of curves satisfying \eqref{AdalJafna} (parametrized for example by the values of $y(0)$ and $y'(0)$). However, by identifying curves that are equal up to a hyperbolic rotation, there is actually only a one-parameter family of curves.\\

The other approach to describing the curves is similar to what was done by the author in \cite{hph1,hph2}. It can be used for curves that satisfy Equation \eqref{RotScal} and goes as follows.
We introduce the functions $\tau = \langle X, T \rangle$ and $\nu = \langle X, N \rangle$. They satisfy
\begin{equation}
\label{TauNuDiffur}
\begin{aligned}
 \tau_s &= 1 + k\nu \\
\nu_s &= k\tau
\end{aligned}
\end{equation}
and furthermore,
\begin{equation}
\label{Grunnjafna}
X = (\tau - h \nu)T \quad\text{and}\quad \langle X, X \rangle = \tau^2-\nu^2.
\end{equation}
The Minkowski analogue of Lemma 3.1 in \cite{hph2} is as follows.
\begin{lem}
\label{FerillFenginn}
For every smooth function $\Phi:\mathbf R^2\rightarrow \mathbf R$, point $z_0\in\mathbf R^{1,1}$ and hyperbolic angle $\theta_0 \in \mathbf R$, there is a unique nonextendible space-like curve $X:I \rightarrow \mathbf R^{1,1}$ satisfying the equation $k = \Phi(\tau,\nu)$ and going through $z_0$ with hyperbolic angle $\theta_0$.
\end{lem}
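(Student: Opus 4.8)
The plan is to translate the geometric problem into an initial value problem for an autonomous system of ODEs, solve it with the standard existence--uniqueness theorem, and then reconstruct the curve by an explicit formula so that the identification of the abstract unknowns with the geometric quantities $\tau$ and $\nu$ is automatic.

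First I would parametrize the sought curve by Minkowski arc length $s$ with $X(0) = z_0$ and unit tangent $T = e^{h\theta}$, so that $\theta(0) = \theta_0$. Substituting the constraint $k = \Phi(\tau,\nu)$ into \eqref{TauNuDiffur} produces the autonomous system
\begin{equation*}
\tau_s = 1 + \Phi(\tau,\nu)\,\nu, \qquad \nu_s = \Phi(\tau,\nu)\,\tau,
\end{equation*}
with initial data $\tau(0) = \langle z_0, e^{h\theta_0}\rangle$ and $\nu(0) = \langle z_0, he^{h\theta_0}\rangle$ dictated by \eqref{Grunnjafna}. Since $\Phi$ is smooth, the right-hand sides are smooth, so the Picard--Lindel\"of theorem gives a unique solution $(\tau,\nu)$ on a maximal open interval $I \ni 0$. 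I would then set $\theta(s) = \theta_0 + \int_0^s \Phi(\tau,\nu)\,ds'$.

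The key step is to define the curve directly by $X := (\tau - h\nu)e^{h\theta}$, modelled on \eqref{Grunnjafna}, rather than by integrating $X_s = T$ and checking consistency afterwards. A short computation using $h^2 = 1$ and $\theta_s = \Phi(\tau,\nu)$ shows that all the cross terms cancel and $X_s = e^{h\theta}$; hence $X$ is a unit-speed space-like curve whose tangent is $T = e^{h\theta}$, whose tangent angle is $\theta$, and whose curvature is $k = \theta_s = \Phi(\tau,\nu)$. Because $\langle e^{h\theta}, e^{h\theta}\rangle = 1$ everywhere, the curve never degenerates and stays space-like on all of $I$. Using $\overline{e^{h\theta}} = e^{-h\theta}$ together with $\langle z_1,z_2\rangle = \mathrm{Re}(\overline{z_1}z_2)$, I would verify that $\langle X, T\rangle = \tau$ and $\langle X, N\rangle = \nu$ identically, and that $X(0) = z_0$ with angle $\theta_0$. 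Combining these identities gives $k = \Phi(\langle X,T\rangle, \langle X,N\rangle)$, so $X$ solves the prescribed equation.

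For uniqueness and nonextendibility I would argue in reverse. Any space-like curve through $z_0$ with angle $\theta_0$ satisfying $k = \Phi(\tau,\nu)$, parametrized by arc length with $s = 0$ at $z_0$, has Frenet data satisfying exactly the system above with the same initial conditions, so uniqueness for the ODE forces its $(\theta,\tau,\nu)$ to coincide with ours; since $X = (\tau - h\nu)e^{h\theta}$ recovers the curve from this data, the two curves agree on the overlap of their domains. Finally, taking $I$ to be the maximal interval of the ODE makes $X$ nonextendible: any space-like extension satisfying the equation would produce Frenet data extending the ODE solution past $I$, contradicting maximality. The only genuinely delicate point is the identification $\langle X,T\rangle = \tau$, $\langle X,N\rangle = \nu$; choosing the closed-form definition $X = (\tau - h\nu)e^{h\theta}$ is precisely what reduces this to elementary hyperbolic-number algebra rather than a separate Gronwall-type argument on the difference of the abstract and geometric $(\tau,\nu)$.
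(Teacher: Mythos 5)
Your proposal is correct and follows essentially the same route as the paper: set up the autonomous system for $(\tau,\nu,\theta)$ with the equivalent initial data, take its unique maximal solution, and define $X = (\tau - h\nu)e^{h\theta}$ so that $X_s = e^{h\theta}$ and the identities $\langle X,T\rangle = \tau$, $\langle X,N\rangle = \nu$ follow by hyperbolic-number algebra. Your explicit reverse-direction argument for uniqueness and nonextendibility is a nice touch that the paper leaves implicit in the phrase ``unique maximal solution,'' but it is not a different method.
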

\begin{proof}
Keeping in mind \eqref{TauNuDiffur} and \eqref{Grunnjafna}, we let $\tau, \nu, \theta$ be the unique maximal solution to the ODE system
\begin{equation*}
\label{taunuODE}
\left\{
\begin{aligned}
\tau' &= 1 + \nu\Phi(\tau,\nu) \\
\nu' &= \tau\Phi(\tau,\nu)\\
\theta' &= \Phi(\tau,\nu)
\end{aligned} \right.
\end{equation*}
with initial values $\theta(0) = \theta_0$, $\tau(0)-h\nu(0) = e^{-h\theta_0}z_0$, and then define the curve as
\begin{equation}
\label{Xjafna}
 X = (\tau-h\nu)e^{h\theta}.
\end{equation}
Note that
\begin{equation*}
X' = (\tau'-h\nu'+h\theta'(\tau-h\nu))e^{h\theta} = e^{h\theta},
\end{equation*}
so $X$ is parametrized by Minkowski arc-length with tangent $T = e^{h\theta}$, and hence the curvature $k$ is equal to $\theta' = \Phi(\tau,\nu)$. Finally,
\begin{equation*}\begin{aligned}
\langle X,T \rangle &=\text{Re}(Xe^{-h\theta} )= \tau, \\
\langle X,N \rangle &= \text{Re}(X(-h)e^{-h\theta} )= \nu,
\end{aligned}\end{equation*}
finishing the proof.
\end{proof}

So all curves satisfying Equation \eqref{RotScal} (i.e., $k = a\tau - b\nu$) can be found by solving the system of ODEs
\begin{equation}
\label{taunuODE2}
\left\{
\begin{aligned}
\tau' &= 1 + \nu(a\tau-b\nu) \\
\nu' &= \tau(a\tau-b\nu).
\end{aligned} \right.
\end{equation}
We can also let $l = -b\tau + a\nu$ and study the equivalent system
\begin{equation*}
\left\{
\begin{aligned}
k' &= a + kl \\
l' &= -b + k^2.
\end{aligned} \right.
\end{equation*}
Note that $k+hl=(a-hb)(\tau+h\nu)$. The corresponding Euclidean system was used by the author in \cite{hph1}. From \eqref{Xjafna} we see that
\begin{equation*}
\xi = e^\theta (\tau - \nu) = e^\theta  \frac{k-l}{a+b}, \quad \eta = e^{-\theta}(\tau + \nu) = e^{-\theta}\frac{k+l}{a-b}.
\end{equation*}
Hence, $X$ intersects the light-like axes exactly when the $(\tau,\nu)$ and $(k,l)$-trajectories intersect the diagonals in the respective phase planes. 

Also note that the right hand side of \eqref{taunuODE2} remains the same when $(\tau,\nu)$ is replaced by $(-\tau,-\nu)$. Therefore, $s \mapsto -(\tau(-s),\nu(-s))$ is also a solution to the system, which of course just corresponds to the curve $X$ parametrized backwards (and reflected across the origin to keep $T$ in the right arm of the unit hyperbola). This symmetry will simplify some of our arguments.

Unlike the Euclidean case, the solutions to this ODE system are not necessarily defined on all of $\mathbf R$ and the curvature $k$ can blow up for finite values of $s$. That means the corresponding end of the curve has finite Minkowski arc-length and hence is geodesically incomplete, even though it is always complete as a Euclidean curve, as we showed above.

In the next six sections of the paper, we look separately at different values of $a$, $b$ and $C$, i.e., different self-similar motions, and describe some properties of the corresponding curves.

\section{Translation}
\label{translating}
A curve $X$ translates with constant velocity vector $C$ if and only if it satisfies the translation equation $-\langle C, N \rangle = k$. By scaling the curve, reflecting it across the $x$ and $y$-axes and applying a hyperbolic rotation, it suffices to consider translations with velocity vectors $1$, $h$ and $h_+$. The solutions below are unique up to constant translations in each variable.

\begin{thm} 
There are three translating solutions:
\begin{itemize}
\item $\cosh x = e^{y-t}$,  with $k = \frac{1}{\cos s}$, $-\frac \pi 2 < s < \frac \pi 2$, translates along the $y$-axis,
\item $\sinh y = e^{t-x}$, with $k = \frac{1}{\sinh s}$, $s>0$, translates along the $x$-axis,
\item $\xi = e^{\eta}+t$, with $k = \frac 1 s$, $s>0$, translates along the $\xi$-axis.
\end{itemize}
\end{thm}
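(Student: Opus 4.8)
The plan is to handle the three velocity vectors one at a time, using the normalization already in place: after scaling, reflecting across the coordinate axes, and applying a hyperbolic rotation, every pure translation is conjugate to translation with $C$ equal to $1$, $h$, or $h_+$. For each such $C$ I will insert the graph formulas \eqref{yjofnur} or \eqref{xijofnur} into the translation equation $-\langle C,N\rangle = k$; this turns it into an ODE that is precisely the $a=b=0$ specialization of \eqref{yODE} (or of its $\xi(\eta)$ companion). Solving the ODE produces the curve, and a single arc-length integration then produces the stated curvature $k(s)$.

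For $C=h$, which points along the $y$-axis, the graph $y=y(x)$ is natural. Since $C=(0,1)$, one finds $-\langle C,N\rangle = (1-y'^2)^{-1/2}$, and equating with $k = y''(1-y'^2)^{-3/2}$ collapses \eqref{yODE} to $y''=1-y'^2$. Integrating $p'=1-p^2$ for $p=y'$ gives $y'=\tanh x$ up to a translation in $x$, hence $y=\log\cosh x$ up to a translation in $y$, i.e.\ $\cosh x = e^{y-t}$. For $C=1$ (along the $x$-axis) the same parametrization gives $-\langle C,N\rangle = -y'(1-y'^2)^{-1/2}$, so \eqref{yODE} becomes $y''=-y'(1-y'^2)$; regarding $p=y'$ as a function of $y$ reduces this to $dp/dy = p^2-1$, whence $y'=-\tanh y$ and $\sinh y = e^{t-x}$. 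For $C=h_+$ (along the $\xi$-axis) the $\xi(\eta)$ graph is cleaner: \eqref{xijofnur} gives $-\langle C,N\rangle = \tfrac12\,\xi'^{-1/2}$, the equation reduces to $\xi''=\xi'$, and so $\xi = e^{\eta}+t$. In every case the two constants of integration are exactly those absorbed by the allowed translations in the two coordinates, which gives the asserted uniqueness.

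The only step requiring genuine care is passing from the spatial variable to Minkowski arc length. Using $ds=(1-y'^2)^{1/2}\,dx$ (respectively $ds=\xi'^{1/2}\,d\eta$) together with the curvatures computed from the solutions — namely $k=\cosh x$, $k=\sinh y$, and $k=\tfrac12 e^{-\eta/2}$ — I would integrate in each case. For $C=h$ this is $ds=dx/\cosh x$, whose Gudermannian antiderivative $s=\arctan(\sinh x)$ gives $\sinh x=\tan s$ and $\cosh x=\sec s$, hence $k=1/\cos s$ with $s$ confined to $(-\tfrac\pi2,\tfrac\pi2)$ as $x$ runs over $\mathbf R$. For $C=1$ one integrates $ds=-dy/\sinh y$ to get $\tanh(y/2)=e^{-s}$, so $k=\sinh y = 1/\sinh s$ on $s>0$; for $C=h_+$ one integrates $ds=e^{\eta/2}\,d\eta$ to get $s=2e^{\eta/2}$ and thus $k=1/s$ on $s>0$. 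The real bookkeeping here is choosing the orientation (the sign of the hyperbolic angle, equivalently the branch of the square root) so that $T$ stays in the right arm of the unit hyperbola, and correctly identifying the half-line or interval on which $s$ lives and where the curvature blows up.

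To close, I would confirm that each solution really does translate rigidly along the claimed axis by checking the evolution PDEs. Substituting $y=\log\cosh x + t$ into \eqref{yPDE} gives $y_t=1$ (vertical translation with unit speed); substituting $\xi=e^{\eta}+t$ into \eqref{xiPDE} gives $\xi_t=1$ (translation along the $\xi$-axis). For $\sinh y = e^{t-x}$ the relation instead reads $y_t=\tanh y = -y_x$, which is exactly the graph signature of a horizontal translation, confirming motion along the $x$-axis. These checks also serve as an independent verification of the ODE solutions, so I expect no conceptual difficulty beyond the reparametrization bookkeeping described above.
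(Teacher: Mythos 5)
Your proposal is correct and follows essentially the same route as the paper: normalize the velocity to $1$, $h$, or $h_+$, reduce the translation equation $-\langle C,N\rangle = k$ to the corresponding graph ODE ($y''=1-y'^2$, $y''=-y'(1-y'^2)$, $\xi''=\xi'$), solve, and reparametrize by Minkowski arc-length to obtain $k=1/\cos s$, $1/\sinh s$, $1/s$; all of your computations check out. The only minor divergence is for $C=1$, where the paper primarily obtains $\sinh y = e^{t-x}$ by taking the time-like branch $y=\log\sinh x$ of the $C=h$ equation and reflecting it across the $\xi$-axis and the $y$-axis, mentioning your direct integration of $y''=-y'(1-y'^2)$ only as an alternative.
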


The three curves appear in Figure \ref{transi}. Each end is labeled with the corresponding limit of the curvature $k$, a convention we will use throughout the paper.

\begin{figure}[t]
	\centering
	\begin{subfigure}[t]{0.48\textwidth}
		\centering
		\includegraphics{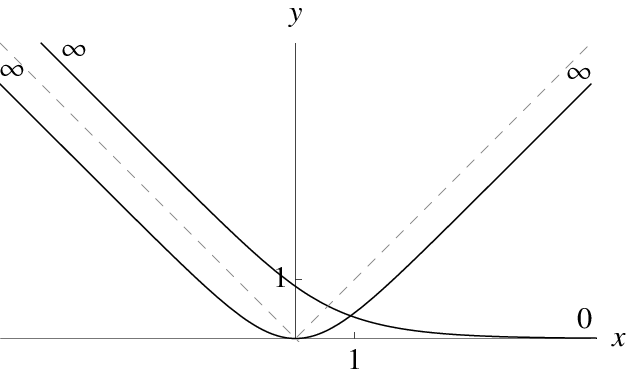}
		\caption{ $ \cosh x = e^{y-t} $ translates along the $y$-axis and $\sinh y =e^{t-x} $ along the $x$-axis.}
		\label{transia}
	\end{subfigure}
	\quad
	\begin{subfigure}[t]{0.48\textwidth}
		\centering
		\includegraphics{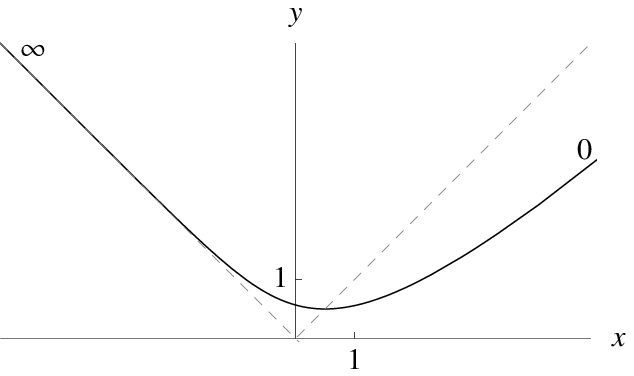}
		\caption{$\xi = e^\eta+t$ translates along the $\xi$-axis.}
		\label{transib}
	\end{subfigure}
	\caption{Translation.}
	\label{transi}
\end{figure}

\begin{proof}
Let's start with $C=h$, i.e., translation with unit speed along the $y$-axis. Then the translation equation results in the ODE
\begin{equation*}
y''(x)=1-y'(x)^2,
\end{equation*}
which can also be seen by putting $y_t = 1$ in Equation \eqref{yPDE}. To find the solution, let $v =y'$. Then $v' = 1-v^2$ and $|v|<1$ (since the curve is space-like), so $v = \tanh x$. Therefore, the curve is $y = \log \cosh x$. It has finite Minkowski-length $\pi$ and curvature given by $k = \frac{1}{\cos s}$, $-\frac \pi 2 < s < \frac \pi 2$. Since the curvature blows up, the maximum principle does not apply to the curve. In fact, as observed by Ecker \cite{eck97}, the translating curve $y = \log \cosh x + t$  is initially below the expanding hyperbola $y = \sqrt{x^2+2t}$, but crosses it at infinity at time $t = \log 2$.

If we instead take $|v| > 1$, we get $v = \coth x$ so $y = \log \sinh x$, $x>0$ (or its reflection across the $y$-axis). This is a time-like curve that translates with unit speed along the $y$-axis. If we reflect it across the $\xi$-axis and then across the $y$-axis (since the first reflection reverses the direction of the flow), we get the space-like curve $x = -\log \sinh y$, which translates with unit speed along the $x$-axis. Alternatively, this curve could be found by taking $C = 1$, which results in the ODE
\begin{equation*}
y''(x) = -y'(x)(1-y'(x)^2).
\end{equation*}
This curve is Minkowski-finite in one direction and Minkowski-infinite in the other, with $k = \frac{1}{\sinh s}$, $s>0$.  In Section \ref{other}, we show how each of these two translating solutions can be obtained by applying a simple transformation to the translating solution in the Euclidean plane, the Grim Reaper $y = \log \cos x + t$.

Finally, let's look at the case $C=h_+$, i.e., translation along the $\xi$-axis. Here it is simpler to work in the diagonal basis, where the ODE takes the form
\begin{equation*}
\xi''(\eta) =  \xi'(\eta).
\end{equation*}
This could also be seen by putting $\xi_t = 1$ in \eqref{xiPDE}. The solution is $\xi = e^{\eta}$. This curve is Minkowski-finite in one direction and Minkowski-infinite in the other, with $k = \frac 1 s$, $s>0$. Another interpretation of the movement of this curve is given in Section \ref{screwdilating} and in the Appendix.
\end{proof}

Note that the three translating curves respectively satisfy
\begin{equation*}
\begin{aligned}
k &= \cosh \theta, \quad \theta \in \mathbf R, \\
k &= - \sinh \theta, \quad \theta < 0, \\
k &= e^{-\theta}, \quad \theta \in \mathbf R.
\end{aligned}
\end{equation*}
They also could have been derived by putting $k_t = 0$ in Equation \eqref{kPDE2}, resulting in the ODE $k_{\theta\theta}-k=0$.

\section{Expansion}
\label{expanding}

A curve expands under the flow if and only if it satisfies Equation \eqref{RotScal} with $a=0$ and $b>0$. By scaling the curve we may assume $b=1$.

\begin{thm}
There are six types of curves which expand under the flow with scaling function $\sqrt{2t+1}$ for $t>-\frac 1 2$, including the expanding hyperbola
\begin{equation*}
y^2 - x^2 =  2t + 1.
\end{equation*}
Each curve is convex, asymptotic to a cone and $k^2e^{\langle X, X \rangle}$ is constant. It comes out of this cone (translated such that its vertex lies at the origin) at $t=-\frac 1 2$, and expands out to infinity, as $t \rightarrow \infty$.
\end{thm}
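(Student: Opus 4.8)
The plan is to reduce everything to the planar system \eqref{taunuODE2} and to exploit a conserved quantity. Specializing Equation \eqref{RotScal} to $a=0$, $b=1$ gives $k=-\nu$, so \eqref{taunuODE2} becomes $\tau'=1-\nu^2$ and $\nu'=-\tau\nu$. The engine of the argument is the function $Q=\nu^2 e^{\tau^2-\nu^2}$: differentiating along the flow and substituting the system, the cross terms cancel and one finds $Q'\equiv 0$. Since $k^2=\nu^2$ and, by \eqref{Grunnjafna}, $\langle X,X\rangle=\tau^2-\nu^2$, this is exactly the assertion that $k^2 e^{\langle X,X\rangle}$ is constant. Because $e^{\langle X,X\rangle}>0$, any trajectory on a level $Q=C>0$ has $k$ nowhere vanishing, hence of constant sign, which is the convexity claim.

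Next I would read off the curves as trajectories in the $(\tau,\nu)$-phase plane. The only critical points are $(0,\pm1)$, where the Jacobian has determinant $-2$, so both are saddles, matching the two-saddle picture from the introduction. These equilibria are the constant-curvature solutions $k=\pm1$ (here $\tau=0$, $\nu=\mp1$, $\langle X,X\rangle=-1$), i.e.\ the branches of the hyperbola $y^2-x^2=1$, so the named expanding hyperbola appears as the fixed point. The level sets of $Q$ foliate the plane, and I would reduce to $\nu\ge 0$ using the two symmetries recorded after \eqref{taunuODE2}: the backward reparametrization $(\tau,\nu)\mapsto(-\tau,-\nu)$, which gives the same curve, and $\nu\mapsto-\nu$, which reflects the curve across the $\xi$-axis. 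Rewriting a level set as $\tau^2=\nu^2-2\log\nu+\log C$ exhibits the critical value $C=e^{-1}$ attained at the saddles: for $0<C<e^{-1}$ the set splits into one component hugging the $\tau$-axis and one hugging the diagonal $\nu=\tau$; for $C=e^{-1}$ it degenerates into the saddle together with its separatrix arcs; and for $C>e^{-1}$ it is a single arc. Counting connected components modulo the two symmetries yields exactly six types (the hyperbola, two subcritical families, two separatrix families, one supercritical family). I expect this exhaustive bookkeeping — showing the list is complete and free of duplication — to be the main obstacle.

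For the asymptotic cone I would study the two ends of each trajectory. On every nonconstant level curve the position runs off to infinity, and the conserved relation $\langle X,X\rangle=\log(C/\nu^2)$ forces $k^2=\nu^2$ either to $0$ (as $\nu\to0$) or to $\infty$ (as $\nu\to\infty$) at each end. Along a $\nu\to0$ end the curve is complete, the tangent angle $\theta$ converges, and the end straightens to a half-line; along a $\nu\to\infty$ end the arc length is finite and, using $\xi=e^{\theta}(\tau-\nu)$ and $\eta=e^{-\theta}(\tau+\nu)$, one of the diagonal coordinates tends to a constant, so the end is asymptotic to a light-like line. In either case the two ends are asymptotic to a pair of straight lines, i.e.\ a cone. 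Making these limits precise — tracking $\theta$ and the surviving coordinate along each escaping branch — is the technical heart, and the step I expect to be hardest.

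Finally, the evolution in $t$ follows from the self-similar structure. Since $a=0$, Equation \eqref{formulagf} gives the motion as the pure dilation $\hat X(\cdot,t)=\sqrt{2t+1}\,X$ about the origin for $t>-\tfrac12$, so the curve expands to infinity as $t\to\infty$. As $t\to-\tfrac12^{+}$ the scale factor tends to $0$: the bounded part of the curve and the cone's vertex are carried to the origin, while the asymptotic lines, being invariant under dilation about their common point, survive in the limit. Hence $\hat X(\cdot,t)$ converges to the asymptotic cone translated so that its vertex lies at the origin, which is the claimed emergence from the cone at $t=-\tfrac12$.
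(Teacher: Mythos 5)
Your classification half is essentially the paper's own argument: the same specialization $k=-\nu$ of \eqref{taunuODE2}, the same conserved quantity $\nu^2e^{\tau^2-\nu^2}=k^2e^{\langle X,X\rangle}$, the same critical level $e^{-1}$ attained at the saddle, and the same count of types (hyperbola, two subcritical, two separatrix, one supercritical). One correction is needed there: the symmetry $(\tau,\nu)\mapsto(\tau,-\nu)$ is \emph{not} reflection across the $\xi$-axis. That reflection, $z\mapsto hz$, turns space-like curves into time-like ones (and reverses the direction of the flow), so identifying trajectory types under it would not be legitimate in a classification of space-like expanders. The map $(\tau,\nu)\mapsto(\tau,-\nu)$ corresponds to reflection across the $x$-axis, $z\mapsto \bar z$, under which $\tau=\langle X,T\rangle$ is unchanged and $\nu=\langle X,N\rangle$ changes sign; with that relabeling your reduction to a half-plane and your count of six are valid. (The paper instead reduces using the symmetry $(\tau,\nu)\mapsto(-\tau,-\nu)$ with $s\mapsto -s$, i.e.\ reflection of the curve through the origin.)

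The asymptotic-cone step is where you genuinely depart from the paper, and as written it has a gap: convergence of the tangent angle $\theta$ along a $\nu\to0$ end does \emph{not} by itself imply that the end is asymptotic to a half-line (the graph of $y=\log x$ has convergent tangent angle and no asymptote). What is needed is convergence of the normal offset, i.e.\ $\int^{\infty}\sinh(\theta_\infty-\theta)\,ds<\infty$, and this holds only because of a quantitative decay estimate: from $\nu'=-\tau\nu$ and $\tau' \to 1$ one gets $\tau\sim s$ and Gaussian decay of $\nu$, hence of $\theta_\infty-\theta$. That estimate must be made explicit; it is precisely the step you flagged as hardest. (Your treatment of the finite-length ends is sound: since $\xi'=e^{\theta}$ and $\eta'=e^{-\theta}$ in the diagonal basis, $\theta\to+\infty$ on such an end forces $\eta$ to converge while $\xi\to\infty$, giving a light-like asymptote.) The paper avoids all rate estimates with a more elementary device: writing \eqref{yODE} as $y''=(1-y'^2)\,(y-xy')$, it notes that $w=y-xy'$ has a global maximum at $x=0$, deduces that $y/x$ decreases and $(y-y(0))/x$ increases to a common limit $L$, and concludes that the monotone bounded function $y-Lx$ converges. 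Your route, once the decay estimate is inserted, actually yields slightly sharper information (the space-like asymptotes pass through the origin, while the light-like asymptotes sit at finite offset), but as proposed it is incomplete at exactly that point.
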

The curves appear in  Figures \ref{expansib}, \ref{expansic}.

\begin{proof}
Here $k = -\nu$, so the system for $\tau$ and $\nu$ becomes
 \begin{equation*}
\label{Staekkunarhneppi}
\left\{
\begin{aligned}
\tau' &= 1 - \nu^2 \\
\nu'&= -\tau\nu.
\end{aligned}
\right.
\end{equation*}
The phase portrait appears in Figure \ref{expansia}.
The trivial solution $\tau = s$, $\nu=0$, corresponds to $X$ being a straight (space-like) line through the origin, making the expansion vacuous. For the other trajectories, it suffices by symmetry to consider only those in the lower half-plane ($\nu<0$).  There $k>0$, so the corresponding curve $X$ is always convex. Notice that we have a fixed point $(0,-1)$, which corresponds to $X$ being the upper arm of the unit hyperbola $y^2-x^2=1$. It is a saddle point. To find the other trajectories, note that the function $\nu^2e^{\tau^2-\nu^2}=k^2e^{\langle X, X \rangle}$ is a constant $A > 0$. The number and type of the trajectories depend on the value of $A$ as follows:

When $A<e^{-1}$, we have two types of trajectories, both symmetric across the $\nu$-axis. One of them has $\tau \rightarrow \pm\infty$ and $\nu \rightarrow 0$ as $s\rightarrow \pm \infty$. It crosses both lines $\tau = \nu$ and $\tau = -\nu$. The corresponding $X$ crosses each light-like axis and has two Minkoswki-infinite ends with $k\rightarrow0$ on each. The other trajectory has $\tau$ and $\nu$ going to $-\infty$ for a finite $s$. Similarly,  in the backwards direction, $\tau$ and $-\nu$ go to $\infty$ for a finite $s$. It lies completely in the region $-\nu > |\tau|$. The corresponding curve $X$ crosses neither light-like axis and has finite Minkowski-length (which increases under the flow) with $k\rightarrow\infty$ on each end. Both curves appear in Figure \ref{expansib}.

When $A=e^{-1}$, we have two trajectories besides the fixed point $(0,-1)$ (up to reflection across the $\nu$-axis which corresponds to reflecting the curve $X$ across the $y$-axis). One of them has $\tau \rightarrow \infty$, $\nu \rightarrow 0$, as $s\rightarrow \infty$, and $(\tau,\nu) \rightarrow (0,-1)$, as $s \rightarrow -\infty$. It crosses the line $\tau = -\nu$. The corresponding curve $X$ crosses the $\xi$-axis and has two Minkowski-infinite ends with the $k \rightarrow 0$ on one and $k\rightarrow 1$ on the other. The other trajectory has $(\tau,\nu) \rightarrow (0,-1)$, as $s \rightarrow \infty$, and in the backwards direction, $\tau$ and $-\nu$ go to $\infty$ for a finite $s$. It lies completely in the region $-\nu > |\tau|$. The corresponding curve $X$ crosses neither light-like axis, has one Minkowski-infinite end with $k\rightarrow 1$, and one Minkowski-finite end with $k \rightarrow \infty$. Both curves appear in Figure \ref{expansic}.

When $A>e^{-1}$, we have one trajectory type (up to reflection across the $\nu$-axis). It has $\tau \rightarrow \infty$, $\nu \rightarrow 0$, as $s\rightarrow \infty$, and in the other direction, $\tau$ and $-\nu$ go to $\infty$ for a finite $s$. It crosses the line $\tau = -\nu$. The corresponding curve $X$ crosses the $\xi$-axis, has one Minkowski-infinite end with $k\rightarrow 0$, and one Minkowski-finite end with $k\rightarrow \infty$. It can be seen in Figure \ref{expansib}.

\begin{figure}[t]
	\centering
	\begin{subfigure}{0.8\textwidth}
		\centering
		\includegraphics{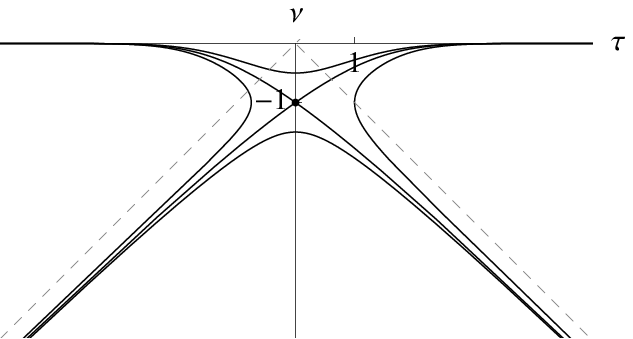}
		\caption{Phase portrait.}
			\vspace{0.1in}
		\label{expansia}
	\end{subfigure} 
	\begin{subfigure}[t]{0.48\textwidth}
		\centering
		\includegraphics{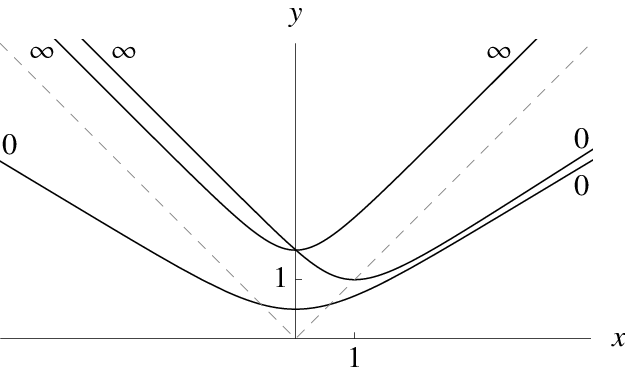}
		\caption{$A < e^{-1}$ and $A > e^{-1}$.}
		\label{expansib}
	\end{subfigure}
	\quad
	\begin{subfigure}[t]{0.48\textwidth}
		\centering
		\includegraphics{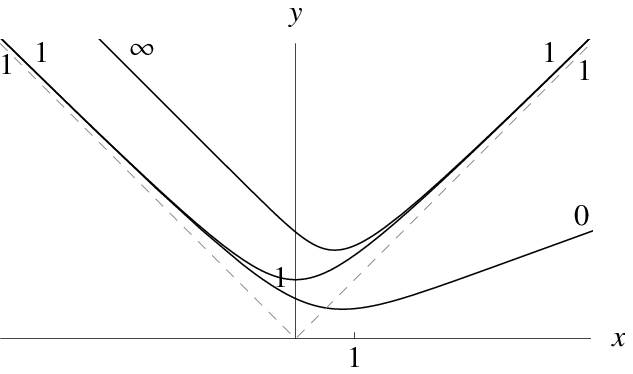}
		\caption{$A = e^{-1}$.}
		\label{expansic}
	\end{subfigure}
	\caption{Expansion, $a=0$, $b=1$.}
	\label{expansi}
\end{figure}

For a more detailed description of the curves, we consider the ODE for $y$ as a function of $x$,
\begin{equation*}
y''(x) = (1-y'(x)^2)(y(x)-xy'(x)).
\end{equation*}
Since $-\nu = k >0$, it is clear that $y$ is convex and $y(0) > 0$. Put $w(x) = y(x)-xy'(x)$. Then $w'(x) = -xy''(x)$, and since $y''(x) > 0$, the positive function $w$ has a global maximum $w(0) = y(0)$, i.e., $0 < y(x)-xy'(x) \leq y(0)$ for all $x$. From this it follows that for $x>0$, $\frac{y(x)}{x}$ is decreasing and $\frac{y(x)-y(0)}{x}$ is increasing, and since $|y'(x)| < 1$, they have the same limit $L \in (y'(0),1]$, as $x\rightarrow \infty$. In particular, $\frac{y(x)-y(0)}{x} < L < \frac{y(x)}{x}$, so the convex function $y(x)-Lx$ lies in the interval $[0,y(0)]$. Hence, it must be decreasing with a limit in that same interval.

So we have shown that $X$ is asymptotic to a straight line with slope $L$, as $x \rightarrow \infty$. If $X$ doesn't cross the $\xi$-axis, then $y(x)>x$ for all $x>0$, so $L=1$. However, if $X$ does cross the $\xi$-axis, then $y(x)$ goes below $x$, so $L<1$.  By symmetry, $X$ is also asymptotic to a straight line, as $x \rightarrow -\infty$. Its slope lies in the interval $[-1,-y'(0))$, and it is $-1$ if and only if $X$ never crosses the $\eta$-axis.

The curve $X$ is therefore asymptotic to a cone. Under the flow, it comes out of this cone (translated such that its vertex lies at the origin) at $t=-\frac 1 2$, and expands out to infinity, as $t \rightarrow \infty$.
\end{proof}

Note that the initial values $y(0) = \alpha$, $y'(0)=0$, correspond to the $(\tau,\nu)$-trajectory going through $(0,-\alpha)$, so varying $\alpha > 0 $ gives all curves with $A < e^{-1}$, in addition to the hyperbola $y = \sqrt{x^2+1}$ when $\alpha = 1$. They are symmetric w.r.t.\ the $y$-axis. Similarly, the curves with $A>e^{-1}$ can be obtained by taking the initial values $y(\alpha)=1$, $y'(\alpha) = 0$, and varying $\alpha>0$.

\section{Contraction}
\label{shrinking}

Now suppose $a=0$, $b<0$, so by scaling the curve we may assume $b=-1$.  

\begin{thm}
There is a one-parameter family of curves which contract under the flow with scaling function $\sqrt{1-2t}$ for $t<\frac 1 2$. Each curve is convex, asymptotic to a cone and symmetric w.r.t.\ the $y$-axis with $k^2e^{-\langle X, X \rangle }$ constant . It comes from infinity as $t \rightarrow -\infty$, and contracts to the union of the positive $\xi$-axis and the negative $\eta$-axis, as $t \rightarrow \frac 1 2$.
\end{thm}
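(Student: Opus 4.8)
The plan is to specialize the screw-dilation machinery of Section~\ref{selfsim} to $a=0$, $b=-1$ and read the geometry off the phase plane. Putting these values into \eqref{RotScal} gives $k=\nu$, and the system \eqref{taunuODE2} becomes $\tau'=1+\nu^2$, $\nu'=\tau\nu$. The formulas \eqref{formulagf} give the scaling function $g(t)=\sqrt{1-2t}$, defined for $t<\tfrac12$, and (since $a=0$) $f\equiv 0$, so the self-similar motion is a pure dilation with no hyperbolic rotation. I would first record two structural features that make the analysis clean: the system has no fixed point, since $\tau'=1+\nu^2\ge 1>0$ forces $\tau$ to be strictly increasing along every trajectory; and the line $\nu=0$ is invariant, so any trajectory with $\nu\ne 0$ keeps the sign of $\nu$, whence $k=\nu$ has constant sign and the curve is convex. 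By the symmetry $(\tau,\nu)\mapsto(-\tau,-\nu)$ noted after \eqref{taunuODE2} it then suffices to treat $\nu>0$.

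Next I would produce the conserved quantity. A direct computation using the system shows $\frac{d}{ds}\bigl(\nu^2 e^{\nu^2-\tau^2}\bigr)=0$, so $A:=\nu^2 e^{\nu^2-\tau^2}=k^2 e^{-\langle X,X\rangle}$ is constant along each trajectory; this is exactly the invariant in the statement. The level sets in $\nu>0$ are indexed by $A\in(0,\infty)$ and each value gives a single trajectory, which yields the one-parameter family. Writing the level relation as $\nu^2 e^{\nu^2}=A\,e^{\tau^2}$ and using that $\nu\mapsto\nu^2 e^{\nu^2}$ is increasing on $(0,\infty)$ shows each trajectory is a single $U$-shaped curve: $\tau$ runs monotonically from $-\infty$ to $+\infty$, while $\nu$ descends to its minimum at $\tau=0$ and returns to $+\infty$ at both ends. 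Estimating $\tau'=1+\nu^2\sim\tau^2$ near the ends (where $\nu\sim|\tau|$) shows $\tau$ and $\nu$ blow up for finite $s$, so both ends have finite Minkowski length with $k\to\infty$.

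For the finer geometry I would switch to the graph equation, which is \eqref{yODE} with $a=0$, $b=-1$, $C=0$, namely $y''=(1-y'^2)(xy'-y)$. This equation is invariant under $x\mapsto -x$, so the solution with $y'(0)=0$ (which exists after a horizontal translation, since $y'$ is increasing and crosses $0$ once) is even by uniqueness, giving the symmetry across the $y$-axis. For the asymptotic cone I would set $w=xy'-y=\nu(1-y'^2)^{\frac12}>0$, so that $w'=xy''$ has the sign of $x$ and $w$ attains a positive minimum $w(0)=-y(0)$ at the vertex. If $y'(x)\to L<1$ as $x\to+\infty$, then $y''=(1-y'^2)w\ge(1-L^2)w(0)>0$ would force $y'\to+\infty$, a contradiction; hence $y'\to 1$, and by evenness $y'\to-1$ as $x\to-\infty$. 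Showing that $\eta=x-y$ converges (i.e.\ $\int^\infty(1-y')\,dx<\infty$) then upgrades the slopes to genuine line asymptotes, so $X$ is asymptotic to a cone whose translate to the origin is $y=|x|$, that is, the positive $\xi$-axis together with the negative $\eta$-axis.

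Finally I would describe the evolution. Because $f\equiv 0$, the flow is the pure dilation $\hat X(\cdot,t)=\sqrt{1-2t}\,X$. As $t\to-\infty$ the factor tends to $\infty$ and the curve comes from infinity; as $t\to\tfrac12^-$ it tends to $0$, shrinking the bounded central region to the origin while the two tails, which follow the slope-$\pm1$ asymptotes, scale onto the light-cone rays, so the rescaled curves converge to the positive $\xi$-axis together with the negative $\eta$-axis. I expect the main obstacle to be precisely this last limit: making rigorous that the $t\to\tfrac12$ limit is \emph{exactly} the cone requires controlling the shrinking central part and the asymptotic tails simultaneously, which is why the careful proof of the line asymptotes (the convergence of $\eta=x-y$) is the technical heart of the argument.
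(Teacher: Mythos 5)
Your proposal follows the paper's proof closely --- the same reduction to $k=\nu$ and the system $\tau'=1+\nu^2$, $\nu'=\tau\nu$, the same conserved quantity $\nu^2e^{\nu^2-\tau^2}$, and the same graph ODE $y''=(1-y'(x)^2)(xy'(x)-y(x))$ with the auxiliary function $w=xy'-y$ minimized at $x=0$ --- but it contains one genuine gap and one incorrect normalization step. The gap is the one you yourself flag: you prove only that $y'\to\pm1$ at $\pm\infty$ and then say that convergence of $\eta=x-y$ ``would upgrade'' this to a true cone asymptote, without proving that convergence. Slope convergence alone is not enough (the graph $y=x-\log x$ has $y'\to 1$ but no line asymptote), and both the claim ``asymptotic to a cone'' and the identification of the $t\to\frac12$ limit rest on it. The fix is short and uses exactly the inequality you already derived: since $w(x)\ge w(0)=\alpha>0$ for all $x$, you have $y''\ge\alpha(1-y'^2)$, i.e.\ $(\operatorname{artanh} y')'\ge\alpha$; integrating from $0$ (where $y'(0)=0$) gives $y'(x)\ge\tanh(\alpha x)$ for $x\ge0$, and integrating once more gives
\begin{equation*}
y(x)\;\ge\;\tfrac1\alpha\log\cosh(\alpha x)-\alpha\;\ge\;x-\alpha-\tfrac{\log 2}{\alpha},
\end{equation*}
so the increasing function $x-y(x)$ is bounded above by $\alpha+\frac{\log 2}{\alpha}$ and therefore converges. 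This is precisely how the paper concludes; it even locates the limit in the interval $(\alpha,\alpha+\frac{\log 2}{\alpha})$, and evenness handles $x\to-\infty$.

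The second problem is your parenthetical justification for assuming $y'(0)=0$: a horizontal translation is \emph{not} a symmetry of the equation $y''=(1-y'^2)(xy'-y)$ --- the dilation is centered at the origin, so a translated contracting curve satisfies \eqref{AdalJafna} with $C\neq0$ rather than \eqref{RotScal} --- and translating so that the critical point of $y$ sits at $x=0$ destroys the equation and with it your uniqueness/evenness argument. The correct normalization, which is what the paper does implicitly, is a hyperbolic rotation: every $(\tau,\nu)$-trajectory with $\nu>0$ crosses the $\nu$-axis at a unique point $(0,\alpha)$, and since Lemma \ref{FerillFenginn} leaves the tangent angle free, one may choose $\theta=0$ there; that point then becomes $(0,-\alpha)$ with $y'(0)=0$, your $x\mapsto-x$ uniqueness argument gives evenness, and varying $\alpha>0$ produces the whole one-parameter family up to hyperbolic rotation.
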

The curves appear in Figure \ref{contrasib}.

\begin{proof}
Here $k =\nu$ so the system of ODEs for $\tau$ and $\nu$ becomes
 \begin{equation*}
\label{Minnkunarhneppi}
\left\{
\begin{aligned}
\tau' &= 1 + \nu^2 \\
\nu'&= \tau \nu.
\end{aligned}
\right.
\end{equation*}
The phase portrait appears in Figure \ref{contrasia}.
There are no fixed points, $\tau$ is increasing and $\nu$ never changes sign. The trivial solution $\tau = s$, $\nu=0$, corresponds to $X$ being a (space-like) line through the origin, making the contraction vacuous. To find the other trajectories, note that the function $\nu^2e^{\nu^2-\tau^2} = k^2e^{-\langle X, X \rangle }$ is constant. By symmetry, it suffices to look at trajectories with $\nu >0$. These trajectories all have $\tau$ and $\nu$ going to $\infty$ for a finite $s$. Similarly, when we follow the trajectories backwards, $\tau$ goes to $-\infty$ and $\nu$ to $\infty$ for a finite $s$. Each trajectory hits both lines $\tau=\nu$ and $\tau = -\nu$. Therefore, the corresponding curve $X$ is convex, crosses each light-like axis, has finite Minkowski-length with $k\rightarrow\infty$ on each end. 
\begin{figure}[t]
	\centering
	\begin{subfigure}[t]{0.48\textwidth}
		\centering
		\includegraphics{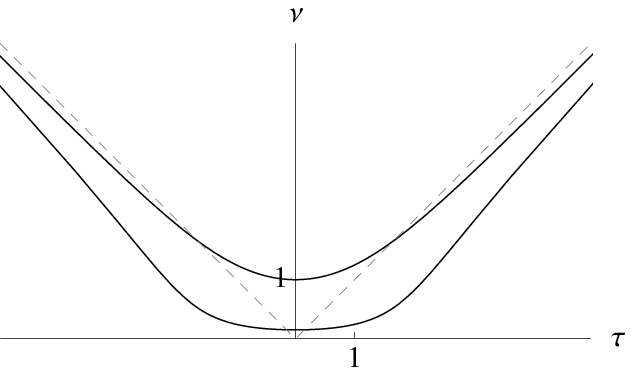}
		\caption{Phase portrait.}
		\label{contrasia}
	\end{subfigure}
	\quad
	\begin{subfigure}[t]{0.48\textwidth}
		\centering
		\includegraphics{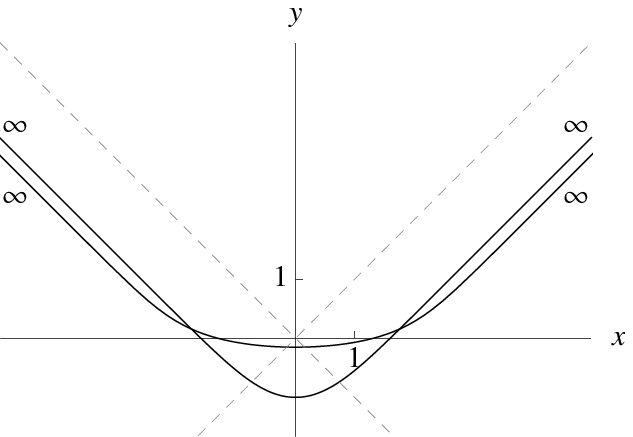}
		\caption{Two contracting curves.}
		\label{contrasib}
	\end{subfigure}
	\caption{Contraction, $a=0$, $b=-1$.}
	\label{contrasi}
\end{figure}

For a more detailed description, we consider the ODE for $y$ as a function of $x$,
\begin{equation*}
y''(x) = (1-y'(x)^2)(xy'(x)-y(x)).
\end{equation*}
The initial values $y(0) = -\alpha$, $y'(0) = 0$, correspond to the $(\tau, \nu)$-trajectory going through $(0,\alpha)$, so by varying $\alpha > 0$ we get all contracting curves. Also, with these initial values $y$ is an even function of $x$.

Now, it is clear that $y$ is convex, since $k>0$. Put $w(x) = xy'(x)-y(x)$. Since $w'(x) = xy''(x)$ and $y''(x) > 0$,  $w$ has a global minimum $w(0) = \alpha$. Therefore, $y''(x) \geq \alpha (1-y'(x)^2)$, so by integration we get $\text{artanh}\, y'(x) \geq \alpha x$, i.e., $y'(x) \geq \tanh \alpha x$, for $x>0$. Integrating again yields $y(x) \geq \frac 1 \alpha \log \cosh \alpha x - \alpha \geq x - \alpha - \frac{\log 2}{\alpha}$.  So the increasing function $x-y(x)$ has a limit in the interval $(\alpha, \alpha + \frac{\log 2}{\alpha})$.

Therefore, the curve $X$ is asymptotic to a cone. Under the flow, it comes from infinity as $t \rightarrow -\infty$, and contracts to the union of the positive $\xi$-axis and the negative $\eta$-axis, as $t \rightarrow \frac 1 2$. Its finite length decreases under the flow.
\end{proof}

In \cite{eck97}, Ecker proved that for every space-like curve, there exists a solution to the flow starting at the curve which exists for all $t\geq0$. For the curve $X$, this solution must be different from the contracting solution, which develops a singularity at $t=\frac 1 2$. This non-uniqueness should not be too surprising, as the curvature of $X$ blows up on each end.

\section{Hyperbolic rotation}
\label{rotating}

Now suppose $a\neq0$, $b=0$, so by scaling and reflecting the curve if necessary, we may assume $a=1$.  

\begin{thm}
There are three types of curves which move forever under a hyperbolic rotation with unit angular velocity.
\end{thm}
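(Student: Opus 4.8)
The plan is to analyze the system of ODEs coming from Equation \eqref{RotScal} with $a=1$, $b=0$, namely
\begin{equation*}
\left\{
\begin{aligned}
\tau' &= 1 + \nu\tau \\
\nu' &= \tau^2,
\end{aligned} \right.
\end{equation*}
since here $k = a\tau - b\nu = \tau$. By Lemma \ref{FerillFenginn} every curve satisfying $k=\tau$ corresponds to a trajectory of this system, so counting the qualitatively distinct trajectories in the $(\tau,\nu)$-phase plane will give the count of curve types. First I would look for a conserved quantity, mimicking the expansion and contraction cases where $k^2 e^{\pm\langle X,X\rangle}$ was constant. Here I expect a first integral of the form $F(\tau,\nu)=\text{const}$; computing $\frac{d}{ds}$ of candidate expressions built from $\tau^2$ and $\nu$ (note $\nu' = \tau^2 = k^2$, so $\nu$ is monotone increasing) should produce the right invariant, and its level sets will partition the plane into the trajectory types.

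Next I would locate fixed points and use the symmetry $s\mapsto -(\tau(-s),\nu(-s))$ noted after \eqref{taunuODE2}. Setting $\tau'=\nu'=0$ forces $\tau=0$ and then $1+\nu\tau = 1 \neq 0$, so there are \emph{no} fixed points; this already distinguishes the rotation case from the expansion case (which had a saddle) and contraction case. Since $\nu$ is strictly increasing and $\tau$ can change sign, I would trace how trajectories behave: whether $\tau,\nu$ blow up in finite $s$ (giving Minkowski-finite ends with $k\to\infty$) or run off to infinity as $s\to\pm\infty$ (giving Minkowski-infinite ends), and whether each trajectory crosses the diagonals $\tau=\pm\nu$ (which by the displayed formulas for $\xi,\eta$ records crossings of the light-like axes). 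The level sets of the first integral should sort into exactly three qualitatively different shapes, yielding the claimed three types.

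To pin down the finer geometric features — convexity, asymptotics, and which ends are Minkowski-finite — I would, as in the previous sections, pass to the ODE for $y$ as a function of $x$ obtained from \eqref{yODE} with $a=1$, $b=0$, $C=0$:
\begin{equation*}
y''(x) = (1-y'(x)^2)(x - y(x)y'(x)).
\end{equation*}
Analyzing the sign of $y''$ and an auxiliary monotone quantity (analogous to the $w(x)$ used in the expansion and contraction proofs) should establish convexity regions and the asymptotic line behavior. The main obstacle I anticipate is that, unlike the pure expansion or contraction cases, the curvature $k=\tau$ here changes sign, so the curves need not be globally convex and the trajectory analysis must carefully separate the regions $\tau>0$ and $\tau<0$; keeping track of exactly how many inequivalent trajectory types survive after quotienting by the reflection symmetry, and correctly matching each to its end-behavior (finite versus infinite Minkowski length, limiting curvature, and light-like axis crossings), is where the bookkeeping is most delicate.
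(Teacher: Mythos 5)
Your setup agrees with the paper's: the same system \eqref{taunuODE2} with $k=\tau$, the observation that there are no fixed points, that $\nu$ is strictly increasing, and the use of the symmetry $s\mapsto-(\tau(-s),\nu(-s))$. But the engine of your argument---a conserved quantity $F(\tau,\nu)$ analogous to $k^2e^{\pm\langle X,X\rangle}$ from the expansion/contraction cases---does not exist, and this is a genuine gap. Indeed, suppose $F=G(\tau^2,\nu)$ were a first integral; writing $u=\tau^2$, one gets
\begin{equation*}
\frac{dF}{ds}=2\tau\,G_u+\tau^2\bigl(2\nu G_u+G_\nu\bigr),
\end{equation*}
and since $G_u,G_\nu$ are even in $\tau$ while the first term is odd, both parts must vanish separately, forcing $G$ constant. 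So no invariant ``built from $\tau^2$ and $\nu$'' can work. What is actually conserved in the rotation case is $\tau^2-\nu^2-2\theta$, which involves the hyperbolic tangent angle $\theta=\int\tau\,ds$ and therefore is \emph{not} a function on the $(\tau,\nu)$-phase plane; its level sets live in $(\tau,\nu,\theta)$-space and cannot partition the phase plane into trajectory types.

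What replaces the missing first integral in the paper is a nullcline/trapping argument. All trajectories in the right half-plane blow up forward in finite $s$; following one backwards, exactly three things can happen: it crosses the $\nu$-axis (then by the symmetry $\tau,\nu\to-\infty$ in finite $s$, giving a finite-length curve with an inflection point), it crosses the branch of the nullcline $\tau\nu=-1$ (then $\tau\to\infty$, $\nu\to-\infty$ in finite $s$, giving a convex finite-length curve with $k\to\infty$ at both ends), or it stays trapped between the $\nu$-axis and $\tau\nu=-1$, where $0<\tau'<1$ forces the trajectory to exist for all $s\to-\infty$ with $\tau\to0$, $\nu\to-\infty$ (a convex curve, Minkowski-finite in one direction and infinite in the other). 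That trichotomy is the content of the theorem. For the finer description, the paper also avoids your graph equation \eqref{yODE} over the $x$-axis (where the sign-changing curvature makes convexity arguments awkward) and instead uses the light-cone graph ODE $\xi''(\eta)=\xi'(\eta)(\eta\xi'(\eta)+\xi(\eta))$, which \emph{does} have a usable conserved quantity, $\xi'(\eta)e^{-\eta\xi(\eta)}$; normalizing it to $1$ by a hyperbolic rotation reduces the whole analysis to $\xi'=e^{\eta\xi}$. If you want to salvage your outline, this change of coordinates is where your hoped-for invariant actually lives.
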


The curves appear in Figure \ref{rotib}.

\begin{figure}[t]
	\centering
	\begin{subfigure}[t]{0.48\textwidth}
		\centering
		\includegraphics{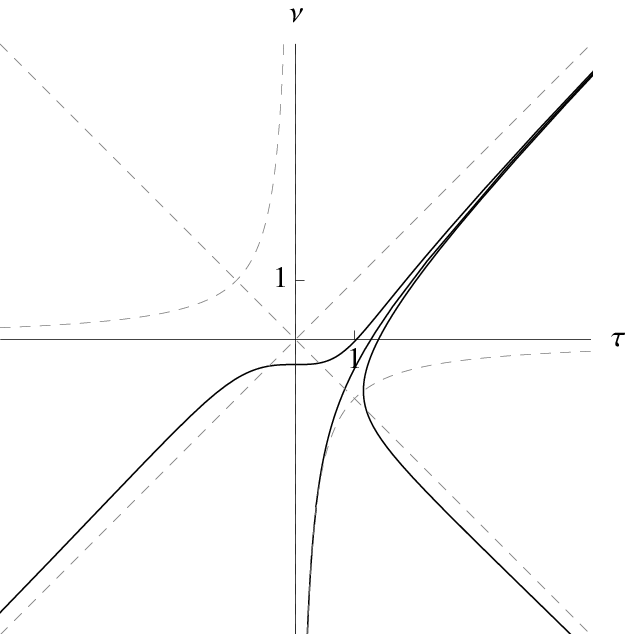}
		\caption{Phase portrait.}
		\label{rotia}
	\end{subfigure} 
	\quad
	\begin{subfigure}[t]{0.48\textwidth}
		\centering
		\includegraphics{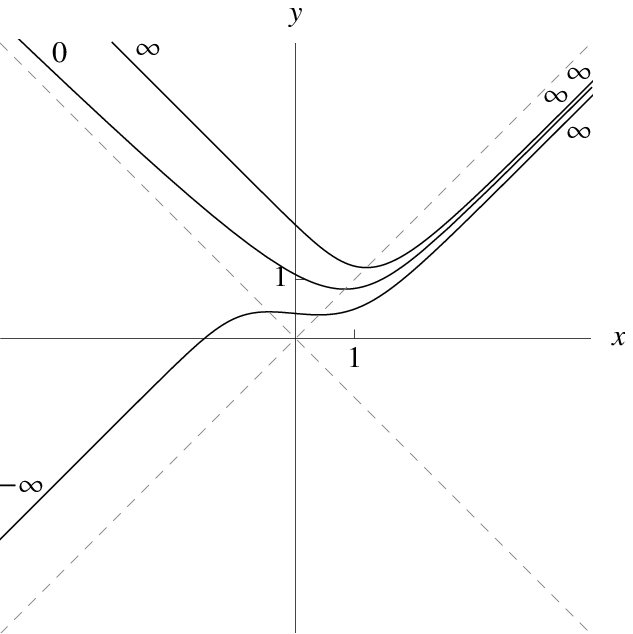}
		\caption{All three curve types.}
		\label{rotib}
	\end{subfigure}
	\caption{Hyperbolic rotation, $a=1$, $b=0$.}
	\label{roti}
\end{figure}

\begin{proof}
Here $k = \tau$ so the system of ODEs for $\tau$ and $\nu$ becomes
 \begin{equation*}
\label{Snuningshneppi}
\left\{
\begin{aligned}
\tau' &= 1 + \tau \nu \\
\nu'&= \tau^2.
\end{aligned}
\right.
\end{equation*}
The phase portrait appears in Figure \ref{rotia}.
The system has no fixed points and $\nu$ is increasing. Direct calculations show that the function $\tau^2-\nu^2-2\theta = \langle X, X \rangle - 2\theta$ is constant.
All trajectories in the right half of the phase plane have $\tau$ and $\nu$ going to $\infty$ for finite values of $s$. But when we follow a trajectory in the backwards direction, three things can happen:

The trajectory crosses the $\nu$-axis. Then, by symmetry, $\tau$ and $\nu$ go to $-\infty$ for a finite $s$. The corresponding curve $X$ has finite Minkowski-length with the curvature $k$ increasing from $-\infty$ to $\infty$, so the curve has an inflection point.

The trajectory crosses the curve $\tau \nu = -1$.  Then $\tau \rightarrow \infty$, $\nu \rightarrow -\infty$ for a finite $s$. The curve $X$ is convex, with finite Minkowski-length and $k\rightarrow \infty$ on each end.

The trajectory gets trapped between them the $\nu$-axis and $\tau\nu = -1$. Then $\tau \rightarrow 0$ and $\nu \rightarrow -\infty$, and this happens as $s\rightarrow -\infty$, since  $0 < \tau' < 1$. So the curve $X$ is convex, Minkowski-finite in one direction and Minkowski-infinite in the other, with $k$ increasing from $0$ to $\infty$.

To get a better description of the curves $X$, we look at the ODE for $\xi$ as a function of $\eta$,
\begin{equation*}
\xi''(\eta) = \xi'(\eta)(\eta\xi'(\eta)+\xi(\eta)).
\end{equation*}
Here the function $\xi'(\eta)e^{-\eta\xi(\eta)}$ is constant and by applying a hyperbolic rotation, we can assume the constant is $1$ (which corresponds to $\langle X, X \rangle = 2\theta$). So we are left with the ODE
\begin{equation*}
\xi'(\eta) = e^{\eta\xi(\eta)}.
\end{equation*}
It is easy to see that all solutions cross the $\xi$-axis. This corresponds to the fact that each $(\tau,\nu)$-trajectory crosses the line $\tau = -\nu$. By symmetry, it suffices to look at curves such that $\xi(0) \geq 0$. Each one of them blows up for a finite positive value of $\eta$. When we go in the negative $\eta$ direction, two things can happen:

The curve crosses the $\eta$ axis and goes to $-\infty$ for a finite negative value of $\eta$. It is then of the first kind in the trichotomy above. Under the flow, it goes from the $\eta$-axis, as $t\rightarrow -\infty$, to the $\xi$-axis, as $t \rightarrow \infty$.

The curve is bounded from below by a nonnegative number $L$. The curve is of the second kind when $L>0$, and of the third kind when $L=0$. Under the flow, it goes from the $\eta$-axis, as $t\rightarrow -\infty$, and out to infinity, as $t \rightarrow \infty$. 
\end{proof}

\section{Hyperbolic rotation and dilation}
\label{screwdilating}
Here we assume neither $a$ nor $b$ is $0$, so $X$ satisfies the full equation $k = a\tau - b\nu$. As we saw earlier, the screw-dilation factor is $((2bt+1)^{\frac{b+a}{2b}},(2bt+1)^{\frac{b-a}{2b}})$. The dilation is a contraction when $b<0$ and an expansion when $b>0$.

\begin{thm}
There are seven types of curves which move under a hyperbolic rotation and contraction, including the exact solution
\begin{itemize}
\item $\xi = (1-2t)\tanh \eta$, $t < \tfrac 1 2$, with $k = - \tan s$, $-\frac \pi 2 < s < \frac \pi 2$, at $t=0$.
\end{itemize}

There are eighteen types of curves which move under a hyperbolic rotation and expansion, including the exact solutions

 \begin{itemize}
 \item $\xi = (1+2t) \tan \eta$, $t > -\tfrac 1 2$, with $k = \tanh s$ at $t=0$,
\item $\xi = -(1+2t)\coth \eta$, $t > -\tfrac 1 2$, with $k = \coth s$, $s>0$, at $t=0$.
\end{itemize}
\end{thm}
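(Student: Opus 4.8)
The plan is to mirror the method of the previous sections: treat the autonomous system \eqref{taunuODE2} (equivalently its $(k,l)$-form $k' = a+kl$, $l' = k^2 - b$) as a phase portrait, read off the trajectory types, and extract the explicit solutions from the reduced ODEs. I would begin with the $(k,l)$-system, which is better suited to locating fixed points. Setting $k'=l'=0$ forces $k^2 = b$ and $kl = -a$, so when $b<0$ (contraction) there are no fixed points, while when $b>0$ (expansion) there are exactly two, at $(k,l) = (\pm\sqrt b,\, \mp a/\sqrt b)$. The Jacobian $\left(\begin{smallmatrix} l & k \\ 2k & 0\end{smallmatrix}\right)$ has determinant $-2k^2 = -2b < 0$ at each fixed point, so both are saddles. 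This is the structural reason the two dilation cases behave so differently and why the expanding case produces so many trajectory types.

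The exact solutions fall out of the degenerate case $a^2=b^2$. From the identity $k+hl=(a-hb)(\tau+h\nu)$, taking hyperbolic moduli gives $k^2 - l^2 = (a^2-b^2)\langle X,X\rangle$, so when $a^2=b^2$ every trajectory satisfies $k^2=l^2$; normalizing $a=b$ by a reflection across the $y$-axis one checks $l\equiv -k$, and the system collapses to the single equation $k' = a - k^2$. For $a=b=1$ this integrates to $k=\tanh s$ and $k=\coth s$, and for $a=b=-1$ to $k=-\tan s$, which are exactly the curvature functions claimed. Equivalently, setting $a=b$ in the $\xi$-ODE $\xi''(\eta)=\xi'(\eta)((a+b)\xi+(a-b)\eta\xi')$ kills the $(a-b)\eta\xi'$ term and leaves the $\eta$-autonomous equation $\xi''=2b\xi\xi'$, whose first integral is the Riccati equation $\xi'=b\xi^2+c$ with solutions $\tan$, $\coth$ (for $b=1$) and $\tanh$ (for $b=-1$). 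Since $a=b$ makes the screw-dilation factor $((2bt+1),1)$, the curve merely scales in $\xi$, yielding $\xi=(1+2t)\tan\eta$, $\xi=-(1+2t)\coth\eta$ and $\xi=(1-2t)\tanh\eta$.

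For the contraction case $b<0$ the decisive simplification is that $l'=k^2-b=k^2+\abs{b}>0$, so $l$ is strictly increasing along every trajectory (the analogue of ``$\tau$ increasing'' in Section \ref{shrinking}). Parametrizing by $l$ and tracking where each trajectory meets the nullcline $kl=-a$ (where $k'=0$) and the diagonals $k=\pm l$, which as observed earlier govern the crossings of the light-like axes, should yield a finite list of qualitatively distinct behaviours; after applying the symmetry $(\tau,\nu)\mapsto(-\tau,-\nu)$ I expect these to organize into the seven stated types.

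The main obstacle is the expanding case. Here $l'=k^2-b$ changes sign, there is no elementary first integral of the kind available when $a=0$ or $b=0$, and the two saddle points generate a rich separatrix structure. The plan is to trace the four separatrices issuing from the two saddles, decide which of them connect to one another, escape to infinity in finite $s$, or cross the diagonals, and show that this global picture undergoes precisely the trichotomy $a^2<b^2$, $a^2=b^2$, $a^2>b^2$ forced by the identity $k^2-l^2=(a^2-b^2)\langle X,X\rangle$. Each region cut out by the separatrices, together with the data of which light-like axes are crossed and the limiting values of $k$ on each end, then corresponds to one type of curve. The careful bookkeeping across the three regimes is what produces the eighteen types, and it is the step I expect to be the most delicate.
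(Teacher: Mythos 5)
Your proposal follows the paper's proof essentially step for step: the paper likewise treats $a^2=b^2$ first, integrating $\xi''=2b\xi\xi'$ to the Riccati equation $\xi'=b\xi^2+A$ and normalizing $A$ by a hyperbolic rotation to obtain the $\tanh$, $\tan$ and $\coth$ solutions with the stated curvatures, and then handles $a^2\neq b^2$ via the $(k,l)$-system, with no fixed points when $b<0$ (three backward behaviours in each regime $\beta<1$, $\beta>1$) and a saddle point when $b>0$, the counts $7=1+3+3$ and $18=2+8+8$ coming from this dichotomy. One bookkeeping caution: the paper's eight expanding types per regime count the four separatrix trajectories themselves as well as the four open families between them, so counting only the regions cut out by the separatrices, as your last paragraph suggests, would undercount.
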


\begin{figure}[t]
	\centering
	\begin{subfigure}[t]{0.48\textwidth}
		\centering
		\includegraphics{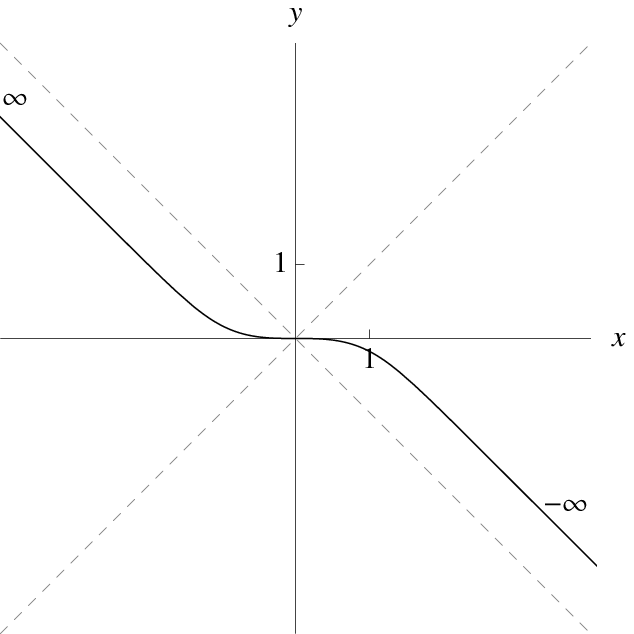}
		\caption{$\xi = (1-2t)\tanh \eta$.}
		\label{inn}
	\end{subfigure}
	\quad
	\begin{subfigure}[t]{0.48\textwidth}
		\centering
		\includegraphics{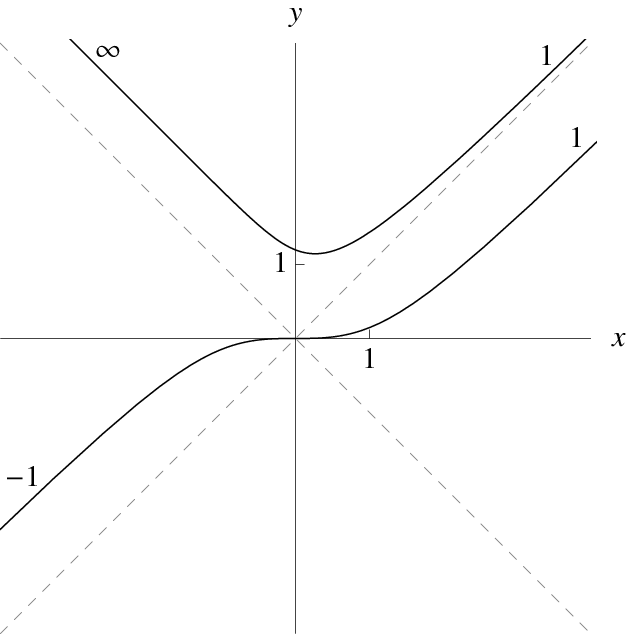}
		\caption{$ \xi = (1+2t) \tan \eta$, \quad $\xi = -(1+2t)\coth \eta$.}
		\label{ut}
	\end{subfigure}
	\caption{Hyperbolic rotation and dilation, $a=b=\mp1$.}
\end{figure}

The curves appear in Figures \ref{inn}, \ref{rotcontrasi1b}, \ref{rotcontrasi2b}, and Figures  \ref{ut}, \ref{rotexpansi1b},  \ref{rotexpansi1c},  \ref{rotexpansi2b}, \ref{rotexpansi2c}, respectively.

\begin{proof}

We start with the special case $a^2=b^2$, where the screw-dilation is just a linear dilation in one of the variables $\xi$ and $\eta$. By scaling the curve and possibly reflecting it across the $y$-axis, it suffices to consider $a=b=\pm 1$. The screw-dilation factor is $(2bt+1,1)$, so under the flow the curve simply scales linearly in the $\xi$-variable. The curves we find below are unique, up to a hyperbolic rotation and a translation in $\eta$.

Assume $a=b=-1$, i.e., dilation towards the $\eta$-axis. Then $k = -\tau+\nu$ and the ODE for $\xi$ as a function of $\eta$ is
\begin{equation*}
\xi''(\eta) = -2 \xi(\eta) \xi'(\eta).
\end{equation*}
This gives $\xi'(\eta) = -\xi(\eta)^2+A$, and since $\xi(\eta)$ is increasing, we must take $A>0$. By applying a hyperbolic rotation, we can assume $A=1$.
The solution is $\xi = \tanh \eta$, shown in Figure \ref{inn}. This curve has finite Minkowski-length $\pi$ (which decreases under the flow) and $k = - \tan s$, $-\frac \pi 2 < s < \frac \pi 2$ . The solution to the flow is
\begin{equation*}
\xi = (1-2t)\tanh \eta, \quad t < \tfrac 1 2.
\end{equation*}
It goes from the $\xi$-axis, as $t\rightarrow-\infty$, to the $\eta$-axis, as $t\rightarrow \frac 1 2$.

Assume $a=b=1$, i.e., dilation away from the $\eta$-axis. Then $k = \tau - \nu$ and the ODE for $\xi$ as a function of $\eta$ is
\begin{equation*}
\xi''(\eta) = 2 \xi(\eta) \xi'(\eta).
\end{equation*}
This gives $\xi'(\eta) = \xi(\eta)^2+A$. By applying a hyperbolic rotation, it suffices to look at $A=1$, $A=0$ and $A=-1$.

When $A=0$, we just get the space-like unit hyperbola $\xi = -\frac 1 \eta$, and the motion is the same as before (since the hyperbolic rotation is vacuous).

 When $A=1$, the solution is $\xi = \tan \eta$, shown in Figure \ref{ut}. It has infinite Minkowski-length and $k = \tanh s$. The solution to the flow is
 \begin{equation*}
 \xi = (1+2t) \tan \eta, \quad t > -\tfrac 1 2.
 \end{equation*}
 It goes from the broken line $\{\xi \leq 0, \eta = -\frac \pi 2\}\cup\{\xi = 0, -\frac \pi 2 \leq \eta \leq \frac \pi 2\}\cup\{\xi \geq 0, \eta = \frac \pi 2\}$, as $t\rightarrow -\frac 1 2$, to the $\xi$-axis, as $t \rightarrow \infty$.

When $A=-1$, the solution is $\xi = -\coth \eta$, $\eta < 0$, (or its reflection across the origin), shown in Figure \ref{ut}. This curve is Minkowski-finite in one direction and Minkowski-infinite in the other, with $k = \coth s$, $s>0$. The solution is
\begin{equation*}
\xi = -(1+2t)\coth \eta, \quad t > -\tfrac 1 2.
\end{equation*}
It goes from the broken line $\{\xi=0, \eta \leq 0\}\cup\{\xi \geq 0, \eta =0\}$, as $t\rightarrow -\frac 1 2$, and out to infinity, as $t \rightarrow \infty$.\\

\begin{figure}[t]
	\centering 
	\begin{subfigure}[t]{0.48\textwidth}
		\centering
		\includegraphics{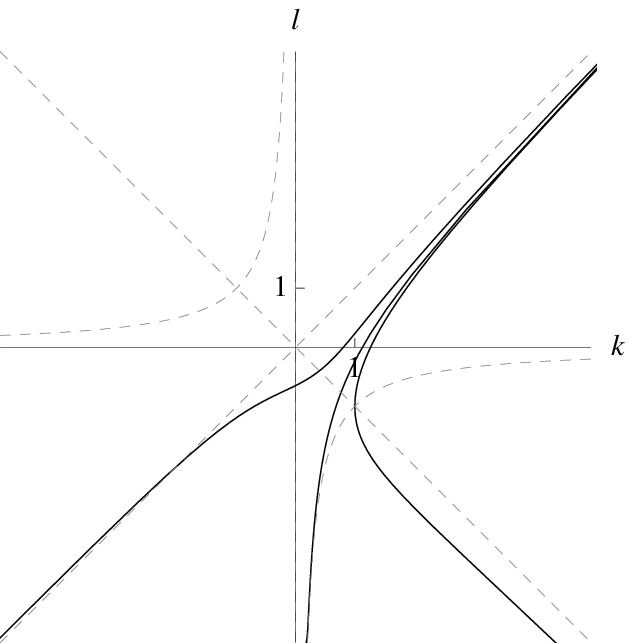}
		\caption{Phase portrait.}
		\label{rotcontrasi1a}
	\end{subfigure}
	\quad
	\begin{subfigure}[t]{0.48\textwidth}
		\centering
		\includegraphics{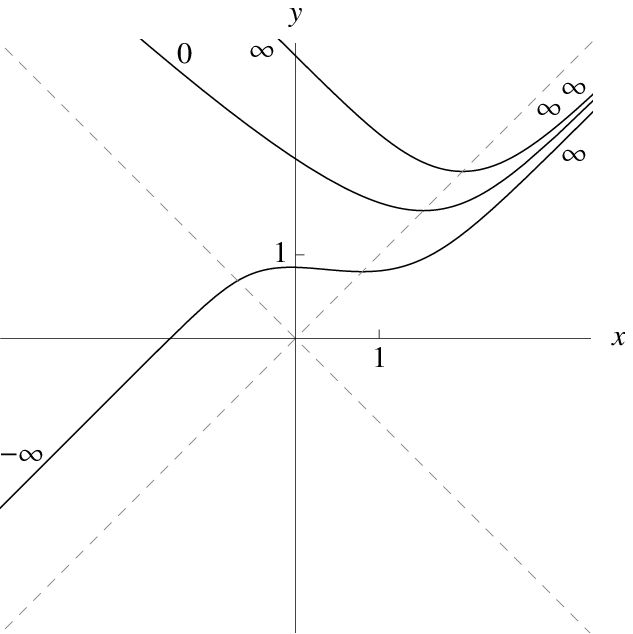}
		\caption{All three curve types.}
		\label{rotcontrasi1b}
	\end{subfigure}
	\caption{Hyperbolic rotation and contraction, $a=1$, $b=-\frac 1 2$.}
	\label{rotcontrasi1}
\end{figure}

Now consider the case where $a^2\neq b^2$.  Here we will for simplicity work with the system in $k$ and $l$, i.e., 
\begin{equation}
\label{klODE}
\left\{
\begin{aligned}
k' &= a + kl \\
l' &= -b + k^2,
\end{aligned} \right.
\end{equation}
where $k+hl=(a-hb)(\tau+h\nu)$.
Note that the ODE for $\xi$ as a function of $\eta$ can also be written as
\begin{equation}
\label{saur}
\frac{d}{d\eta}\left( \xi(\eta)\eta^{\frac{a+b}{a-b}} \right) = \frac{\eta^\frac{2b}{a-b}}{a-b}\frac{\xi''(\eta)}{\xi'(\eta)}.
\end{equation}
We already saw a special case of this for rotating curves ($b=0$).

By scaling and reflecting the curve, we may assume $a=1$. We look separately at the cases where $b<0$ and $b>0$.

\begin{figure}[t]
	\centering
	\begin{subfigure}[t]{0.48\textwidth}
		\centering
		\includegraphics{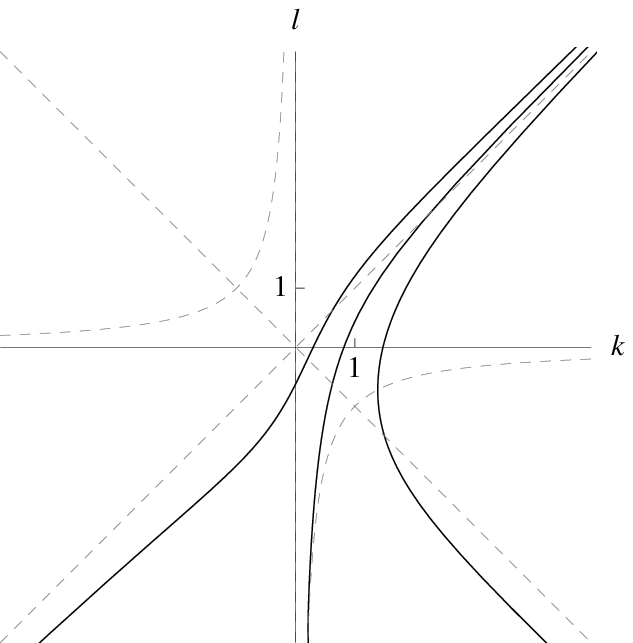}
		\caption{Phase portrait.}
		\label{rotcontrasi2a}
	\end{subfigure}
	\quad
	\begin{subfigure}[t]{0.48\textwidth}
		\centering
		\includegraphics{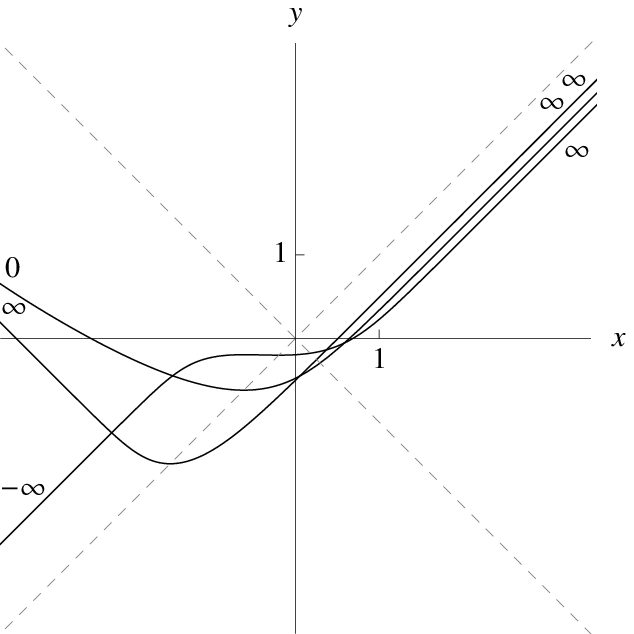}
		\caption{All three curve types.}
		\label{rotcontrasi2b}
	\end{subfigure}
	\caption{Hyperbolic rotation and contraction, $a=1$, $b=- 2$.}
	\label{rotcontrasi2}
\end{figure}
First assume $b<0$ and write $b = -\beta^2$ where $\beta>0$. The system \eqref{klODE} then takes the form
\begin{equation*}
\left\{
\begin{aligned}
k' &= 1 + kl \\
l' &= \beta^2 + k^2.
\end{aligned} \right.
\end{equation*}
The phase portrait appears in Figures \ref{rotcontrasi1a}, \ref{rotcontrasi2a}.
This system behaves similarly to the $(\tau,\nu)$-system for the rotating curves, which is of course nothing but the special case  $\beta=0$. 
There are no fixed points and $l$ is strictly increasing. Each trajectory in the right half-plane ($k>0$) has $k$ and $l$ going to $\infty$ for a finite value of $s$. When we follow a trajectory in the backwards direction, three things can happen:

\begin{figure}[t]
	\centering
	\begin{subfigure}{0.8\textwidth}
		\centering
		\includegraphics{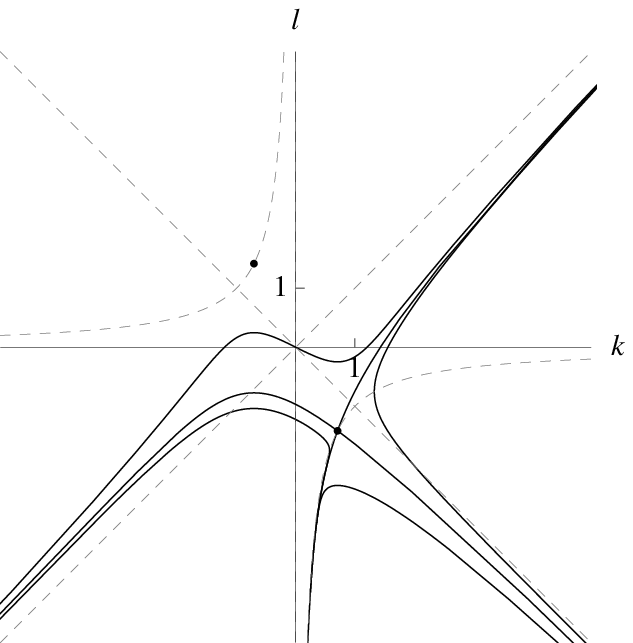}
		\caption{Phase portrait.}
		\label{adam1}
		\vspace{0.1in}
	\end{subfigure}
	\begin{subfigure}[t]{0.48\textwidth}
		\centering
		\includegraphics{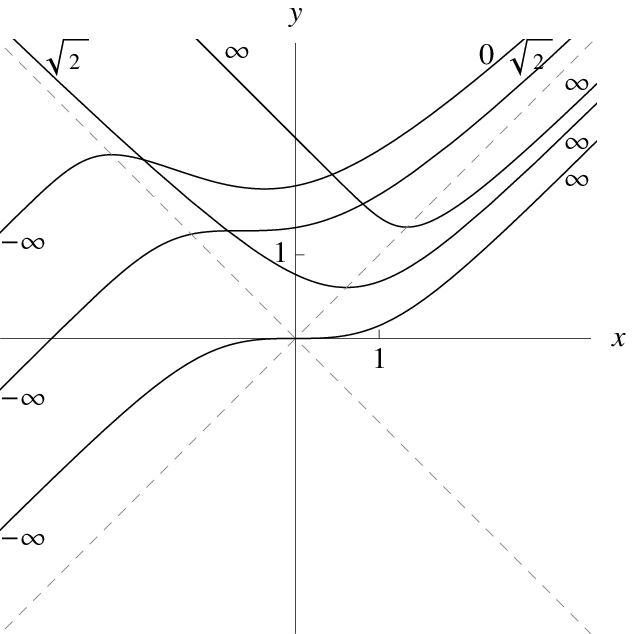}
		\caption{Five curve types.}
		\label{rotexpansi1b}
	\end{subfigure} 
	\quad
	\begin{subfigure}[t]{0.48\textwidth}
		\centering
		\includegraphics{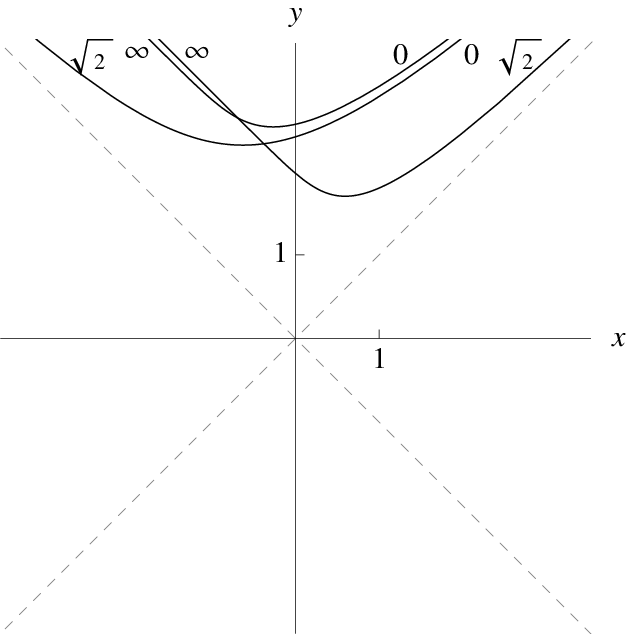}
		\caption{Three curve types.}
		\label{rotexpansi1c}
	\end{subfigure}
	\caption{Hyperbolic rotation and expansion, $a=1$, $b=\tfrac 1 2$.}
	\label{rotexpansi1}
\end{figure}

The trajectory crosses the $l$-axis. Then, by symmetry, $k$ and $l$ go to $-\infty$ for a finite $s$. The corresponding curve $X$ has finite Minkowski-length (which decreases under the flow) with the curvature $k$ increasing from $-\infty$ to $\infty$, so the curve has an inflection point.

The trajectory crosses the curve $kl = -1$.  Then $k \rightarrow \infty$, $l \rightarrow -\infty$ for a finite $s$. The curve $X$ is convex with finite Minkowski-length and $k\rightarrow\infty$ on each end.

The trajectory gets trapped between the $l$-axis and $kl = -1$. Then $k \rightarrow 0$ and $l \rightarrow -\infty$, and this happens as $s\rightarrow -\infty$, since  $0 < k' < 1$. So the curve $X$ is convex, Minkowski-finite in one direction and Minkowski-infinite in the other, with $k$ increasing from $0$ to $\infty$.

\begin{figure}[t]
	\centering
	\begin{subfigure}[t]{0.8\textwidth}
		\centering
		\includegraphics{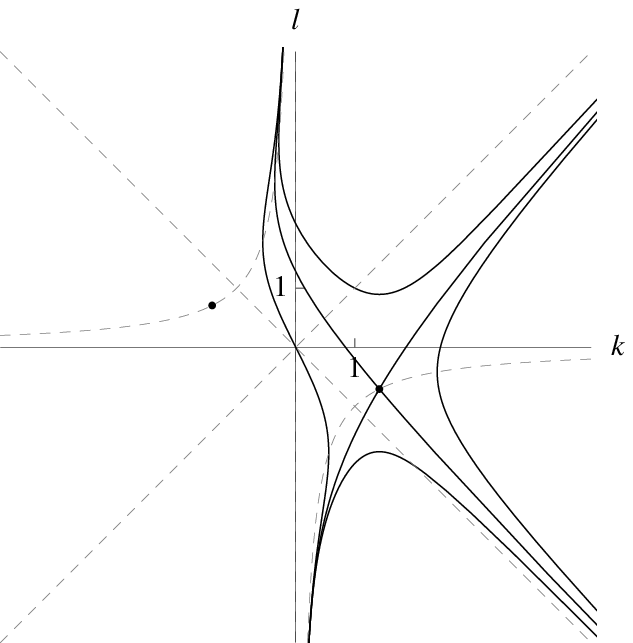}
		\caption{Phase portrait.}
		\label{adam2}
		\vspace{0.1in}
	\end{subfigure}
	\begin{subfigure}[t]{0.48\textwidth}
		\centering
		\includegraphics{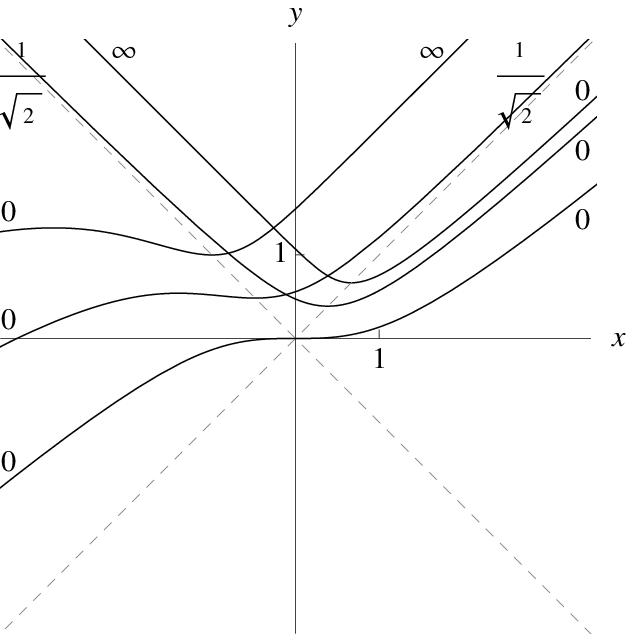}
		\caption{Five curve types.}
		\label{rotexpansi2b}
	\end{subfigure}
	\quad
	\begin{subfigure}[t]{0.48\textwidth}
		\centering
		\includegraphics{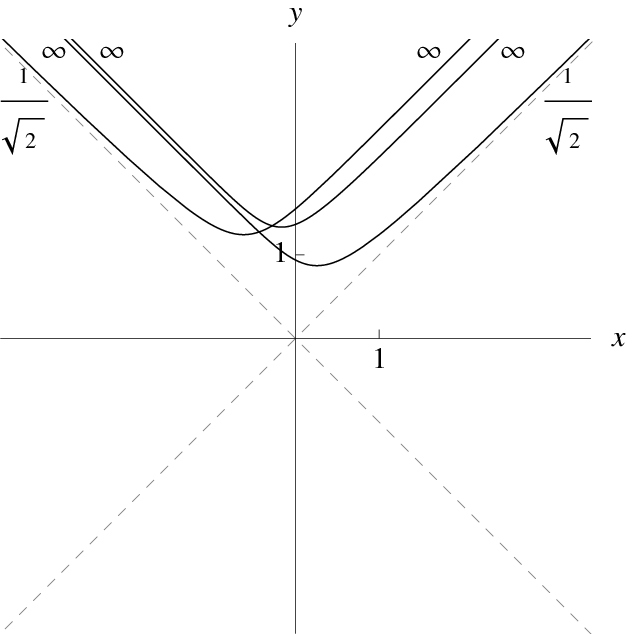}
		\caption{Three curve types.}
		\label{rotexpansi2c}
	\end{subfigure}
	\caption{Hyperbolic rotation and expansion, $a=1$, $b=2$.}
	\label{rotexpansi2}
\end{figure}

As in the case of rotating curves, each of these trajectories crosses the line $k = -l$, which corresponds to $X$ crossing the $\xi$-axis. When $\beta >1$, each trajectory also crosses the line $k=l$, corresponding to $X$ crossing the $\eta$-axis. However, when $\beta < 1$, this only happens to the trajectories of the first kind above, just like in the case of rotating curves. This difference should not be too surprising, considering that the screw-dilation factor has the form $\left((1-2\beta^2t)^\frac{\beta^2-1}{2\beta^2},(1-2\beta^2t)^\frac{\beta^2+1}{2\beta^2}\right)$, so the behaviour of the $\xi$-component is quite different for $\beta<1$ and $\beta > 1$. The curves can be seen in Figures \ref{rotcontrasi1b}, \ref{rotcontrasi2b}.

Now, assume $b>0$ and write $b = \beta^2$ where $\beta > 0$. Then the system takes the form
\begin{equation*}
\left\{
\begin{aligned}
k' &= 1 + kl \\
l' &=  k^2 - \beta^2.
\end{aligned} \right.
\end{equation*}
In the right half of the phase plane, we have a fixed point $(\frac 1 \beta, -\beta)$, which corresponds to $X$ being the expanding hyperbola, making the rotation vacuous. The fixed point is a saddle point. There are two trajectories coming into it, two coming out of it, and between these four trajectories we have four families of trajectories, as can be seen in the phase portraits in Figures \ref{adam1}, \ref{adam2}. So there are eight types of curves (excluding the hyperbola). As before, the curves show different behaviour depending on whether $\beta <1$ or $\beta > 1$. Instead of describing each of these sixteen curve types in detail, we refer to Figures \ref{rotexpansi1b}, \ref{rotexpansi1c}, \ref{rotexpansi2b}, \ref{rotexpansi2c}.  We just mention that by \eqref{saur}, when $\beta >1$, the function $\xi(\eta)\eta^{-\frac{\beta^2+1}{\beta^2-1}}$ is decreasing for $\eta >0$, preventing $\xi$ from blowing up for a finite positive $\eta$.
\end{proof}

\section{Hyperbolic rotation, dilation and translation}
\label{screwdilatingtranslating}

The only self-similar motion under the flow that remains to be investigated is the one with $a=b\neq0$ and $C = (0,c)$, i.e., dilation with factor $(2bt+1)$ in the $\xi$-variable and translation of $\frac{c}{2b}\log(2bt+1)$ in the $\eta$-variable. By scaling the curve if necessary, we may assume $|b|=1$. By applying a hyperbolic rotation to $X$ and reflecting it across the origin if necessary, we can assume $C=(0,1)$.

\begin{thm}
There are eight types of curves that move under a combination of hyperbolic rotation, dilation and translation. Five of them are expanding and three contracting, given respectively by the equations
\begin{equation*}
\eta = \pm \frac 1 2 \int\frac{e^\xi}{e^{\xi}\xi-e^\xi+A}d\xi,
\end{equation*}
for different values of the constant $A$.
\end{thm}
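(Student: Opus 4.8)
The plan is to exploit the fact that in this last case the curve equation is genuinely autonomous, so that a single order reduction produces the stated integral. Recall from the discussion opening this section that the remaining motion corresponds to $a=b$ and $C=(0,1)$, and that after scaling we may take $a=b=\pm1$, with $+$ giving expansion and $-$ contraction. Feeding $a=b$, $d_1=0$, $d_2=1$ into the ODE for $\xi$ as a function of $\eta$ annihilates the troublesome term $(a-b)\eta\xi'$ and leaves
\[
\xi''(\eta)=\xi'(\eta)\bigl(2a\,\xi(\eta)-\xi'(\eta)\bigr),
\]
in which $\eta$ does not appear explicitly. This is precisely why the translation can no longer be absorbed into a shift of the origin (the factor $b+ha$ is a zero divisor), yet it is also what makes the equation integrable.

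First I would reduce the order. Writing $p=\xi'$ and regarding $p$ as a function of $\xi$, the identity $\xi''=p\,\tfrac{dp}{d\xi}$ turns the equation, away from the light-like locus $p=0$, into the first-order linear ODE
\[
\frac{dp}{d\xi}+p=2a\,\xi .
\]
Multiplying by the integrating factor $e^{\xi}$ and using $\int \xi e^{\xi}\,d\xi=(\xi-1)e^{\xi}$ gives $p\,e^{\xi}=\pm2\bigl(\xi e^{\xi}-e^{\xi}+A\bigr)$, the sign matching that of $a$ and $A$ denoting the constant of integration. Since the curve is space-like exactly where $p=\xi'>0$, I then invert, $\tfrac{d\eta}{d\xi}=1/p$, and integrate once more to obtain
\[
\eta=\pm\frac12\int\frac{e^{\xi}}{\xi e^{\xi}-e^{\xi}+A}\,d\xi,
\]
which is the asserted family, with the $+$ sign for the expanding curves ($a=1$) and the $-$ sign for the contracting ones ($a=-1$).

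It remains to count the qualitatively distinct curves. I would study the denominator $g(\xi)=(\xi-1)e^{\xi}+A$, whose derivative $g'(\xi)=\xi e^{\xi}$ shows that $g$ has a unique global minimum $g(0)=A-1$, decreases to the limit $A$ as $\xi\to-\infty$, and increases to $+\infty$ as $\xi\to+\infty$. Each maximal space-like arc is a connected component of $\{g>0\}$ in the expanding case and of $\{g<0\}$ in the contracting case, so the number and shape of the arcs is governed by how the graph of $g$ meets the $\xi$-axis. The relevant thresholds are $A=1$ (the minimum value crossing zero) and $A=0$ (the left-hand limit crossing zero); on each resulting range of $A$ I would read off the end behavior from the Minkowski length $\int d\xi/\sqrt{p}$ and the curvature $k=(2a\xi-p)/(2\sqrt{p})$ supplied by \eqref{xijofnur}. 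For expansion this bookkeeping gives five types: the everywhere-space-like curve with one inflection ($A>1$), the pair of arcs asymptotic to the hyperbola with $k\to\pm1$ produced at the threshold $A=1$, and the two convex arcs into which $\{g>0\}$ splits for $0<A<1$ (one of which persists as a single type for $A\le0$). For contraction, $\{g<0\}$ is always a single interval, but its boundary behavior changes at $A=0$, yielding exactly the three types $0<A<1$, $A=0$, $A<0$ (while $A\ge1$ gives no space-like curve).

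The main obstacle is not the integration, which is routine, but the boundary cases in the classification. At $A=1$ the denominator acquires a \emph{double} zero at $\xi=0$, and the delicate point is to show that this does not produce a Minkowski-finite end in the way a simple zero does: near a double zero $p\sim\xi^{2}$, so $\int d\xi/\sqrt{p}$ diverges and $k=(2a\xi-p)/(2\sqrt{p})\to\pm1$, meaning the arc escapes to infinity asymptotic to the unit hyperbola rather than terminating. Establishing this behavior, together with the transition at $A=0$, where one end migrates from a finite value of $\xi$ out to $-\infty$ and switches between Minkowski-finite and Minkowski-infinite, is what distinguishes genuinely new types from mere one-parameter deformations within a single type, and is where the precise counts five and three are earned.
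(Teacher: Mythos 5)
Your proposal is correct and follows essentially the same route as the paper: the same first integral $e^{\xi}\bigl(\tfrac12\xi'-\xi+1\bigr)=A$ (which the paper exhibits by direct differentiation rather than via your $p(\xi)$ order reduction, an equivalent computation), the same separation of variables yielding $\eta=\pm\tfrac12\int e^{\xi}/\bigl(e^{\xi}\xi-e^{\xi}+A\bigr)\,d\xi$, and the same classification by the roots of $(\xi-1)e^{\xi}+A$ with thresholds $A=0$ and $A=1$, identifying types by curvature limits to get five expanding and three contracting. The only cosmetic difference is that the paper treats just $a=b=1$ and obtains the contracting curves as reflections across the $\xi$-axis of the time-like solutions ($\xi'<0$), whereas you handle $a=b=-1$ directly through the sign in the reduced ODE; both amount to taking the components of $\{g>0\}$ versus $\{g<0\}$.
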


These curves appear in Figures  \ref{allir1} and \ref{allir2}.

\begin{proof}

 Instead of looking separately at the cases $b=1$ and $b=-1$, we will only consider $b=1$ and find both the space-like and time-like curves that satisfy the corresponding equation. Reflecting the time-like curves across the $\xi$-axis then gives the space-like curves with $a=b=-1$ and $C = (0,1)$.

Now, the ODE for $\xi$ as a function of $\eta$ is
\begin{equation*}
\xi''(\eta) = \xi'(\eta)(2\xi(\eta)-\xi'(\eta)).
\end{equation*}
Direct calculations show that the function $e^{\xi}(\tfrac{1}{2}\xi'-\xi+1)$ is constant. If we call the constant $A$, we have
\begin{equation}
\label{klobbi}
\xi' = 2\xi-2+2Ae^{-\xi}.
\end{equation}
The phase portrait of $(\xi,\xi')$ can be seen in Figure \ref{xiximerkt}. By separation of variables, the curves are given by the integral
\begin{equation}
\label{krunk}
\eta = \frac 1 2 \int\frac{e^\xi}{e^{\xi}\xi-e^\xi+A}d\xi.
\end{equation}
We divide the values of $A$ into a few cases, depending on the number of zeros in the denominator.
 \begin{figure}[t]
 	\centering
	\begin{subfigure}[t]{0.8\textwidth}
		\centering
		\includegraphics{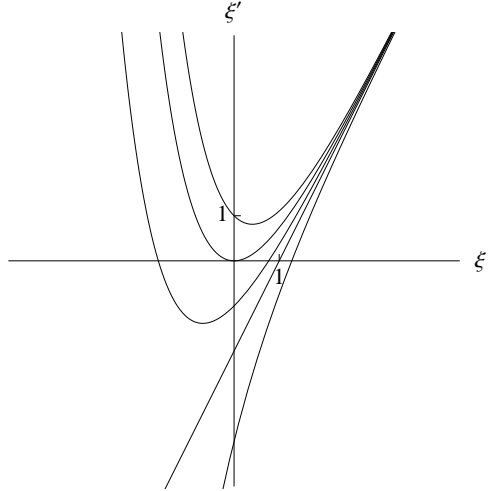}
		\caption{Phase portrait of $(\xi,\xi')$, showing the trajectories for $A=\frac 3 2$, $A=1$, $A=\frac 1 2$, $A=0$ and $A=-1$ (in order of decreasing $\xi'$-intercept).}
		\label{xiximerkt}
			\vspace{0.1in}
	\end{subfigure}
	\begin{subfigure}[t]{0.48\textwidth}
		\centering
		\includegraphics{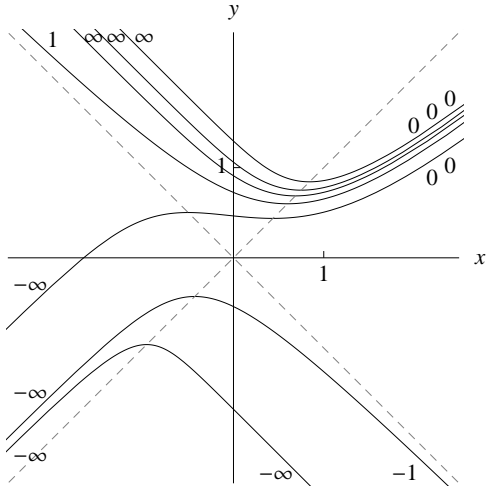}
		\caption{In order of decreasing $y$-intercept,\\ $A=-1, 0, \frac 1 2, 1, \frac 3 2, 1, \frac 1 2$.}
		\label{allir1}
	\end{subfigure}
	\quad
	\begin{subfigure}[t]{0.48\textwidth}
		\centering
		\includegraphics{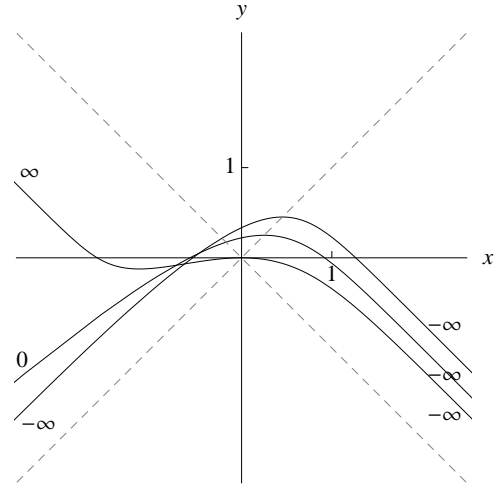}
		\caption{In order of decreasing $y$-intercept,\\ $A=-1, 0, \frac1 2$.}
		\label{allir2}
	\end{subfigure}
	\caption{Hyperbolic rotation, dilation and translation, $C=h_-$ and $a=b=\pm1$ on the left and right, respectively.}
\end{figure}

\begin{itemize}
\item When $A\leq0$, the denominator has a single root. We get a convex space-like curve and a convex time-like curve. 

\item When $0<A<1$, the denominator has two single roots. We get two convex space-like curves and a time-like curve with an inflection point.

\item When $A=1$, the denominator has a double root. We get two convex space-like curves.

\item When $A>1$, the denominator has no roots. We get a space-like curve with an inflection point.
\end{itemize}
The curvature is given by
\begin{equation*}
k = \frac{1-Ae^{-\xi}}{(2\xi-2+2Ae^{-\xi})^{\frac 1 2}}
\end{equation*}
which follows from \eqref{xijofnur} and \eqref{klobbi}. The three curves at the top Figure \ref{allir1} have the same curvature limits so we consider them as being of the same type.
\end{proof}

In the special case $A=0$, the curves are the space-like $\xi = e^{2\eta}+1$ and the time-like $\xi = -e^{2\eta}+1$, which after reflection becomes the space-like $\xi = -e^{-2\eta}+1$. Note that in Section \ref{translating} we saw that the curve $\xi = e^{2\eta}+1$ translates with constant velocity vector $2h_+$ under the flow. For this curve, the two self-similar motions turn out to be the same (up to reparametrization), although one of them is defined only for $t > -\frac 1 2$ and the other one for all $t \in \mathbf R$. Similarly, the self-similar motion of $\xi = -e^{-2\eta}+1$ is identical (up to reparametrization) to the translation with constant velocity vector $-2h_+$, even though the former is only defined for $t < \frac 1 2$. The reason for this is explained in the Appendix.

The space-like solution when $A=0$ comes out of the smooth curve $\xi = e^{2\eta}$ at time $t=-\frac 1 2$. Something similar holds for other values of $A$. Let $\lambda_A$ be the biggest root in the denominator of \eqref{krunk} (or $\lambda_A = 0$ if $A > 1$) and take any $\mu_A > \lambda_A$. Then we have a space-like solution which at time $t$ is given by
\begin{equation*}
\eta = \tfrac 1 2 \log (2t+1) +\tfrac 1 2 \int_{\mu_a}^{\frac{\xi}{2t+1}}\frac{e^u}{e^u u-e^u+A}du, \quad \xi > (2t+1)\lambda_A.
\end{equation*}
By direct calculations, it can be shown that for each $\xi > 0$, the right hand side converges to $\frac 1 2 \log \xi + C(A, \mu_A)$,  as $t \rightarrow -\frac 1 2$, where $C$ is a constant that we can get rid of by translating the curve in the $\eta$-variable. Hence these solutions also come out of the curve $\xi = e^{2 \eta}$ at time $ t = -\frac 1 2$. So for each $A$, there is a different self-similar solution to the flow coming out of the space-like curve $\xi = e^{2\eta}$, the one with $A=0$ coinciding with the translating solution. This non-uniqueness of the flow should not be too surprising, as this curve is Minkowski-finite in one direction where its curvature blows up.

\section{Other exact solutions}
\label{other}

We conclude this paper with the derivation of five simple exact solutions to the flow that are not self-similar.

Recall that by Lemma \ref{transformers}, if $y(x,t)$ is a solution in the Euclidean plane which is analytic and even in $x$, then $\hat y(x,t) = y(ix,-t)$ is a solution in the Minkowski plane, where $i^2=-1.$ We have already seen examples of this. The contracting circle in the Euclidean plane,
\begin{equation*}
x^2 + y^2 = -2t, \quad t <0,
\end{equation*}
transforms into the expanding space-like hyperbola in the Minkowski plane,
\begin{equation*}
-x^2  + y^2 = 2t, \quad t > 0.
\end{equation*}
The downward translating Grim Reaper in the Euclidean plane,
\begin{equation*}
\cos x = e^{y+t}, \quad t \in \mathbf R,
\end{equation*}
becomes the space-like upward translating solution in the Minkowski plane, that we found in Section \ref{translating},
\begin{equation*}
\cosh x = e^{y-t}, \quad t \in \mathbf R.
\end{equation*}

The same method works if the Euclidean solution is analytic and odd in $x$ if we are able to cancel out the extra $i$ factor that we get. In the presence of a factor of $e^t$, this can be done by a complex translation in $t$, since $e^{i\frac \pi 2}=i$. As an example, note that by translating the Grim Reaper in the $x$ variable, we can write it as
\begin{equation*}
\sin x = e^{y+t}, \quad t \in \mathbf R.
\end{equation*}
If we replace $x$ by $ix$ and $t$ by $-t + i \pi/2$ in this equation, we get the time-like upward translating solution in the Minkowski plane from Section \ref{translating},
\begin{equation*}
\sinh x = e^{y-t}, \quad t \in \mathbf R.
\end{equation*}

Now, in the Euclidean plane, two examples are known of exact solutions to the flow which are not self-similar. They are Angenent's oval/paperclip \cite{ang92},
\begin{equation*}
\cos x = e^t \cosh y, \quad t < 0,
\end{equation*}
and its sister solution, the Grim Reaper wave  \cite{broadvass11,king00},
\begin{equation*}
\cos x = e^t \sinh y, \quad t\in \mathbf R,
\end{equation*}
which can be seen in Figure \ref{kleina}.

\begin{figure}[t]
	\centering
	\begin{subfigure}[t]{0.34\textwidth}
		\centering
		\includegraphics{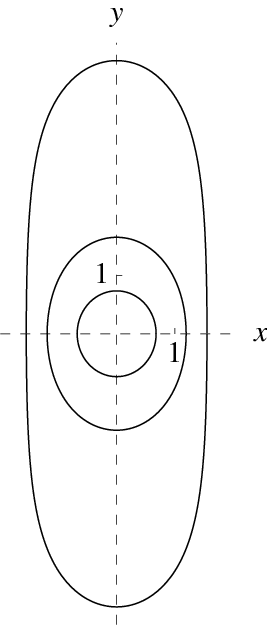}
		\caption{Angenent's oval, $\cos x = e^t \cosh y$, at $t=-4, -1,-\frac 1 4$.}
	\end{subfigure} 
	\quad\quad
	\begin{subfigure}[t]{0.56\textwidth}
		\centering
		\includegraphics{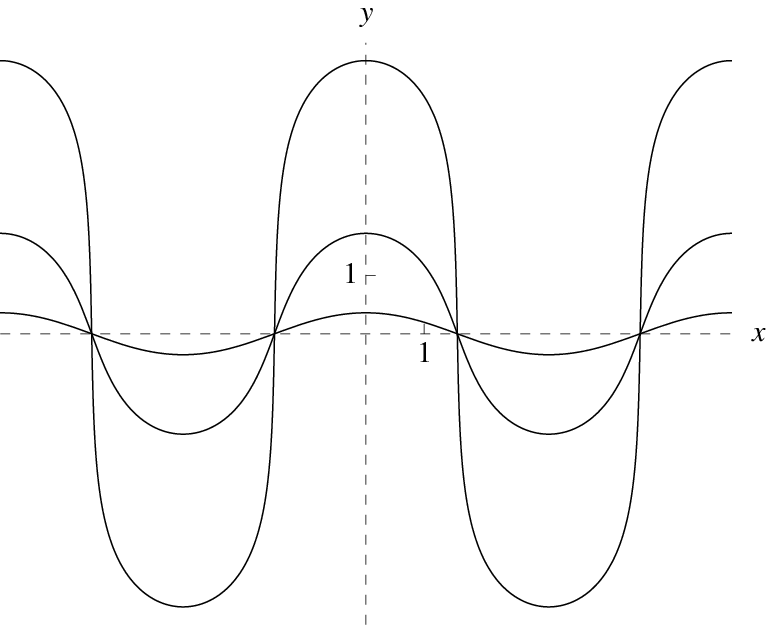}
		\caption{The Grim Reaper wave, $\cos x = e^t \sinh y$, at $t=-4, -1, 1$.}
	\end{subfigure}
	\caption{Exact non-self-similar solutions in the Euclidean plane.}
	\label{kleina}
\end{figure}

By applying the transformations to these solutions (i.e., replacing $\cos x$ with either $\cosh  x$ or $\sinh x$ and swapping the sign of $t$), we get three exact space-like solutions in the Minkowski plane. The fourth one is obtained by first reflecting Angenent's oval across $y=x$, then replacing $\cosh x$ with $\cos x$ and swapping the sign of $t$, and finally translating it by $\frac \pi 2$ along each axis (for aesthetic reasons).

\begin{thm}
The following are space-like solutions to the MCF in the Minkowski plane:
\begin{itemize}
\item $\cosh x = e^{-t} \cosh y, \quad t >0$,
\item $\cosh x = e^{-t} \sinh y, \quad t \in \mathbf R$,
\item $\sinh x = e^{-t} \sinh y, \quad t<0$,
\item $\sin y = e^{-t} \sin x, \quad t >0$.
\end{itemize}
\end{thm}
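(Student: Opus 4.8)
The plan is to verify each of the four equations directly as a solution to the PDE \eqref{xiPDE} (or equivalently \eqref{yPDE}, whichever is better adapted to the given equation), rather than re-deriving them from the transformation lemmas. The transformation lemmas motivate the candidate solutions, but once we have an explicit formula, it is cleanest to simply substitute and check. For a solution presented implicitly as $F(x,y,t)=0$, I would use implicit differentiation: compute $y_x$ and $y_{xx}$ from $F$, compute $y_t$, and confirm that $y_t(1-y_x^2)=y_{xx}$. For instance, for $\cosh x = e^{-t}\cosh y$ I would write $G(x,y,t) = \cosh x - e^{-t}\cosh y = 0$, so that $y_x = -G_x/G_y = \sinh x/(e^{-t}\sinh y)$ and $y_t = -G_t/G_y = \cosh y/\sinh y = \coth y$, and then check the PDE holds identically on the solution set.

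The key computational simplification is that on the solution set each equation satisfies an algebraic relation that eliminates $t$: from $\cosh x = e^{-t}\cosh y$ we have $e^{-t} = \cosh x/\cosh y$, and similarly for the others. Using this to rewrite $y_x$ purely in terms of $x$ and $y$, I expect the relation $\cosh^2 x - \cosh^2 y \cdot e^{-2t}$-type identities (and their hyperbolic Pythagorean analogues $\cosh^2 - \sinh^2 = 1$) to collapse the verification to a short trigonometric-hyperbolic identity. The first three equations are naturally handled in the $(x,y)$ variables via \eqref{yPDE}, exploiting the uniform structure $f(x) = e^{-t} g(y)$ with $f,g \in \{\cosh,\sinh\}$; I would treat all three in parallel, since the computations differ only by signs coming from $\frac{d}{du}\cosh = \sinh$ versus $\frac{d}{du}\sinh = \cosh$. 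The fourth equation $\sin y = e^{-t}\sin x$ is the odd/trigonometric case obtained from Angenent's oval by the reflection-and-swap construction described just above the statement, and I would verify it the same way, checking in addition that $|y_x|<1$ so the curve is genuinely space-like on the stated time interval.

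Before starting I would also confirm the stated ranges of $t$: each solution should be checked to be space-like (i.e. $|y_x|<1$, or $\xi_\eta>0$ in the diagonal formulation) precisely on the indicated interval, which is where the candidate ceases to define an embedded space-like graph. This amounts to examining the sign of the relevant discriminant; for example $\cosh x = e^{-t}\cosh y$ forces $e^{-t}\le 1$ hence $t\ge 0$, with degeneracy at the boundary. The main obstacle I anticipate is purely bookkeeping: keeping the signs straight across the four cases and ensuring the eliminated-$t$ substitution is applied consistently, since a single sign error in $y_x$ propagates into $y_{xx}$ and breaks the identity. There is no conceptual difficulty—each verification reduces to an algebraic identity among $\cosh, \sinh$ (or $\sin,\cos$)—so the proof is a matter of organizing the four substitutions compactly, ideally by unifying the first three under a single two-parameter computation and treating the trigonometric case separately.
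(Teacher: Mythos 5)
Your proposal is correct, but it proves the theorem by a genuinely different route than the paper. The paper never substitutes the four equations into \eqref{yPDE}: it obtains them by transporting the known Euclidean exact solutions --- Angenent's oval $\cos x = e^t\cosh y$ and the Grim Reaper wave $\cos x = e^t\sinh y$ --- through Lemma \ref{transformers} (replace $x$ by $ix$ and $t$ by $-t$), using the odd-function variant with the complex time shift $t\mapsto -t+i\pi/2$ to get $\sinh x = e^{-t}\sinh y$, and a reflection across $y=x$ plus a $\pi/2$ translation to get $\sin y = e^{-t}\sin x$; later in the same section it recovers all four independently from the ansatz $k^2 = A(\theta)+B(t)$ in the curvature equation \eqref{kPDE2}. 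Your direct verification is more elementary and self-contained: it relies neither on the Euclidean results cited from the literature nor on Lemma \ref{transformers} (which the paper states without proof and whose analyticity/evenness hypotheses must be checked case by case), and your computations do close up exactly as you predict --- for $\cosh x = e^{-t}\cosh y$ one gets $y_x = \tanh x\,\coth y$ and $y_t=\coth y$ after eliminating $t$, and the PDE collapses to $\coth^2 y - \operatorname{csch}^2 y = 1$ together with $1-\operatorname{sech}^2x=\tanh^2x$; the other two hyperbolic cases give $y_x=\tanh x\tanh y$ and $y_x=\coth x\tanh y$ with the analogous identities, and the trigonometric case uses $1+\cot^2=\csc^2$. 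Your insistence on checking $|y_x|<1$ on the stated time intervals is a genuine point in favor of your write-up, since the theorem asserts the solutions are space-like and the paper leaves this implicit (e.g.\ for $t>0$ one has $\cosh y = e^t\cosh x>\cosh x$, hence $|\tanh y|>|\tanh x|$ and $|y_x|=|\tanh x\,\coth y|<1$). What the paper's route buys instead is provenance: it exhibits these curves as Minkowski counterparts of the known Euclidean solutions, which is the organizing theme of the section, whereas a bare substitution proves the statement but leaves the formulas unmotivated.
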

The curves appear in Figure \ref{flosi}.
Each of the first three solutions behaves like a self-similar solution near each of the two boundaries of the time interval where it is defined, serving as some sort of interpolation between them. This can be made precise by comparing the Taylor approximations and looking at Figure \ref{lengdarhasar}, where the Minkowski-lengths of the curves are shown as a function of $t$.

\begin{figure}[t]
	\centering
	\begin{subfigure}[t]{0.48\textwidth}
		\centering
		\includegraphics{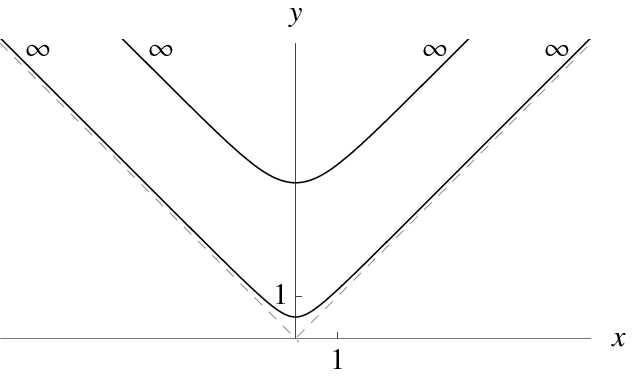}
		\caption{$\cosh x = e^{-t} \cosh y$ for $t=\frac 1 8, 3$.}
		\label{flosi1}
	\end{subfigure}
		\vspace{0.1in}
	\quad
	\begin{subfigure}[t]{0.48\textwidth}
		\centering
		\includegraphics{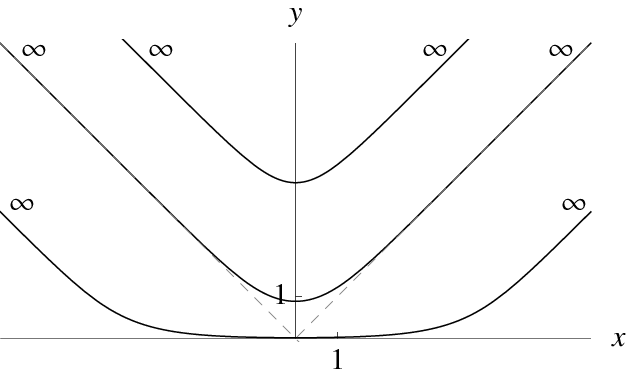}
		\caption{$\cosh x = e^{-t} \sinh y$ for $t=-4,0,3$.} 
		\label{flosi2}
	\end{subfigure}
	\begin{subfigure}[t]{0.48\textwidth}
		\centering
		\includegraphics{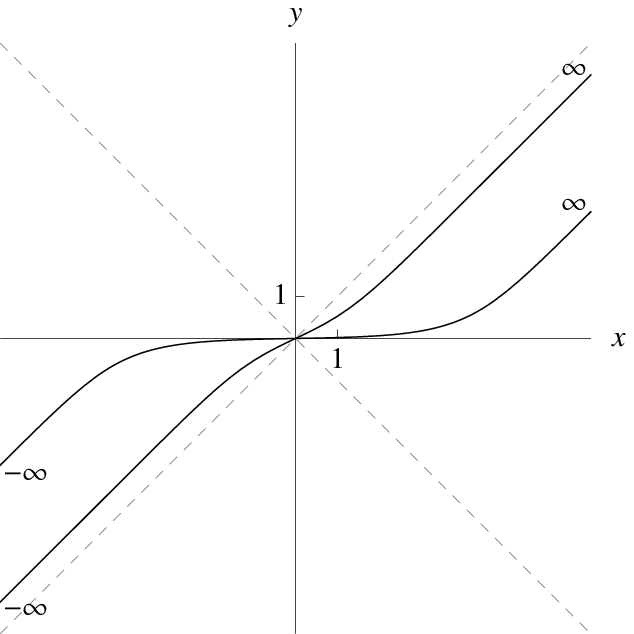}
		\caption{$\sinh x = e^{-t} \sinh y$ for $t=-4,-\frac 3 4$.}
		\label{flosi3}
	\end{subfigure}
	\quad
	\begin{subfigure}[t]{0.48\textwidth}
		\centering
		\includegraphics{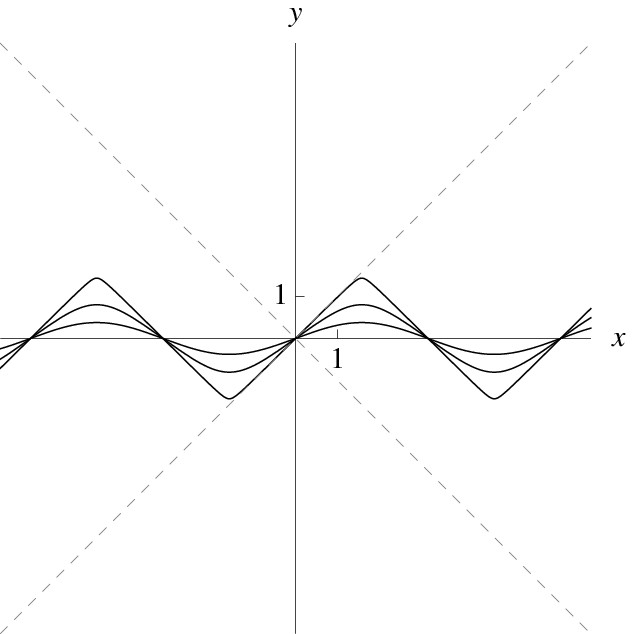}
		\caption{$\sin y = e^{-t} \sin x$ for $t=\frac{1}{100},\frac 1 3, 1$.}
	\end{subfigure}
	\caption{Exact non-self-similar solutions in the Minkowski plane.}
\label{flosi}
\end{figure}

The first solution consists of a curve in the upper half-plane (and its reflection across the $x$-axis) and it behaves like the expanding hyperbola $y^2=x^2+2t$ for $t$ close to $0$, and like the upward translating solution $\cosh x = e^{-t+y}$ for $t$ close to $\infty$. The Minkowski-length of the curve is finite and increases with $t$ from $0$ to $\pi$.

The second solution behaves like the right and leftward translating solutions $e^{\mp x} = e^{-t}\sinh y$ for $t$ close to $-\infty$, and like the upward translating solution $\cosh x = e^{-t+y}$ for $t$ close to $\infty$. The Minkowski-length of the curve is finite and decreases with $t$ down to the limit $\pi$.

The third solution behaves like the right and leftward translating solutions $\mp e^{\mp x} = e^{-t}\sinh y$ for $t$ close to $-\infty$, and converges to the $\xi$-axis as $t\rightarrow 0$, in a manner similar to the screw-contracting solution $x-y = -t\tanh \frac{x+y}{2}$.
The Minkowski-length of the curve is finite and decreases with $t$ down to $0$.

The fourth solution is in some sense the Minkowski analogue of the Euclidean Grim Reaper wave. It is periodic in $x$ with period $2\pi$ and comes out of the triangle wave at $t=0$, leaving each corner like the expanding hyperbola. As $t \rightarrow \infty$, it converges to the $x$-axis.\\

Another way to find exact solutions is to work with the PDE \eqref{kPDE2}, i.e., $k_t = k^2k_{\theta\theta} - k^3$. Note that if we impose the condition $k_\theta = 0$ or $k_t=0$, we get four familiar exact solutions:
\begin{itemize}
\item $k = \frac{1}{\sqrt{2t}}$, $t>0$: $X$ is the expanding hyperbola $y^2-x^2=2t$, $t>0$.
\item $k = \cosh \theta$: $X$ is the translating solution $\cosh x = e^{y-t}$, $t\in \mathbf R$.
\item $k = -\sinh \theta$, $\theta < 0$: $X$ is the translating solution $\sinh y = e^{t-x}$, $t\in \mathbf R$.
\item $k = e^{-\theta}$: $X$ is the translating solution $\xi = e^{\eta}+t$, $t\in \mathbf R$.
\end{itemize}

\begin{figure}[t]
	\centering
	\includegraphics{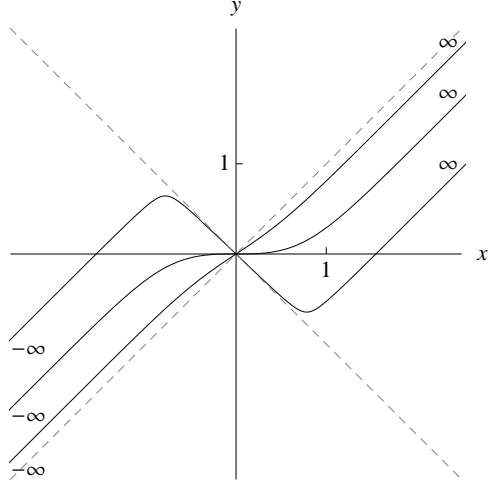}
	\caption{The exact non-self-similar solution $\tanh \xi = \tan \eta \tan 2t$ for $t=\frac{1}{100}, \frac{\pi}{8},\frac{\pi}{4}-\frac{1}{10}$.}
	\label{glaenyr}
\end{figure}

\begin{figure}[t]
	\centering
	\includegraphics{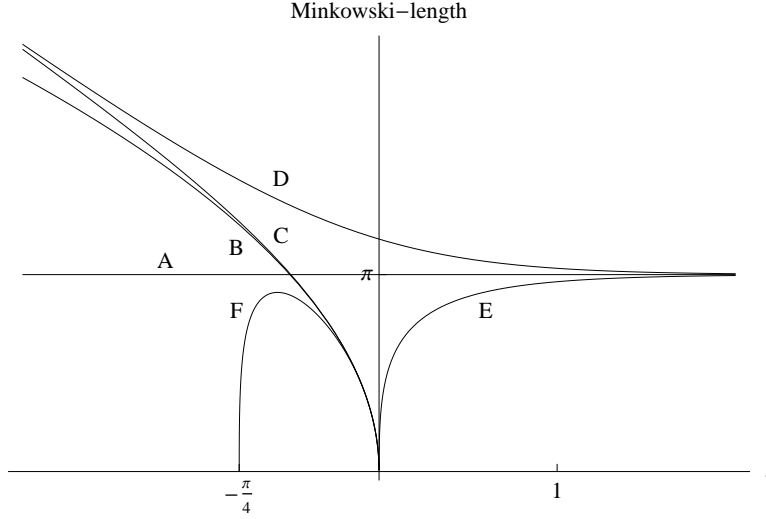}
	\caption{The Minkowski-lengths of all the finite length exact solutions as a function of $t$.  \textbf A: $\cosh x = e^{y-t}$. \textbf B: $\xi = -2t \tanh \eta$. \textbf C: $\sinh x = e^{-t} \sinh y$. \textbf D: $\cosh x = e^{-t} \sinh y$. \textbf E: $\cosh x = e^{-t} \cosh y$. \textbf F: $\tanh \xi = \tan \eta \tan (2t + \pi/2)$.}
	\label{lengdarhasar}
\end{figure}

Now consider solutions of the form $k(\theta,t)^2 = A(\theta) + B(t)$. With this substitution, the PDE takes the form
\begin{equation*}
A''(\theta)A(\theta) - \tfrac 1 2 A'(\theta)^2-2A^2(\theta) +\left(A''(\theta)-4A(\theta)\right)B(t) - 2B(t)^2-B'(t)=0.
\end{equation*}
In order for this to hold, we need $A''(\theta)-4A(\theta)$ to be constant, so (after possibly rescaling, reflecting and rotating the curve, and letting any additive constant be included in $B$) we have $5$ possibilities: $A(\theta)=\pm e^{-2\theta}$, $A(\theta) = \pm \cosh 2 \theta$, $A(\theta) = \sinh 2\theta$. Then $A''(\theta)-4A(\theta) = 0$ and $A''(\theta)A(\theta) - \tfrac 1 2 A'(\theta)^2-2A^2(\theta)$ takes on the constant values  $0,0,2,2,-2$, respectively. If we denote the constant by $2C$, we are left with the easily solvable ODE $B'(t) = 2(C-B(t)^2)$. Thus we get the following exact solutions, where in each case, $X$ is found by integration:
\begin{itemize}
\item $k^2 = e^{-2\theta} + \frac{1}{2t}$, $t<0$: $X$ is the solution $\xi = -2t \tanh \eta$, $t<0$.
\item $k^2 = e^{-2\theta} + \frac{1}{2t}$, $t>0$: $X$ is the solution $\xi = -2t \coth \eta$, $t>0$.
\item $k^2 = -e^{-2\theta} + \frac{1}{2t}$, $t >0$: $X$ is the solution $\xi = 2t \tan \eta$, $t >0$.
\item $  k^2 = \cosh 2\theta + \coth 2t$, $t<0$: $X$ is (after rescaling) the solution $\sinh x = e^{-t} \sinh y$, $t<0$.
\item $  k^2 = \cosh 2\theta + \coth 2t$, $t>0$: $X$ is (after rescaling) the solution  $\cosh x = e^{-t} \cosh y$, $t >0$.
\item $  k^2 = \cosh 2\theta + \tanh 2t$,  $t\in \mathbf R$: $X$ is (after rescaling) the solution $\cosh x = e^{-t} \sinh y$, $t \in \mathbf R$.
\item $  k^2 = -\cosh 2\theta + \coth 2t$, $t>0$: $X$ is (after rescaling) the solution $\sin y = e^{-t} \sin x$, $t>0$.
\item $  k^2 = \sinh 2\theta + \cot 2t$, $0<t<\frac{\pi}{2}$: $X$ is (after rescaling) the solution $\tanh \xi = \tan \eta \tan 2t$, $0<t<\frac{\pi}{4}$.
\end{itemize}

We recover all our previous exact solutions in addition to one new solution:

\begin{thm}
The following is a solution to the MCF in the Minkowski plane
\begin{equation*}
\tanh \xi = \tan \eta \tan 2t, \quad 0<t<\tfrac{\pi}{4}.
\end{equation*}
\end{thm}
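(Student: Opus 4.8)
The plan is to verify the claim directly by substituting the curve into the PDE \eqref{xiPDE}, namely $\xi_t = \xi_{\eta\eta}/\xi_\eta$, which governs the flow when $\xi$ is written as an increasing graph over the $\eta$-axis. The relation $\tanh\xi = \tan\eta\,\tan 2t$ defines $\xi$ implicitly as a function of $(\eta,t)$, so I would first differentiate $F(\xi,\eta,t) := \tanh\xi - \tan\eta\,\tan 2t = 0$ implicitly. Writing $\partial_\xi F = 1/\cosh^2\xi$, $\partial_\eta F = -\tan 2t\,\sec^2\eta$ and $\partial_t F = -2\tan\eta\,\sec^2 2t$, I obtain
\[
\xi_\eta = \tan 2t\,\sec^2\eta\,\cosh^2\xi, \qquad \xi_t = 2\tan\eta\,\sec^2 2t\,\cosh^2\xi.
\]

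Next I would compute $\xi_{\eta\eta}$ by differentiating $\xi_\eta$ in $\eta$, using $\frac{d}{d\eta}\cosh^2\xi = \sinh(2\xi)\,\xi_\eta$ and $\frac{d}{d\eta}\sec^2\eta = 2\sec^2\eta\,\tan\eta$. Dividing the result by $\xi_\eta$ and applying the identity $\sinh(2\xi)/\cosh^2\xi = 2\tanh\xi$ collapses the expression to
\[
\frac{\xi_{\eta\eta}}{\xi_\eta} = 2\tan\eta + 2\tanh\xi\,\xi_\eta.
\]
Substituting the formula for $\xi_\eta$ and replacing $\tanh\xi$ by $\tan\eta\,\tan 2t$ from the defining relation turns the right-hand side into $2\tan\eta + 2\tan\eta\,\tan^2 2t\,\sec^2\eta\,\cosh^2\xi$.

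The remaining step is to check that this equals $\xi_t$. After dividing by $2\tan\eta$ (the case $\eta=0$, where both sides vanish, being handled separately), the required identity is $1 + \tan^2 2t\,\sec^2\eta\,\cosh^2\xi = \sec^2 2t\,\cosh^2\xi$; using $\sec^2 = 1+\tan^2$ on both $2t$ and $\eta$, this rearranges to $\tanh^2\xi = \tan^2 2t\,\tan^2\eta$, which is exactly the square of the defining relation. This closes the verification. The only slightly delicate bookkeeping is the chain-rule computation of $\xi_{\eta\eta}$ through the implicit dependence of $\xi$ on $\eta$, which I expect to be the main (though still routine) obstacle.

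Finally I would record that \eqref{xiPDE} applies precisely when the curve is space-like, i.e.\ when $\xi_\eta>0$; since $\xi_\eta = \tan 2t\,\sec^2\eta\,\cosh^2\xi$, this holds exactly for $0<2t<\tfrac{\pi}{2}$, which explains the stated range $0<t<\tfrac{\pi}{4}$. As an alternative route, one could instead integrate the curvature solution $k^2 = \sinh 2\theta + \cot 2t$, which solves \eqref{kPDE2} by the computation preceding the theorem, but the direct substitution above is the most economical way to confirm the claim.
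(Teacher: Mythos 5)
Your verification is correct, but it runs along a genuinely different route from the paper's. The paper does not verify the formula directly: it \emph{derives} the solution from the curvature PDE \eqref{kPDE2} via the separable ansatz $k(\theta,t)^2 = A(\theta)+B(t)$, which forces $A''-4A$ to be constant; the case $A(\theta)=\sinh 2\theta$ gives $k^2 = \sinh 2\theta + \cot 2t$, and the curve $X$ is then recovered by integration (and a rescaling that converts the time interval $0<t<\tfrac{\pi}{2}$ into $0<t<\tfrac{\pi}{4}$) — that last integration step is left implicit in the paper. Your approach instead takes the stated implicit relation as given and checks it against the graph PDE \eqref{xiPDE} by implicit differentiation; your formulas for $\xi_\eta$, $\xi_t$, the collapse $\xi_{\eta\eta}/\xi_\eta = 2\tan\eta + 2\tanh\xi\,\xi_\eta$, and the final reduction to $\tanh^2\xi = \tan^2 2t\,\tan^2\eta$ are all correct, and your space-likeness observation $\xi_\eta>0 \iff \tan 2t>0$ correctly accounts for the time range $0<t<\tfrac{\pi}{4}$. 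What each buys: your computation is self-contained and actually more complete as a proof of the stated theorem, since it confirms the exact formula without the unverified integration step; the paper's ansatz method, by contrast, explains where the solution comes from, produces it as one member of a systematic family of eight separable solutions, and connects it to the Euclidean exact solutions via Lemma \ref{transformers2}. Your closing remark correctly identifies the paper's route as the alternative; you have essentially supplied the verification the paper omits.
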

The curve appears in Figure \ref{glaenyr}. As $t \rightarrow 0$, it comes out of  the broken line $\{\xi \leq 0, \eta = -\frac \pi 2\}\cup\{\xi = 0, -\frac \pi 2 \leq \eta \leq \frac \pi 2\}\cup\{\xi \geq 0, \eta = \frac \pi 2\}$, behaving like the solution $\xi = 2t \tan \eta$ as $t \rightarrow 0+$. As $t \rightarrow \frac \pi 4$, it converges to the $\xi$-axis, behaving like the solution $\eta = -2t \tanh \xi$ as $t \rightarrow 0-$. So this new solution serves as some sort of interpolation between these two exact self-similar solutions. Moreover, it's Minkowski-length is finite, increases from $0$ to a maximum value and then decreases to $0$ again.

Note that we have a total of twelve exact solutions in the Minkowski plane, compared to only four in the Euclidean plane (known to the author). The curvature of the Euclidean solutions is as follows:
\begin{itemize}
\item $k = \frac{1}{\sqrt{-2t}}$, $t<0$: $X$ is the contracting circle $x^2+y^2=-2t$, $t<0$.
\item $k = \cos \theta$: $X$ is the translating Grim Reaper $\cos x = e^{t-y}$, $t\in \mathbf R$.
\item $ k^2 = \cos 2\theta - \coth 2t$, $t<0$: $X$ is (after rescaling) Angenent's oval $\cos x = e^t \cosh y$, $t < 0$.
\item $k^2 = \cos 2\theta - \tanh 2t $, $t\in \mathbf R$: $X$ is (after rescaling) the Grim Reaper wave $\cos x = e^t \sinh y$, $t\in \mathbf R$.
\end{itemize}
In particular, we see that many of the exact Minkowski solutions above correspond to these four exact Euclidean solutions under the transformation from Lemma \ref{transformers2}, which would have been another way of deriving them.

\section*{Appendix}
\label{appendix}

In this Appendix we find all space-like curves in the Minkowski plane which are invariant under some self-similar motion. 

As a warm up, we solve the same task for curves in the Euclidean plane. Let $X:I \rightarrow \mathbf R^{2}$ be a curve in the Euclidean plane. A self-similar motion of $X$ is a map $\hat X: I \times J \rightarrow \mathbf R^{2}$ of the form
\begin{equation*}
\label{hreyfingE}
\hat X(u,t) = g(t)e^{if(t)}X(u) + H(t).
\end{equation*}
Here $J$ is an interval containing $0$ and $f,g: J\rightarrow \mathbf R$ and $H: J \rightarrow \mathbf R^{2}$ are differentiable functions s.t.\ $f(0) = 0$, $g(0) = 1$ and $H(0) = 0$, and hence $\hat X(u,0) = X(u)$. The function $f$ determines the rotation, $g$ determines the dilation and $H$ is the translation term.

The self-similar motion leaves the curve invariant if and only if
\begin{equation*}
\left\langle \frac{\partial \hat X}{\partial t}(u,t),N(u,t) \right\rangle = 0,
\end{equation*}
or equivalently
\begin{equation*}
\label{FullJafnaE}
\begin{aligned}
g(t)f'(t)\langle X(u),T(u) \rangle&+ g'(t)\langle X(u),N(u)\rangle   \\
&+ \langle e^{-if(t)}H'(t),N(u)\rangle = 0,
\end{aligned}
\end{equation*}
for all $u \in I$, $t  \in J$. Here $T=X_s$ is the unit tangent ($s$ is the Euclidean arc-length) and $N = iT$ is the leftward pointing unit normal. By looking at this equation at time $t=0$, we see that $X$ has to satisfy
\begin{equation}
\label{AdalJafnaE}
a\langle X,T \rangle + b \langle X,N \rangle +\langle C,N \rangle = 0,
\end{equation}
where $a=f'(0)$, $b = g'(0)$ and $C = H'(0)$. Satisfying an equation of this form is also a sufficient condition for $X$ to be invariant under a self-similar motion. To see that, we look at three cases.

When $(a,b)=(0,0)$ and $C\neq 0$, Equation \eqref{AdalJafnaE} reduces to $\langle C,N \rangle = 0$, so $X$ is any straight line with direction vector $C$, which is of course invariant under any translation in direction $C$.

When $(a,b)\neq (0,0)$ and $C=0$, we introduce the functions $\tau = \langle X, T \rangle$ and $\nu = \langle X, N \rangle$. Much like their Minkowski counterparts, they always satisfy
\begin{equation*}
\label{TauNuDiffurE}
\begin{aligned}
 \tau_s &= 1 + k\nu \\
\nu_s &= -k\tau,
\end{aligned}
\end{equation*}
where $k = \langle T_s,N \rangle$ is the signed curvature. Here  \eqref{AdalJafnaE} becomes $a\tau + b\nu = 0$. The case $b=0$ yields $\tau = 0$, so $X$ is a circle with center at the origin, which is of course invariant under any rotation around the origin. Now assume $b \neq 0$. Then we note that
$\frac{d}{ds}(b\tau-a\nu) = b$ so $b\tau-a\nu = bs$ and hence $\tau = \frac{b^2}{a^2+b^2}s$, $\nu = -\frac{ab}{a^2+b^2}s$ and $k=\frac{a}{bs}$. Therefore, the curve is given by the formula
\begin{equation*}
X = (\tau + i \nu)e^{i\int kds} = \frac{b}{b+ia}s^{\frac{b+ia}{b}}, \quad s>0.
\end{equation*}
Curves of this form are known in the literature as \emph{logarithmic spirals}  (Figure \ref{logspiral}) and they remain invariant under any combination of rotation and dilation such that $bg(t)f'(t) = ag'(t)$, i.e., $e^{bf(t)} = g(t)^a$. When $a=0$, the spiral reduces to a straight line through origin, which is invariant under any scaling.

\begin{figure}
	\centering
	\begin{subfigure}[t]{0.48\textwidth}
		\centering
		\includegraphics{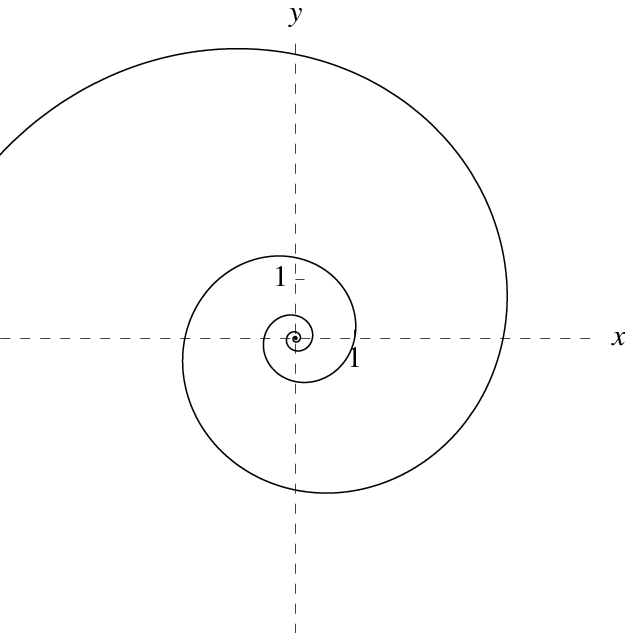}
		\caption{$\frac a b = 5$.}
		\label{logspiral1}
	\end{subfigure}
	\quad
	\begin{subfigure}[t]{0.48\textwidth}
		\centering
		\includegraphics{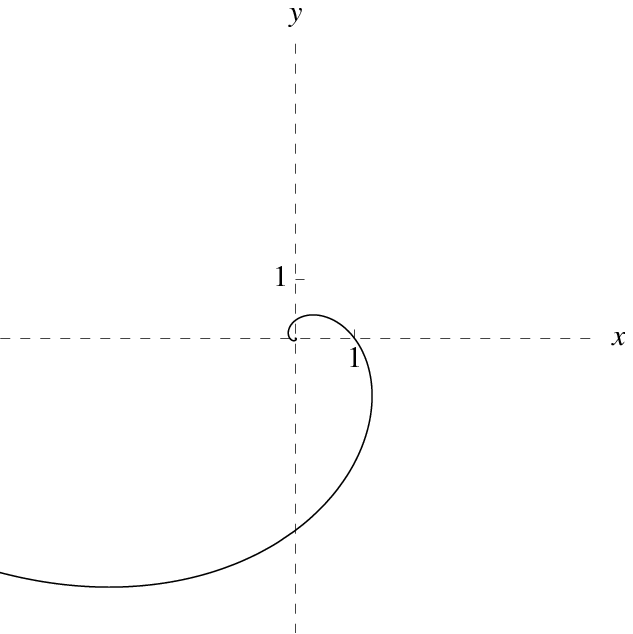}
		\caption{$\frac a b = - \frac 4 3$.}
		\label{logspiral2}
	\end{subfigure}
	\caption{Logarithmic spirals in the Euclidean plane.}
	\label{logspiral}
\end{figure}

When $(a,b)\neq (0,0)$ and $C\neq 0$, we can get rid of the translation term $C$ from \eqref{AdalJafnaE} by translating $X$ by the fixed vector $\frac{C}{b+ia}$. Then we are back in the case above, so $X$ is a logarithmic spiral (circle/line) with center at $-\frac{C}{b+ia}$. 

So we have recovered the following well-known result.

\begin{thm} The only curves in the Euclidean plane that are invariant under a self-similar motion are
\begin{itemize}
\item lines, invariant under translations along their direction vector and dilations about a point on the line,
\item circles, invariant under rotations around their midpoint,
\item logarithmic spirals, $X = \frac{1}{1+i\alpha}s^{1+i\alpha}$, $s>0$, invariant under any combination of rotation and dilation such that $e^{f(t)} = g(t)^\alpha$. 
\end{itemize}
\end{thm}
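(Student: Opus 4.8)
The plan is to read the classification directly off the algebraic invariance condition, organizing everything by the initial-velocity triple $(a,b,C) = (f'(0),g'(0),H'(0))$. The necessary half is already in hand: evaluating the invariance equation at $t=0$ forces any invariant curve to satisfy \eqref{AdalJafnaE}, i.e. $a\langle X,T\rangle + b\langle X,N\rangle + \langle C,N\rangle = 0$. For the converse I would argue sufficiency by exhibiting the motion: given a curve obeying \eqref{AdalJafnaE} for some nonzero triple, one integrates differential equations for $f$, $g$, $H$ making the three coefficients $g f'$, $g'$, and $e^{-if}H'$ proportional to $a$, $b$, $C$ (this is the zero-curvature analogue of the construction used for self-similar MCF solutions, leading to the relation $e^{bf(t)} = g(t)^a$). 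Thus the whole problem collapses to solving \eqref{AdalJafnaE} in the three exhaustive cases determined by whether $(a,b)$ vanishes and whether $C$ vanishes.

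I would clear the two degenerate cases first. If $(a,b)=(0,0)$ and $C\neq0$, then \eqref{AdalJafnaE} reduces to $\langle C,N\rangle = 0$, so $N$ (hence $T$) is constant and $X$ is a straight line in the direction of $C$, invariant under translation along $C$. If $(a,b)\neq(0,0)$ and $C=0$, I introduce $\tau = \langle X,T\rangle$, $\nu = \langle X,N\rangle$ and use their Frenet evolution \eqref{TauNuDiffurE}; now \eqref{AdalJafnaE} becomes the linear relation $a\tau + b\nu = 0$. The subcase $b=0$ gives $\tau=0$, meaning $X\perp X_s$, so $|X|$ is constant and $X$ is a circle centered at the origin.

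The genuine content lies in the remaining subcase $b\neq0$, which produces the logarithmic spirals, and I expect its reconstruction step to be the main obstacle. The key observation is that the combination $b\tau - a\nu$ linearizes the system: differentiating and substituting \eqref{TauNuDiffurE} together with $a\tau + b\nu = 0$ yields $\tfrac{d}{ds}(b\tau - a\nu) = b$, hence $b\tau - a\nu = bs$ after normalizing the arc-length origin. Solving the two linear equations gives $\tau = \tfrac{b^2}{a^2+b^2}s$ and $\nu = -\tfrac{ab}{a^2+b^2}s$, and then $\nu_s = -k\tau$ forces $k = \tfrac{a}{bs}$. Integrating the curvature recovers the tangent angle $\int k\,ds = \tfrac{a}{b}\log s$, and assembling $X = (\tau + i\nu)e^{i\int k\,ds}$ collapses to the closed form $X = \tfrac{b}{b+ia}\,s^{(b+ia)/b}$; the difficulty here is purely in recognizing the linearizing quantity and then piecing $X$ back together from $(\tau,\nu)$ and the integrated curvature, since after that everything is elementary. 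Writing $\alpha = a/b$ puts this in the stated normalized form $X = \tfrac{1}{1+i\alpha}s^{1+i\alpha}$.

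Finally I would reduce the general case $(a,b)\neq(0,0)$, $C\neq0$ to the previous one. Because $\mathbf C$ is a field, $b+ia$ is invertible, so translating $X$ by the fixed vector $\tfrac{C}{b+ia}$ removes the $\langle C,N\rangle$ term from \eqref{AdalJafnaE} and returns a spiral (or circle, or line) now centered at $-\tfrac{C}{b+ia}$. Collecting the three cases gives exactly the asserted list. I would close by noting that it is precisely this absorption of the translation term that keeps the Euclidean classification short, and that its failure in the Minkowski plane when $a^2=b^2$ (where $b+ha$ is a zero divisor) is what forces the longer trichotomy there.
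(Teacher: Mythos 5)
Your proposal is correct and follows essentially the same route as the paper's own proof: the same three-case split on $(a,b)$ and $C$, the same linearizing quantity $b\tau - a\nu$ with $\tfrac{d}{ds}(b\tau - a\nu) = b$ yielding $k = \tfrac{a}{bs}$ and $X = \tfrac{b}{b+ia}s^{(b+ia)/b}$, and the same absorption of $C$ by translating by $\tfrac{C}{b+ia}$. Nothing essential differs, so there is no gap to report.
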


Now we move to the Minkowski plane. Let $X:I \rightarrow \mathbf R^{1,1}$ be a space-like curve in the Minkowski plane. The self-similar motion given by \eqref{hreyfing} leaves the curve invariant if and only if
\begin{equation*}
\left\langle \frac{\partial \hat X}{\partial t}(u,t),N(u,t) \right\rangle = 0,
\end{equation*}
or equivalently,
\begin{equation*}
\label{FullJafnaM}
\begin{aligned}
g(t)f'(t)\langle X(u),T(u) \rangle&- g'(t)\langle X(u),N(u)\rangle   \\
&- \langle e^{-hf(t)}H'(t),N(u)\rangle = 0,
\end{aligned}
\end{equation*}
which at time $t=0$ results in
\begin{equation}
\label{AdalJafnaM}
a\langle X,T \rangle - b \langle X,N \rangle - \langle C,N \rangle = 0.
\end{equation}
Satisfying an equation of this form is also a sufficient condition for $X$ to be invariant under a self-similar motion. As before, we look at three cases.

When $(a,b)=(0,0)$ and $C\neq 0$, we have $\langle C,N \rangle = 0$ so $X$ is any straight line with direction vector $C$, which is of course invariant under any translation in direction $C$.

Assume $(a,b)\neq(0,0)$ and $C=0$. Then Equation \eqref{AdalJafnaM} reduces to $a\tau - b\nu = 0$. When $b=0$, we have $\tau = 0$ so $X$ is a hyperbola (asymptotic to the light-like axes), which is invariant under any hyperbolic rotation. When $a=0$, we have $\nu = 0$ so $X$ is a line through the origin, which is invariant under any scaling. In the general case, note that $\frac{d}{ds}(b\tau-a\nu) = b$ so $b\tau - a\nu = bs$. This has no solutions when $a^2=b^2$, but when $a^2\neq b^2$ we get $\tau = \frac{b^2}{b^2-a^2}s$, $\nu = \frac{ab}{b^2-a^2}s$ and $k = \frac{a}{bs}$. Therefore, the curve is given by the formula
\begin{equation*}
X = (\tau-h\nu)e^{\int k ds} = \frac{b}{b+ha}s^{\frac{b+ha}{b}}, \quad s>0.
\end{equation*}
In the diagonal basis, it takes the form
\begin{equation*}
\xi = \frac{b}{b+a}s^{\frac{b+a}{b}}, \quad \eta = \frac{b}{b-a}s^{\frac{b-a}{b}}, \quad s>0.
\end{equation*}
These Minkowski analogues of logarithmic spirals are invariant under any combination of hyperbolic rotation and dilation such that $bg(t)f'(t) = ag'(t)$, i.e., $e^{bf(t)} = g(t)^a$. They show different behavior depending on whether $a^2<b^2$ or $a^2>b^2$, as can be seen in Figure \ref{snudur}.

\begin{figure}
	\centering
	\begin{subfigure}{0.48\textwidth}
		\centering
		\includegraphics{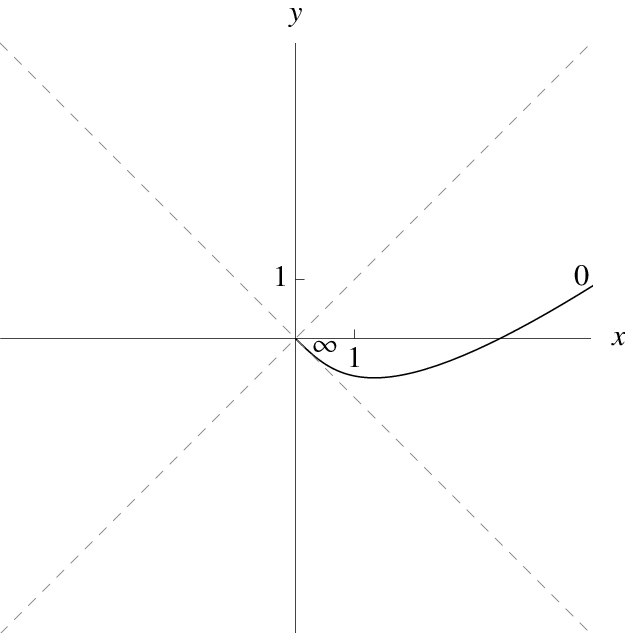}
		\caption{$\frac a b = \frac 1 2$.}
	\end{subfigure}
	\quad
	\begin{subfigure}{0.48\textwidth}
		\centering
		\includegraphics{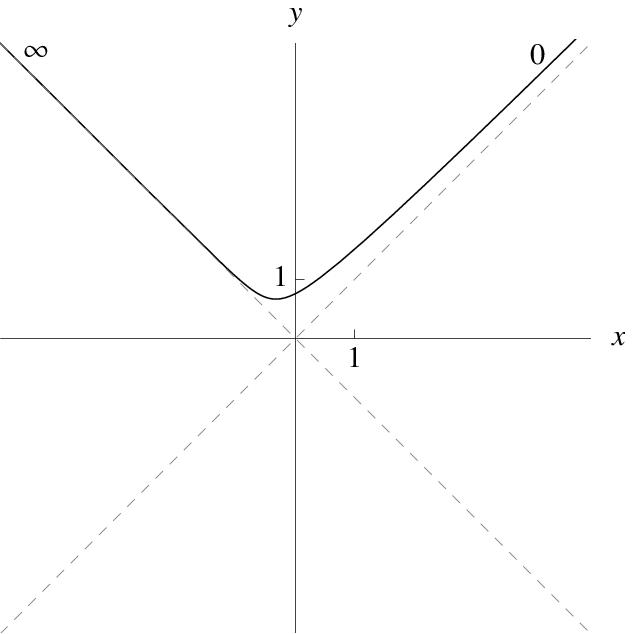}
		\caption{$\frac a b = 2$.}
	\end{subfigure}

	\caption{Minkowski analogues of logarithmic spirals.}
	\label{snudur}
\end{figure}

Assume $(a,b)\neq(0,0)$ and $C\neq0$. When $a^2 \neq b^2$, we can get rid of the translation term $C$ from Equation \eqref{AdalJafnaM} by translating the curve $X$ by the fixed vector $\frac{C}{b+ha}$. Then we are back to the case above, so $X$ is one of those curves centered at $-\frac{C}{b+ha}$. We are left with the case $a^2=b^2$. By reflecting across the $y$-axis if necessary, we may assume $a=b$.  By translating $X$ along the $\xi$-axis, we can cancel out the $\xi$-component of $C$, so we may assume $C$ only has a $\eta$-component, i.e., $C = (0,c)$. By applying a hyperbolic rotation to $X$ and reflecting across the origin if necessary, we can assume $c=b$. Then Equation \eqref{AdalJafnaM} takes the form $\tau - \nu - \langle (0,1), N \rangle = 0$. The equivalent  ODE for $\xi$ as a function of $\eta$ is $\xi'(\eta) = 2\xi (\eta)$, whose solution is $\xi = e^{2\eta}$. This curve is invariant under any combination of hyperbolic rotation, dilation and translation such that
$g(t)=e^{f(t)}$ and $H(t) = (0,f(t))$. Here the screw-dilation factor has the form $g(t)e^{hf(t)} = (e^{2f(t)},1)$, so it is just a dilation in the $\xi$-variable.

So we have proved the following.

\begin{thm}
\label{Minvariant}
 The only curves in the Minkowski plane that are invariant under a self-similar motion are
\begin{itemize}
\item lines, invariant under translations along their direction vector and dilations about a point on the line,
\item hyperbolas with light-like asymptotes, invariant under hyperbolic rotations around their midpoint,
\item Minkowski analogues of logarithmic spirals, $X = \frac{1}{1+h\alpha}s^{1+h\alpha} = \left(\frac{s^{1+\alpha}}{1+\alpha},\frac{s^{1-\alpha}}{1-\alpha}\right)$, $s>0$, invariant under any combination of hyperbolic rotation and dilation such that $e^{f(t)} = g(t)^\alpha$,
\item the curve $\xi = e^{2\eta}$, invariant under any combination of hyperbolic rotation, dilation and translation such that $g(t)=e^{f(t)}$ and $H(t) = (0,f(t))$.
\end{itemize}
\end{thm}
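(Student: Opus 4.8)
The plan is to take the invariance equation \eqref{AdalJafnaM}, namely $a\langle X,T\rangle - b\langle X,N\rangle - \langle C,N\rangle = 0$ with $a=f'(0)$, $b=g'(0)$, $C=H'(0)$, as the starting point. Its necessity is immediate from evaluating the full time-dependent invariance identity at $t=0$, so the real content is (i) sufficiency, which I would establish by exhibiting in each case explicit functions $f,g,H$ for which the identity holds for all $t$, and (ii) solving \eqref{AdalJafnaM} for $X$. The sufficiency construction runs exactly parallel to the one for the flow in Section \ref{selfsim}, the only change being that the right-hand side is $0$ rather than $k$; with it in hand, the classification reduces to a case analysis organized by the triple $(a,b,C)$.

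The core computation uses $\tau=\langle X,T\rangle$ and $\nu=\langle X,N\rangle$, together with the relations $\tau_s=1+k\nu$, $\nu_s=k\tau$ from \eqref{TauNuDiffur} and the reconstruction formula $X=(\tau-h\nu)e^{h\theta}$ from \eqref{Xjafna}. When $(a,b)=(0,0)$ and $C\neq 0$, the equation is $\langle C,N\rangle=0$; since $\langle N,N\rangle=-1$ this forces the (constant) vector $C$ to be parallel to $T$, so $X$ is a straight line. When $(a,b)\neq(0,0)$ and $C=0$, the equation reads $a\tau-b\nu=0$, and the degenerate choices $b=0$ and $a=0$ give $\tau\equiv 0$ (a hyperbola with light-like asymptotes) and $\nu\equiv 0$ (a line through the origin). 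For the generic subcase I would differentiate: using \eqref{TauNuDiffur} one gets $\frac{d}{ds}(b\tau-a\nu)=b$, hence $b\tau-a\nu=bs$, and solving this together with $a\tau-b\nu=0$ yields $\tau,\nu$ and $k=\tfrac{a}{bs}$ explicitly when $a^2\neq b^2$; integrating via \eqref{Xjafna} then produces the Minkowski logarithmic spiral $X=\tfrac{b}{b+ha}\,s^{(b+ha)/b}$.

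The decisive structural point, and the main obstacle, is the dichotomy $a^2\neq b^2$ versus $a^2=b^2$, governed by whether $b+ha$ is invertible or a zero divisor in the hyperbolic numbers. In the generic case $a^2\neq b^2$ the linear system above is nondegenerate, and moreover a nonzero $C$ can be absorbed by translating $X$ by the fixed vector $\tfrac{C}{b+ha}$, reducing to the $C=0$ analysis and recentering the curve at $-\tfrac{C}{b+ha}$; this disposes of all curves of the first three types. When $a^2=b^2$ the same system has no solution (for $C=0$ it degenerates to the contradiction $0=bs$) and $b+ha$ cannot be inverted, so the translation genuinely cannot be removed. Here I would normalize to $a=b$ by a reflection, cancel the $\xi$-component of $C$ by a translation along the $\xi$-axis, and rotate and rescale so that $C=(0,b)$; the invariance equation then becomes, in the graph parametrization $X(\eta)=(\xi(\eta),\eta)$ of \eqref{xijofnur}, the first-order ODE $\xi'(\eta)=2\xi(\eta)$, whose solution $\xi=e^{2\eta}$ is the fourth and final curve. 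Collecting the four cases, and noting the explicit invariance relations $e^{bf(t)}=g(t)^{a}$ and $H(t)=(0,f(t))$ recorded along the way, gives the stated list.
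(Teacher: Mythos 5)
Your proposal is correct and takes essentially the same route as the paper's Appendix: the same reduction to the time-zero equation \eqref{AdalJafnaM}, the same $(\tau,\nu)$ computation giving $b\tau-a\nu=bs$ and hence the spiral $X=\tfrac{b}{b+ha}s^{(b+ha)/b}$, the same dichotomy according to whether $b+ha$ is invertible or a zero divisor, and the same normalizations reducing the exceptional case $a^2=b^2$ to the ODE $\xi'(\eta)=2\xi(\eta)$ with solution $\xi=e^{2\eta}$. I see no gaps.
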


\section*{Acknowledgements}
I would like to thank my friend Eric Marberg for reading over the draft and providing valuable feedback. I would also like to thank my advisor, Tobias Colding, for guidance and support.

\bibliographystyle{amsplain}
\bibliography{SSMinkowskiMai}

\providecommand{\bysame}{\leavevmode\hbox to3em{\hrulefill}\thinspace}
\providecommand{\MR}{\relax\ifhmode\unskip\space\fi MR }
\providecommand{\MRhref}[2]{%
  \href{http://www.ams.org/mathscinet-getitem?mr=#1}{#2}
}
\providecommand{\href}[2]{#2}
\begin{thebibliography}{10}

\bibitem{aar06}
Mark A.~S. Aarons, \emph{Mean curvature flow with a forcing term in {M}inkowski
  space}, Calc. Var. Partial Differential Equations \textbf{25} (2006), no.~2,
  205--246. \MR{2188747 (2006h:53057)}

\bibitem{ang92}
Sigurd~B. Angenent, \emph{Shrinking doughnuts}, Nonlinear diffusion equations
  and their equilibrium states, 3 ({G}regynog, 1989), Progr. Nonlinear
  Differential Equations Appl., vol.~7, Birkh\"auser Boston, Boston, MA, 1992,
  pp.~21--38. \MR{1167827 (93d:58032)}

\bibitem{broadvass11}
Philip Broadbridge and Peter Vassiliou, \emph{The role of symmetry and
  separation in surface evolution and curve shortening}, SIGMA Symmetry
  Integrability Geom. Methods Appl. \textbf{7} (2011), Paper 052, 19.
  \MR{2804584 (2012f:53134)}

\bibitem{cat11}
Francesco Catoni, Dino Boccaletti, Roberto Cannata, Vincenzo Catoni, and Paolo
  Zampetti, \emph{Geometry of {M}inkowski space-time}, Springer Briefs in
  Physics, Springer, Heidelberg, 2011. \MR{2797703 (2011m:83001)}

\bibitem{ding12}
Qi~Ding, \emph{Entire spacelike translating solitons in {M}inkowski space},
  preprint (2012), arXiv:1204.1826.

\bibitem{eck93}
Klaus Ecker, \emph{On mean curvature flow of spacelike hypersurfaces in
  asymptotically flat spacetimes}, J. Austral. Math. Soc. Ser. A \textbf{55}
  (1993), no.~1, 41--59. \MR{1231693 (94f:35139)}

\bibitem{eck97}
\bysame, \emph{Interior estimates and longtime solutions for mean curvature
  flow of noncompact spacelike hypersurfaces in {M}inkowski space}, J.
  Differential Geom. \textbf{46} (1997), no.~3, 481--498. \MR{1484889
  (98j:58034)}

\bibitem{eck03}
\bysame, \emph{Mean curvature flow of spacelike hypersurfaces near null initial
  data}, Comm. Anal. Geom. \textbf{11} (2003), no.~2, 181--205. \MR{2014875
  (2004k:53098)}

\bibitem{eckhuisk91}
Klaus Ecker and Gerhard Huisken, \emph{Parabolic methods for the construction
  of spacelike slices of prescribed mean curvature in cosmological spacetimes},
  Comm. Math. Phys. \textbf{135} (1991), no.~3, 595--613. \MR{1091580
  (92g:53058)}

\bibitem{gagham86}
M.~Gage and R.~S. Hamilton, \emph{The heat equation shrinking convex plane
  curves}, J. Differential Geom. \textbf{23} (1986), no.~1, 69--96. \MR{840401
  (87m:53003)}

\bibitem{hph1}
Hoeskuldur~P. Halldorsson, \emph{Self-similar solutions to the curve shortening
  flow}, Trans. Amer. Math. Soc. \textbf{364} (2012), no.~10, 5285--5309.
  \MR{2931330}

\bibitem{hph2}
\bysame, \emph{Helicoidal surfaces rotating/translating under the mean
  curvature flow}, Geom. Dedicata \textbf{162} (2013), 45--65. \MR{3009534}

\bibitem{huiskyau96}
Gerhard Huisken and Shing-Tung Yau, \emph{Definition of center of mass for
  isolated physical systems and unique foliations by stable spheres with
  constant mean curvature}, Invent. Math. \textbf{124} (1996), no.~1-3,
  281--311. \MR{1369419 (96m:53037)}

\bibitem{jian06}
Huai-Yu Jian, \emph{Translating solitons of mean curvature flow of noncompact
  spacelike hypersurfaces in {M}inkowski space}, J. Differential Equations
  \textbf{220} (2006), no.~1, 147--162. \MR{2182083 (2006i:53095)}

\bibitem{jianjuliu10}
Huaiyu Jian, Hongjie Ju, Yannan Liu, and Wei Sun, \emph{Symmetry of translating
  solutions to mean curvature flows}, Acta Math. Sci. Ser. B Engl. Ed.
  \textbf{30} (2010), no.~6, 2006--2016. \MR{2778706 (2012a:35122)}

\bibitem{king00}
J.~R. King, \emph{Emerging areas of mathematical modelling}, R. Soc. Lond.
  Philos. Trans. Ser. A Math. Phys. Eng. Sci. \textbf{358} (2000), no.~1765,
  3--19, Science into the next millennium: young scientists give their visions
  of the future, Part II. \MR{1759647 (2001a:00012)}

\bibitem{sobcz95}
Garret Sobczyk, \emph{The hyperbolic number plane}, College Math. J.
  \textbf{26} (1995), no.~4, 268--280.

\end{thebibliography}

\end{document}